\providecommand{\highlight}[1]{{\color{blue}#1}}
         \providecommand{\changes}[1]{
           \ifthenelse{\boolean{showchanges}}{{\highlight{#1}}}{#1}
         }
         \providecommand{\changefromto}[3][replace with]{
           \ifthenelse{\boolean{showchanges}}
           {{\sout{#2}\margnote{#1}}{\highlight{#3}}}
           {#3\xspace}
         }
         \providecommand{\ChangePar}[2]{
           \ifthenelse{\boolean{showchanges}}
           {{\par$\mapsfrom$ \textcolor{red!20}{#1}}{\par$\mapsto$ \textcolor{blue}{#2}}}
           {\par #2}
         }
         \providecommand{\InsertPar}[1]{
           \ifthenelse{\boolean{showchanges}}
           {{\par$\mapsto$ \textcolor{blue}{#1}}}
           {\par #1}
         }
\newcommand{\delete}[1]{
  \ifthenelse{\boolean{showdelete}} {{\color{red}{#1}}}{}
}
\newcommand{\pdt}{\ensuremath{\partial_t}}
\newcommand{\pdtau}{\ensuremath{\bar{\partial}_\tau}}
\renewcommand{\O}{\ensuremath{{\Omega}}}
\newcommand{\G}{\ensuremath{{\Gamma}}}
\newcommand{\IR}{\mathbb R}
\newcommand{\ve}{\varepsilon}
\newcommand{\T}{\mathcal{T}}
\newcommand{\Lp}[1]{\ensuremath{{L}^{#1}}}
\newcommand{\Hil}[1]{\ensuremath{{H}^{#1}}}
\newcommand{\Ltwop}[3]{\ensuremath{\left\langle#1,#2\right\rangle}_{#3}}
\newcommand{\Th}{\ensuremath{{\mathcal{S}_{h,\O}}}}
\newcommand{\Shi}{\ensuremath{{\mathcal{S}_{h,i}}}}
\newcommand{\She}{\ensuremath{{\mathcal{S}_{h,e}}}}
\newcommand{\Vho}{\ensuremath{{\mathbb{V}_{h,\Omega}}}}
\newcommand{\vhee}{\ensuremath{{\mathbb{V}_{h,e}}}}
\newcommand{\vhi}{\ensuremath{{\mathbb{V}_{h,i}}}}
\newcommand{\vhe}{\ensuremath{{\mathbb{V}_{h,e}^\#}}}
\newcommand{\vhg}{\ensuremath{{\mathbb{V}_{h,\G}}}}
\newcommand{\Vhee}{\ensuremath{{\mathbb{W}_{h,e}}}}
\newcommand{\Vhi}{\ensuremath{{\mathbb{W}_{h,i}}}}
\newcommand{\Vhe}{\ensuremath{{\mathbb{W}_{h,e}^\#}}}
\newcommand{\Vhg}{\ensuremath{{\mathbb{W}_{h,\G}}}}
\newcommand{\myall}{\ensuremath{ \mbox{ for all }}}
\newcommand{\la}{\langle}
\newcommand{\ra}{\rangle}
\crefname{hypothesis}{Hypothesis}{Hypotheses}
\title{Multiscale analysis and simulation of a signalling process with surface diffusion \thanks{Submitted to the editors -.
\funding{CV wishes to acknowledge the kind hospitality of the Hausdorff Institute for Mathematics in Bonn during the trimester program on multiscale problems in 2017.}}}
\author{Mariya Ptashnyk\thanks{Department of Mathematics, Heriot-Watt University, Edinburgh, UK 
  (\email{m.ptashnyk@hw.ac.uk}).}
\and Chandrasekhar Venkataraman\thanks{School of Mathematical and Physical Sciences, University of Sussex, UK
  (\email{cv42@sussex.ac.uk}).}
}
\begin{document}

\maketitle

\begin{abstract}
We present and analyse a model for cell signalling processes in biological tissues. The model includes  diffusion and nonlinear reactions on the cell surfaces, and both inter- and intracellular  signalling.
Using techniques from the theory of two-scale convergence as well the unfolding method,  we show convergence of the solutions to the model to solutions of a two-scale macroscopic problem.    We also present a two-scale bulk-surface finite element method for the approximation of the macroscopic model. We report on some benchmarking results as well as numerical simulations in a biologically relevant regime that illustrate the influence of cell-scale heterogeneities on macroscopic  concentrations.
\end{abstract}

\begin{keywords}
 Intercellular signalling, receptor-ligand interactions, homogenisation, nonlinear parabolic equations, surface diffusion, bulk-surface problems, surface finite elements
\end{keywords}

\begin{AMS}
  35B27, 35Kxx, 65M60
\end{AMS}

\section{Introduction} 
Interactions between cells and the response of cells to external stimuli   are  largely regulated by intracellular signalling processes which are themselves activated by interactions between cell membrane receptors and signalling molecules (ligands) diffusing  in the extracellular space.  Consequently, receptor-ligand interactions and the activation of intracellular signalling pathways are involved in many   important biological processes such as the immune response, cell movement and division, tissue development and  homeostasis or  repair, e.g., \cite{Knauer_1984, Lauffenburger_1993, Mesecke_2011}.   The complexity of the biochemistry involved in signalling networks,  necessitates an integrated approach combining theoretical and computational studies with experimental and modelling efforts to further our understanding of cell signalling.  Motivated by this need, in this work, we consider  the modelling and analysis  of signalling processes in biological tissues. Specifically, we are interested in modelling both the cell scale phenomena of receptor binding and cell signalling along with the tissue level dynamics of the ligands. 

Mathematical modelling and analysis of signalling processes involving receptor-ligand interactions  and GTPase (protein) molecules for  a single cell was considered in a number of recent works, for example \cite{Alphonse_2016, Elliott_2017, Raetz_Roeger_2012}. The majority of modelling studies to date in the literature focus only on phenomena at the scale of a single cell or simply naively `average out' the cell scale dependence for tissue level modelling \cite{Marciniak_2003, Menshykau_2013, Sherratt_1995, Wearing_2000}.   
However, the spatial separation between ligands diffusing in the intercellular space and receptors restricted to the cell membrane could be important even in tissue level models as shown, for example, in  \cite{Kurics_2014, Menshykau_2014} where it is crucial
 to ensuring robust branching in models for morphogenesis in organogenesis (e.g., in the formation of the lungs or the kidney). The heterogeneity in the interactions between ligands and receptors on the cell membrane given  by receptor clustering  on cell membranes  \cite{Hartman_2011, Thomason_2002, Welf_2009}  and/or lipid rafts \cite{Alonso_2001, Gaus_2005, Simons_2000} is also important for  intercellular signalling processes.  Similarly, in the mathematical and computational modelling of chemotaxis, cell polarisation through the clustering of  receptors at the leading edge and gradients in the macroscopic ligand field generated by the binding of these receptors  appear crucial to successful migration \cite{Elliott06062012, macdonald2016computational,  mackenzie2016local}.  Thus microscopic modelling of receptor-ligand-based intercellular signalling processes  in which both cell and tissue scale phenomena are accounted for is essential for a better understanding of biological systems. 
  
In this work we consider the multiscale modelling and analysis  of signalling processes in biological tissues.  Starting from a microscopic description consisting of coupled bulk-surface systems of partial differential equations (PDEs) posed in a domain consisting of cells and the extra cellular space, we will derive a macroscopic two-scale model as the number of cells tends to infinity. In contrast to  previous models for receptor-based signalling processes in biological tissues \cite{Ptashnyk_2008},  we consider diffusion of membrane resident species on the cell surface and we also extend previous  models by considering  interactions between receptors and co-receptors on the cell membrane leading to  activation of intracellular signalling processes. Furthermore, we propose a  robust and efficient numerical method for the approximation of the  macroscopic two-scale problem and apply it in a biologically relevant parameter regime.

\changes{
 The main difficulty in the multiscale  analysis of the microscopic  problem considered here is the strong nonlinearity of reaction terms coupled with surface diffusion and the dependence on a small parameter,  corresponding to the size of the microstructure. This requires a rather delicate analysis and a new approach in the derivation of a priori estimates. We employ  the trace and Gagliardo-Nirenberg inequalities together with an iteration processes to show the a priori estimates and boundedness of the solutions of the model equations. Similar ideas were used in \cite{Alphonse_2016}  to show the well-posedness of a system describing nonlinear ligand-receptor interactions for a single cell, whose shape is evolving in time.  However due to the multiscale nature  and the corresponding scaling in the microscopic equations, the techniques from \cite{Alphonse_2016} cannot be applied directly to obtain  uniform a priori estimates for  the solutions of our microscopic model.  To overcome this difficulty we use the structure of the nonlinear reaction terms and the periodic unfolding operator \cite{Cioranescu_2012, Cioranescu_2008, Graf_2014}.   
 }

 The bulk-surface coupling in the homogenised model induces some challenges in the  design of a two-scale numerical scheme. 
For the numerical approximation of the macroscopic two-scale system we employ a two-scale bulk-surface finite element method. Bulk-surface finite element methods have been used in a number of recent studies for the approximation of coupled bulk-surface systems of elliptic and parabolic equations, including those modelling receptor-ligand interactions \cite{elliott2012finite,macdonald2013simple,MadChuVen14,Raetz_Roeger_2012}, however to the best of the authors knowledge all such works have focussed on interactions at the scale of a single cell.  Coupling the bulk-surface finite  element approach with a two-scale finite element method \cite{muntean2010rate}, we are able to treat the approximation of the full macroscopic two-scale system and hence provide, as far as we are aware, the first work in which tissue level models for receptor-ligand interaction are simulated where receptor binding, unbinding and transport as well as cell signalling are taken into account at the cell scale. In order to validate the method we perform some benchmark tests to investigate the convergence of the method. We then propose and simulate a macroscopic two-scale cell signalling model  in a biologically relevant regime. Our results illustrate the influence of the cell shape on the transport of macroscopic species as well as spatial heterogeneities at the cell-scale and their influence on tissue level behaviour. We focus on incorporating the single cell model within a generic  cell signalling process outlined in \cite{Garcia_2014} into our multiscale modelling framework. However we note that the majority of signalling pathways that are described in the literature lie within the  general model framework considered in this work.  For example,\  GTPase (e.g. Rho)  and GPCR (G-protein coupled receptors)  related signalling pathways \cite{Lawson_2014},  uPAR-mediated signalling  processes in human tissue \cite{Smith_2010}  and Brassinosteroid hormone mediated  signalling in plant cells \cite{Clouse_2011}. 
 
The remainder of this paper is organised as follows. In Section~\ref{sec:micro_model} we derive our microscopic model for cell-signalling processes consisting of coupled bulk-surface systems of PDEs.   In Section~\ref{sec:well_pos} we prove existence and uniqueness results and derive some a priori estimates for solutions of the microscopic model. Convergence results in the limit as the number of cells tends to infinity and the resultant macroscopic two-scale model equations satisfied by the limiting solutions are presented in Section~\ref{sec:macro_model}. In Section~\ref{sec:scheme} we formulate a numerical scheme for the approximation of the macroscopic two-scale model. We benchmark the convergence of the scheme in Section~\ref{sec:benchmark} and in Section~\ref{sec:bio_model} we apply the numerical method to the approximation of a biological example of a GTPase signalling network taking parameter values from previous studies. 
\changes{The definitions and main properties of the two-scale convergence and the unfolding method as well as some technical calculations for the proof of the boundedness of a solution of the microscopic model are summarised in the Appendix.}

\section{Microscopic model}\label{sec:micro_model}

In this section we present  a derivation of a microscopic mathematical model    for signalling processes in biological tissues. 
  We consider a Lipschitz domain $\Omega\subset \IR^d$, with $d=2,3$, representing a part of a biological tissue and  assume a periodic distribution of cells in the tissue.  To describe the microscopic structure of the tissue, given by extra- and intracellular spaces separated by cell membranes, we consider a `unit cell' $Y=[0,1]^d$,  and the subdomains  $\overline Y_i\subset Y$ and $Y_e = Y \setminus Y_i$, together with  the boundary  $\Gamma = \partial Y_i$.   The domain occupied  by the intracellular space  is given by $\Omega_i ^\ve= \bigcup_{\xi \in \Xi^\ve} \ve(Y_i + \xi) $, 
where $\Xi^\ve = \{ \xi \in \mathbb Z^d, \; \; \ve(Y_i + \xi) \subset \Omega \}$,  and  the extracellular space is denoted by  $\Omega_e^\ve= \Omega\setminus \overline \Omega_i^\ve$.  The surfaces that describe cell membranes are denoted by $\Gamma^\ve= \bigcup_{\xi \in \Xi^\ve} \ve(\Gamma + \xi)$, see Figure~\ref{fig:micro_domain} for a sketch of the geometry.

  \begin{figure}[htbp]
    \begin{minipage}[b]{0.3\linewidth}
\includegraphics[trim = 0mm 0mm 0mm 0mm,  clip, width=\linewidth]{./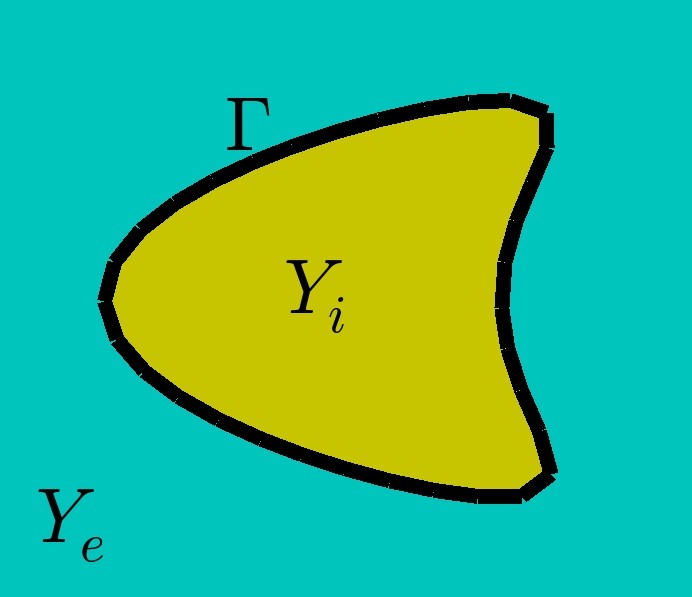}\\[1em]
\end{minipage}
\hskip 1em
  \begin{minipage}[b]{0.65\linewidth}
  \includegraphics[trim = 0mm 0mm 0mm 0mm,  clip, width=\linewidth]{./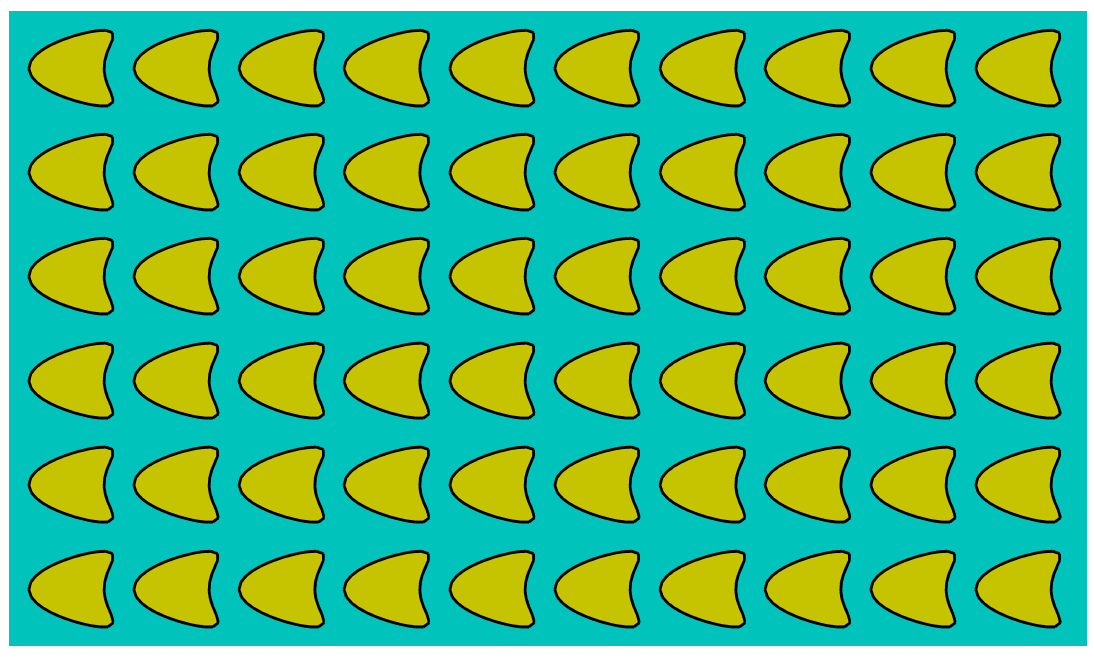} 
\end{minipage}
\caption{The left hand subfigure shows the `unit cell' that describes the microstructure consisting of a single cell with the intra- and extracellular spaces denoted by $Y_i$ and $Y_e$ respectively and the cell membrane by $\Gamma$. The right hand subfigure is a sketch of the tissue consisting of a periodic distribution of identically shaped cells surrounded by the extracellular space.}
  \label{fig:micro_domain}
  \end{figure}

In modelling intercellular signalling processes, we assume that the   signalling molecules (ligands) $c_e^\ve$ diffuse in the extracellular space and  interact with  cell membrane receptors. We distinguish between  free receptors $r_f^\ve$ (or extracellular domains of the free receptors)  and  bound receptors  $r_b^\ve$ (free receptor-ligand complexes). 
The model for the evolution of the ligand concentration \changes{ $c^\ve_e$} in the extracellular space $\Omega_e^\ve$ reads
 \begin{equation} \label{main1}
\begin{aligned}
&\partial_t c^\ve_e - \nabla\cdot ( D^\ve_e(x) \nabla c^\ve_e) = F_e(c^\ve_e) && \text{ in } \Omega_e^\ve,  \; t>0,\\
&D^\ve_e(x) \nabla c^\ve_e \cdot \nu = - \ve G_e (c^\ve_e, r_f^\ve,  r_b^\ve) && \text{ on } \Gamma^\ve,  \; t>0.
\end{aligned}
\end{equation}
\changes{Here the nonlinear Robin boundary condition $G_e (c^\ve_e, r_f^\ve,  r_b^\ve)$ defined by
\[
G_e (u, v,  w)  := a_e^\ve(x) u\,  v - b_e^\ve(x)\,  w,
\]
}
 describes the binding of ligands to free receptors located on the cell membranes, i.e., the creation of receptor-ligand complexes,  with binding rate $a_e^\ve$ and spontaneous dissociation of the complexes back into free receptors and ligands, with dissociation rate $b_e^\ve$. The function  $F_e$ models the production and/or decay of ligands in the extracellular space. 

 The signal from the extracellular  domain is transduced into the cell through the  activation by bound receptors $r_b^\ve$ of either membrane proteins, as is the case in signalling processes mediated  by G-protein-coupled receptors, or, the intracellular domains of  enzyme-linked membrane receptors or  co-receptors, as observed in plant hormone signalling processes. 
    Thus we shall  distinguish  between  active $p_a^\ve$ and  inactive $p_d^\ve$ proteins (co-receptors) or active and inactive intracellular domains of receptors.   We also consider   spontaneous deactivation of proteins (or intracellular domains of receptors)  with the deactivation rate $b_i^\ve$, as well as natural decay of all molecules with decay rates $d_j$, for $j=f,e,d,a$.  
Hence for the receptors and proteins on the cell membrane we obtain the following reaction-diffusion equations 
\begin{equation} \label{main3}
\begin{aligned}
& \partial_t r^\ve_f - \ve^2 D_{f}  \Delta_\Gamma r^\ve_f =  F_f(r^\ve_f, r^\ve_b)-  G_e (c^\ve_e, r_f^\ve, r_b^\ve)  - d_f\, r_f^\ve && \text{on } \Gamma^\ve,  t>0,  \\
& \partial_t r^\ve_b - \ve^2 D_{b} \Delta_\Gamma  r^\ve_b = \qquad  G_e (c^\ve_e, r_f^\ve, r_b^\ve) - G_d(r_b^\ve, p_d^\ve, p_a^\ve)  - d_b \, r_b^\ve && \text{on } \Gamma^\ve,  t>0, \\
& \partial_t p^\ve_d -   \ve^2 D_d \Delta_\Gamma  p^\ve_d  =  F_d(p^\ve_d) - G_d(r_b^\ve, p_d^\ve, p_a^\ve) - d_d \, p_d^\ve && \text{on } \Gamma^\ve,    t>0, \\
& \partial_t p^\ve_a - \ve^2  D_a \Delta_\Gamma  p^\ve_a = \qquad  G_d(r_b^\ve, p_d^\ve, p_a^\ve)  
- G_i(p^\ve_a, c_i^\ve) - d_a \, p_a^\ve && \text{on } \Gamma^\ve,   t>0,
\end{aligned}
\end{equation}
where  $\Delta_\Gamma$ denotes the Laplace-Beltrami operator on the surfaces $\Gamma^\ve$ and  the \changes{activation/deactivation}  reactions 
are defined by 
\changes{
\begin{equation*}
G_d(u, v, w)  := a_i^\ve(x)\,  u \,  v - b_{i}^\ve(x) \, w, 
\end{equation*}
}
\noindent with an \changes{activation (binding)} rate $a_i^\ve$. 
The function 
\changes{
$$
G_i(w, v) := \gamma^\ve_i(x) w-  \kappa_i^\ve(x) v
$$
} 
\noindent describes the transduction of the signal  into the cell interior  by activated proteins on the cell membrane (GTPase molecules)  or activated intracellular domains of enzyme-linked  receptors. 
The functions $F_f$ and $F_d$ model the production of new free receptors and inactive proteins, respectively. 

For the molecules involved in the intracellular part of the signalling pathway, we consider  
\begin{equation} \label{main2}
\begin{aligned}
&\partial_t c_i^\ve -\ve^2 \nabla\cdot ( D^\ve_i(x)  \nabla c_i^\ve ) = F_i(c_i^\ve) && \text{ in } \Omega_i^\ve,   \; t>0,\\
&\ve^2 D^\ve_i (x) \nabla c_i^\ve \cdot \nu = \ve G_i(p^\ve_a, c_i^\ve) && \text{ on } \Gamma^\ve,  \; t>0,
\end{aligned}
\end{equation}
where the function  $F_i$ models production and/or decay of the intracellular signalling molecules $c_i^\ve$. 

 We complete the microscopic model with the initial conditions
\begin{equation} \label{init_cond}
\begin{aligned}
& c^\ve_e(0, x) = c_{e,0}(x) \; && \text{ for } \; x\in \Omega_e^\ve, \\
& c^\ve_i(0,x) = c^\ve_{i,0}(x),  && c^\ve_{i,0}(x)  = c_{i,1}(x) c_{i,2}(x/\ve) \quad && \text{ for } \; x\in \Omega_i^\ve,   \\
& r_j^\ve(0,x) = r^\ve_{j,0}(x),   &&  r^\ve_{j,0}(x)= r_{j,1}(x) r_{j,2}(x/\ve) \; && \text{ for } \;   x\in \Gamma^\ve, \\
& p_s^\ve(0,x) = p^\ve_{s,0}(x),   &&  p^\ve_{s,0}(x)= p_{s,1}(x) p_{s,2}(x/\ve)  \; && \text{ for } \;  x\in  \Gamma^\ve, 
\end{aligned}
\end{equation}
where  $j=f,b$, and $s=d,a$, and the boundary condition for $c^\ve_e$ on the external boundary $\partial \Omega$ is given by,
\begin{equation} \label{bc_cond}
\begin{aligned}
 D^\ve_e(x) \nabla c^\ve_e \cdot \nu = 0  \quad  \text{ on } \; \partial \Omega, \;  \; t>0. 
\end{aligned}
\end{equation}

\changes{
\begin{remark}[Modelling generalisations]
   For simplicity of presentation we consider constant diffusion coefficients in the equations on $\Gamma^\ve$, however both the mathematical analysis and the numerical implementation allow for general space dependence $(x \text{ and/or } x/\ve)$  in the diffusion coefficients. 

The $\ve$-dependent scaling in the microscopic model \eqref{main1}--\eqref{bc_cond} yields  nontrivial equations in the limit and indeed is consistent with biological estimates of the parameter values c.f., Section~\ref{sec:bio_model}. 

The structure of space-dependent initial conditions ensures the  uniform in $\ve$ boundedness and strong two-scale convergence of the initial data $c^\ve_{i,0}$,  $r^\ve_{j,0}$,  and $p_{s,0}^\ve$ as $\ve \to 0$, where $j=f,b$ and $s=d,a$. It is possible to consider more general initial conditions, i.e.\
 $c^\ve_{i,0}(x)= c_{i,0}(x, x/\ve)$,  $r^\ve_{j,0}(x) = r_{j,0}(x, x/\ve)$, and $p^\ve_{s,0}(x) = p_{s,0}(x, x/\ve)$ if one assumes continuity of $c_{i,0}$, $r_{j,0}$, and $p_{s,0}$ with respect to at least one of the spatial variables, i.e., macroscopic $(x\in \Omega)$ or  microscopic $(y\in Y_i$ or $y\in \Gamma)$. 
\end{remark}

\begin{remark}[Binding kinetics]
  For reasons of clarity of exposition in the microscopic model we consider  linear or quadratic reactions for interactions between signalling molecules, receptors and proteins. Such reactions capture the main features of the biologically relevant interactions. The extension of the analysis and numerical simulations considered here to more general binding models such as cooperative binding or Michaelis-Menten terms and the addition of general Lipschitz functions in the reaction terms modelling additional phenomena  should be a relatively straightforward  task and is not anticipated to induce any major technical complications. 
\end{remark}
}

\changes{To ease readability, we introduce the following notation for $\tau \in (0, T]$  and any $T>0$, $\Omega^\ve_{l,\tau} :=(0,\tau)\times  \Omega^\ve_{l}$,  for $l=e,i$, 
$\Gamma^\ve_{\tau} :=(0,\tau)\times  \Gamma^\ve$,  $\Omega_\tau := \Omega\times(0, \tau)$, $\Gamma_\tau := \Gamma\times (0, \tau)$,
$$
\begin{aligned} 
&\la \phi, \psi \ra_{\Omega^\ve_{l, \tau} }:= \int_0^\tau \hspace{-0.11 cm}  \int_{\Omega^\ve_l} \phi\, \psi \, dx dt, \text{ for } \; l=e,i, && 
\la \phi, \psi \ra_{\Gamma^\ve_{\tau} }:= \int_0^\tau \hspace{-0.11 cm}  \int_{\Gamma^\ve} \phi\, \psi \, d\sigma^\ve dt, \\
& \la \phi, \psi \ra_{Y_l\times \Omega_{\tau} }:= \int_0^\tau \hspace{-0.11 cm}  \int_{\Omega} \int_{Y_l} \phi\, \psi \, dy dx dt,   \text{ for }  l=e,i, 
&& \la \phi, \psi \ra_{\Omega_{\tau} }:= \int_0^\tau \hspace{-0.11 cm}  \int_{\Omega} \phi\, \psi \, dx dt, \\
 &\la \phi, \psi \ra_{\Gamma\times\Omega_{\tau} }:= \int_0^\tau \hspace{-0.11 cm}  \int_{\Omega} \int_\Gamma \phi\, \psi \, d\sigma_y dx dt,  &&
\la \phi, \psi \ra_{\Gamma_{\tau} }:= \int_0^\tau \hspace{-0.11 cm}  \int_{\Gamma} \phi\, \psi \, d\sigma_y dt.
\end{aligned} 
$$
By $\la \cdot, \cdot \ra$ we denote the dual product in $H^1(\Omega_l^\ve)$,  with  $l=i,e$, or in $H^1(\Gamma^\ve)$ where it is clear from the arguments which of the three is meant.
} 

\section{Well posedness  and a priori estimates for the microscopic model} \label{sec:well_pos}

\changes{
In this section, we prove existence and uniqueness of a solution to  the microscopic problem \eqref{main1}--\eqref{bc_cond}.   We also derive a priori estimates that allow us to pass to the limit as the number of cells tends to infinity.

We use a  Galerkin method together with fixed point arguments to show the existence of a weak solution of \eqref{main1}--\eqref{bc_cond}. The main difficulty in the analysis is  to show  a priori estimates for solutions of the microscopic problem,  which are  global in time and independent of $\ve$. This is technically challenging due to the quadratic nonlinearities in the reaction terms and the scaling of the diffusion of the microscopic species. The tools we use to derive the estimates are the periodic unfolding method, Gagliardo-Nirenberg inequalities and in the proof of  boundedness of the species, we employ an Alikakos iteration technique~\cite{Alikakos_1979}. 
Uniqueness of the solution to \eqref{main1}--\eqref{bc_cond} follows from the boundedness result and the local Lipschitz continuity of the nonlinear terms. 

We find it convenient to use the periodic unfolding method described in Appendix \ref{appendix_A1}, see also  \cite{Cioranescu_2012, Cioranescu_2008}. There are two main advantages in using  unfolding methods in relation to the present study:
\begin{itemize} 
\item Unfolding operators  map functions defined on the oscillating $\ve$-dependent domains to functions defined on fixed domains which now depend on both macroscopic and microscopic variables; i.e., we can study functions on fixed domains whose geometry is independent of $\ve$ but in exchange must double the spatial dimension.
\item
The unfolding results in a separation between microscopic and macroscopic variables in the unfolded functions. This allows us to take advantage of the fact that under the action of the unfolding operator the differential operator (the Laplace-Beltrami operator)  in the equations defined on the oscillating surfaces is transformed into a differential operator with respect to the microscopic variables only.  Thus we are able to utilise the   higher regularity with respect to microscopic variables of the species  defined on the  oscillating surfaces and this appears to be crucial in establishing boundedness of the species uniformly in $\ve$.
\end{itemize}

We make the following biologically reasonable assumptions on the  coefficients  in the model equations and on the initial data. 

\begin{assumption}[Assumptions on the problem data for \eqref{main1}--\eqref{bc_cond}] \label{assumption}

\begin{itemize} 
\item We assume the usual ellipticity and boundedness conditions on the diffusivities of the different species, i.e., 
$
D_e \in C(\overline \Omega; L^\infty(Y_e))\text{ , with } D_e(x,y) \geq  \changes{\alpha_e } >0\text{ for a.a. }\ y \in Y_e\text{ and  }x \in \Omega,
$   $D_i \in L^\infty(Y_i)\text{ with }D_i(y) \geq \changes{ \alpha_i }>0\text{ for a.a. } y \in Y_i,
$
and
  $
D_j >0,\text{  for }j=f, b, d, a.
$ 
\item For the reaction kinetic coefficients, for $l=e,i$, we assume

 \[
 a_l , b_l, \gamma_i , \kappa_i \in L^\infty(\Gamma)\text{ and } a_l , b_l, \gamma_i , \kappa_i \text{ are nonnegative}.
 \]
 Moreover, we assume   for  $l=e,i$
\[
a_l^\ve(x) = a_l(x/\ve) , b_l^\ve(x) = b_l(x/\ve)\text{ and }  \gamma_i^\ve(x) = \gamma_i(x/\ve), \kappa_i^\ve(x) = \kappa_i(x/\ve).
\]

\item We assume boundedness of the initial conditions, i.e.,  
\[
c_{e,0}, c_{i,1} \in L^\infty(\Omega)\text{ and } c_{i,2} \in L^\infty(Y),
\]
and that for $j = f,b\text{  and  }s = a,d,$
\[
 r_{j,1}, p_{s,1} \in L^\infty(\Omega)\text{ and }r_{j,2}, p_{s,2} \in L^\infty(\Gamma).
 \]
 
\item  We further assume that the production/decay terms satisfy,
$F_l: \IR \to \IR$,  for $l=e,i,d$, and  $F_f:\IR^2 \to \IR$ are locally Lipschitz continuous  in $(-\mu, \infty)$ and $(-\mu, \infty)^2$, respectively, for some $\mu>0$.

Moreover, we assume the following growth bounds, for $l=e,i,d$
\[
F_l(\xi) \xi_- \leq |\xi_-|^2 \text{ and  }F_f(\xi, \eta)\xi_{-} \leq  C(|\xi_-|^2 + |\eta_-|^2) ,
\]
for $\xi_{-} = \min\{ \xi, 0\}$ and $\eta_{-} = \min\{ \eta, 0\}$,    and for $l=e,i,d$
\[
\text{$|F_l(\xi)|\leq C(1+ \xi)$    and  $|F_f(\xi, \eta)|\leq C(1+ \xi+ \eta)$    for $\xi, \eta \in \IR_+$.}
\] 
\end{itemize}

\end{assumption}

We define  $D_i^\ve(x) := \tilde D_i (x/\ve)$ and $D^\ve_e(x) := \tilde D_e(x, x/\ve)$ for $x\in \Omega$,  where $\tilde D_i$ and $\tilde D_e$ are $Y$-periodic extensions of  $D_i$ and of $D_e(x, \cdot)$ for $x\in \overline \Omega$, respectively. } 

We now introduce our notion of weak solutions of the microscopic problem \eqref{main1}--\eqref{bc_cond}. 
\begin{definition}[Weak solution of the microscopic problem]
A weak solution of the microscopic model \eqref{main1}--\eqref{bc_cond} are functions $c^\ve_l \in L^2(0, T; H^1(\Omega^\ve_l))$ and $r_j^\ve , p_s^\ve \in  L^2(0, T; H^1(\Gamma^\ve))$, with $\partial_t c^\ve_l \in L^2(0, T; H^1(\Omega_l^\ve)^\prime)$  and   $\partial_t r_j^\ve , \partial_t p_s^\ve \in  L^2(0, T; H^1(\Gamma^\ve)^\prime)$, for  $l=e,i$,  $s=a,d$,  and $j=f,b$,
satisfying 
\begin{equation} \label{weak_sol_1}
\begin{aligned} 
& \la \partial_t c^\ve_e, \phi \ra + \la D^\ve_e(x) \nabla c^\ve_e, \nabla \phi \ra_{\Omega_{e,T}^\ve} 
= \la  F_e(c^\ve_e), \phi \ra_{\Omega_{e,T}^\ve}   - \ve \la G_e (c^\ve_e, r_f^\ve, r_b^\ve), \phi \ra_{\Gamma_{T}^\ve},  \\
& \la \partial_t c^\ve_i, \psi \ra +  \la \ve^2 D^\ve_i(x) \nabla c^\ve_i, \nabla \psi \ra_{\Omega_{i,T}^\ve} 
= \la  F_i(c^\ve_i), \psi \ra_{\Omega_{i,T}^\ve}  + \ve \la G_i (p_a^\ve, c^\ve_i), \psi \ra_{\Gamma_{T}^\ve} , 
\end{aligned} 
\end{equation}
and 
\begin{equation} \label{weak_sol_2}
\begin{aligned}
 \la \partial_t r^\ve_f, \varphi \ra+  \la  \ve ^2 D_f \nabla_\Gamma r^\ve_f, \nabla_\Gamma  \varphi \ra_{\Gamma_{T}^\ve} 
&  =   \la F_f(r^\ve_f, r^\ve_b) - G_e (c^\ve_e, r_f^\ve, r_b^\ve) - d_f  r_f^\ve, \varphi \ra_{\Gamma_{T}^\ve} , \\
 \la \partial_t r^\ve_b, \varphi \ra +  \la \ve^2 D_b \nabla_\Gamma  r^\ve_b, \nabla_\Gamma  \varphi \ra_{\Gamma_{T}^\ve} 
&  =  \la G_e (c^\ve_e, r_f^\ve, r_b^\ve) - G_d ( r_b^\ve, p_d^\ve, p_a^\ve), \varphi \ra_{\Gamma_{T}^\ve} \\
& \qquad \qquad  \qquad \qquad \qquad  - \la d_b  \, r_b^\ve, \varphi \ra_{\Gamma_{T}^\ve} ,\\
 \la \partial_t p^\ve_d, \varphi \ra + \la \ve^2 D_d \nabla_\Gamma  p^\ve_d, \nabla_\Gamma  \varphi \ra_{\Gamma_{T}^\ve} 
& =  \la F_d(p^\ve_d) - G_d ( r_b^\ve, p_d^\ve, p_a^\ve) - d_d  p_d^\ve, \varphi \ra_{\Gamma_{T}^\ve} , \\
 \la \partial_t p^\ve_a, \varphi \ra +  \la \ve^2 D_a \nabla_\Gamma  p^\ve_a, \nabla_\Gamma  \varphi \ra_{\Gamma_{T}^\ve} 
&  =  \la G_d ( r_b^\ve, p_d^\ve, p_a^\ve) - G_i(p_a^\ve, c_i^\ve)- d_a  p_a^\ve, \varphi \ra_{\Gamma_{T}^\ve} ,
\end{aligned} 
\end{equation}
for all  $\phi \in L^2(0,T; H^1(\Omega^\ve_e))$,  $\psi \in L^2(0,T; H^1(\Omega^\ve_i))$, and $\varphi \in L^2(0,T; H^1(\Gamma^\ve))$, with the  initial conditions~\eqref{init_cond} satisfied  in the $L^2$-sense. 
\end{definition}

\changes{ 
In the subsequent analysis we shall make repeated use of the the following scaled trace inequality.
\begin{remark}[Scaled trace inequality]
 Using the assumptions on the microscopic geometry of $\Omega^\ve_l$ and applying the standard trace inequality for functions $v \in H^1(Y_l)$,  see e.g.~\eqref{trace_gen}, together with a scaling argument, we obtain the following trace inequality for the $L^2$-norm on $\Gamma^\ve$: 
\begin{equation}\label{ineq:trace}
\ve \|v\|^2_{L^2(\Gamma^\ve)} \leq \mu_\delta \|v\|^2_{L^2(\Omega_l^\ve)}  + \ve^2 \delta \|\nabla v\|^2_{L^2(\Omega_l^\ve)} \qquad \text{ with  } l = e,i, 
\end{equation}
for any fixed $\delta >0$, where the constant $\mu_\delta>0$   depends only on  $\delta$, $Y$, $Y_i$ and $\Gamma$,  and is independent of $\ve$, see e.g.\  \cite{Hornung_1991, Ptashnyk_2008}.  Notice that the natural $\ve$-scaling in the $L^2$-norm on the oscillating boundaries (surfaces of the microstructure) reflects the difference between volume and surface dimensions.   
 \end{remark}
 }
\changes{
\begin{remark}[$H^1$ extension]\label{rem:extension}
The assumptions on the structure of the microscopic domain $\Omega^\ve_e$ ensure that for  $v\in W^p(\Omega_e^\ve)$, with $1\leq p < \infty$,  there exists   an extension $\bar v$  from $\Omega^\ve_e$ into $\Omega$ such that 
\begin{equation}\label{estim:extension}
\|\bar v \|_{L^p(\Omega)} \leq \mu \|v \|_{L^p(\Omega_e^\ve)}, \quad \|\nabla \bar v \|_{L^p(\Omega)} \leq \mu \| \nabla v \|_{L^p(\Omega_e^\ve)}, 
\end{equation}
where the constant $\mu$ is independent of $\ve$, see e.g.\ \cite{Acerbi_1992, Cioranescu_II_1999, Hornung_1991}. 
 \end{remark}
}

\changes{ 
We now state our main result of this section, specifically the existence and uniqueness of  a weak solution of microscopic model  \eqref{main1}--\eqref{bc_cond} together with  
uniform (in $\ve$) estimates.
\begin{theorem}\label{theorem_exist}
Under Assumption~\ref{assumption}, for every fixed $\ve>0$,  there exists a unique nonnegative weak solution  of  the microscopic problem \eqref{main1}--\eqref{bc_cond}, which    satisfies the  a~priori estimates
\begin{equation}\label{estim:apriori_1} 
\begin{aligned}
&\|c_e^\ve\|_{L^\infty(0,T; L^2(\Omega_e^\ve))} + \|\nabla c_e^\ve\|_{ L^2(\Omega_{e,T}^\ve)}+\sqrt{\ve} \|c_e^\ve\|_{ L^2(\Gamma_T^\ve)} \leq C ,\\
&\|c_i^\ve\|_{L^\infty(0,T; L^2(\Omega_i^\ve))} + \|\ve \nabla c_i^\ve\|_{ L^2(\Omega_{i, T}^\ve)}  + \sqrt{\ve} \|c_i^\ve\|_{ L^2(\Gamma_T^\ve)} \leq C , \\
&\sqrt{\ve} \|r_j^\ve\|_{L^\infty(0,T; L^2(\Gamma^\ve))} + \sqrt{\ve} \|\ve\nabla_\Gamma r_j^\ve\|_{L^2(\Gamma_T^\ve)}  \leq C,  \\
&\sqrt{\ve}\|p_s^\ve\|_{L^\infty(0,T; L^2(\Gamma^\ve))} + \sqrt{\ve} \|\ve\nabla_\Gamma p_s^\ve\|_{L^2(\Gamma_T^\ve)}   \leq C, 
\end{aligned} 
\end{equation}
and for $l=e,i$, $s=a,d$, and $j=f,b$
\begin{equation}\label{estim:bound} 
\begin{aligned}
\|c_l^\ve\|_{L^\infty(0,T; L^\infty(\Omega_l^\ve))} +  \|r_j^\ve\|_{L^\infty(0,T; L^\infty(\Gamma^\ve))} + \|p_s^\ve\|_{L^\infty(0,T; L^\infty(\Gamma^\ve))}  \leq C, 
\end{aligned} 
\end{equation}
where  the constant $C$ is independent of  $\ve$.
\end{theorem}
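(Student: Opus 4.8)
The plan is to proceed in four stages for a fixed $\varepsilon>0$: construct a solution by Galerkin approximation combined with a fixed-point argument; establish nonnegativity; derive the energy bounds \eqref{estim:apriori_1}; and finally obtain the $\varepsilon$-uniform $L^\infty$ bounds \eqref{estim:bound}, from which uniqueness will follow. For existence I would first truncate the quadratic reaction terms $G_e,G_d,G_i$ (and extend $F_l,F_f$ to globally Lipschitz functions) so that all nonlinearities become globally Lipschitz, and run a Galerkin scheme in bases of $H^1(\Omega_l^\varepsilon)$ and $H^1(\Gamma^\varepsilon)$. The resulting coupled finite-dimensional system is solved by a Banach fixed-point argument: over a short time interval the map sending a set of guessed concentrations to the solutions of the linear equations obtained by freezing the (now Lipschitz) reaction terms is a contraction in $C([0,\tau];L^2)$. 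The a priori bounds of the next stages, established for the approximating sequence, show the truncation is never active and let me pass to the limit to obtain a weak solution on $[0,T]$; the time-derivative regularity then follows by comparison in the weak formulation.

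For nonnegativity I would test each equation with the negative part of its own unknown. The sign conditions $F_l(\xi)\xi_-\le|\xi_-|^2$ and $F_f(\xi,\eta)\xi_-\le C(|\xi_-|^2+|\eta_-|^2)$ from Assumption~\ref{assumption}, together with the bilinear/linear structure of $G_e,G_d,G_i$ (each of the type $a\,uv-b\,w$ or $\gamma\,w-\kappa\,v$), ensure that the coupling contributions are either favourably signed or controlled by the $L^2$-norms of the negative parts; summing over the system and applying Gronwall forces all negative parts to vanish. The energy estimates \eqref{estim:apriori_1} are then obtained by testing each bulk equation with $c_l^\varepsilon$ and each surface equation with the $\varepsilon$-weighted unknown $\varepsilon\,r_j^\varepsilon$ or $\varepsilon\,p_s^\varepsilon$, the weight $\varepsilon$ being precisely the surface-to-volume scaling that matches the norms in \eqref{estim:apriori_1}. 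Ellipticity supplies the gradient terms; using nonnegativity the dominant boundary quadratics (e.g.\ the contribution $-\varepsilon\,a_e r_f^\varepsilon (c_e^\varepsilon)^2$) are favourably signed, the remaining linear boundary terms are absorbed into the bulk gradient plus $L^2$-volume via the scaled trace inequality \eqref{ineq:trace} with small $\delta$, and summing the weighted identities exploits the cancellation of $G_e$ and $G_d$ between the paired equations ($r_f/r_b$ and $p_d/p_a$), closing a Gronwall inequality for the total energy.

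The heart of the argument --- and the step I expect to be the main obstacle --- is the $\varepsilon$-uniform boundedness \eqref{estim:bound}, which I would obtain by an Alikakos (Moser) iteration. Testing with the powers $(c_l^\varepsilon)^{2^k-1}$, $(r_j^\varepsilon)^{2^k-1}$, $(p_s^\varepsilon)^{2^k-1}$ yields a recursion for the $L^{2^k}$-norms, but because the surface diffusion is scaled by $\varepsilon^2$ a naive iteration produces $\varepsilon$-dependent constants. The resolution is to apply first the periodic unfolding operator of Appendix~\ref{appendix_A1}: on the fixed reference cell the Laplace-Beltrami term becomes order one in the microscopic variable $y\in\Gamma$, so the unfolded surface species enjoy genuine $H^1(\Gamma)$-regularity and the Gagliardo-Nirenberg inequality on $\Gamma$ (and on $Y_l$ for the bulk) can be used with $\varepsilon$-independent constants to interpolate the high-power nonlinear terms against the diffusion energy and close each step. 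The quadratic couplings are handled by feeding in the lower-order bounds from the energy estimates and iterating the species in a compatible order so that the $L^{2^k}$ bounds propagate through the system; letting $k\to\infty$ yields the uniform $L^\infty$ bounds.

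Finally, with \eqref{estim:bound} in hand all the nonlinearities are Lipschitz on the range of the solutions, so uniqueness follows in the standard way: subtracting two solutions, testing the differences with themselves, controlling the boundary couplings via the scaled trace inequality \eqref{ineq:trace}, and applying Gronwall.
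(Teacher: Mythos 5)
Your proposal is correct and follows essentially the same route as the paper: Galerkin plus a fixed-point argument on a truncated system for existence, testing with negative parts for nonnegativity, testing with the solutions and the paired sums $r_f^\ve+r_b^\ve$, $p_d^\ve+p_a^\ve$ (with the scaled trace inequality) for \eqref{estim:apriori_1}, an Alikakos iteration performed on the \emph{unfolded} system so that Gagliardo--Nirenberg on the fixed reference cell gives $\ve$-independent constants for \eqref{estim:bound}, and uniqueness from boundedness and local Lipschitz continuity. The only cosmetic difference is that the paper closes the fixed-point step with Schauder (compactness of $H^1(0,T;H^1(\Gamma^\ve)')\cap L^2(0,T;H^1(\Gamma^\ve))$ in $L^2(0,T;L^4(\Gamma^\ve))$) rather than your Banach contraction on short time intervals; both are viable for fixed $\ve$.
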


To aid readability, we split the proof of Theorem \ref{theorem_exist} into a series of Lemmas. Namely, in Lemmas \ref{lem:existence},  \ref{lem:positivity}, \ref{lem:apriori}, \ref{lem:boundedness} and \ref{lem:uniqueness} we show existence, nonnegativity, the apriori estimates \eqref{estim:apriori_1}, boundedness and uniqueness of solutions to \eqref{main1}--\eqref{bc_cond} respectively.
 
\begin{lemma} \label{lem:existence}
There exists a weak solution to the microscopic problem  \eqref{main1}--\eqref{bc_cond}.
\end{lemma}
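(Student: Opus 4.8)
The plan is to combine a Galerkin approximation with a fixed point argument, as anticipated in the introductory remarks of this section. Since the reaction terms $F_l$, $F_f$ and the coupling functions $G_e, G_d, G_i$ are only locally Lipschitz on $(-\mu,\infty)$, I would first replace them by globally Lipschitz, bounded truncations (cutting off for arguments of modulus larger than some large $M$), solve the truncated problem for the fixed $\ve$ at hand, and then remove the truncation a posteriori: once the nonnegativity of Lemma~\ref{lem:positivity} and the uniform $L^\infty$ bounds of Lemma~\ref{lem:boundedness} are available, the solution stays in the range where the truncated and original nonlinearities coincide, so a weak solution of the truncated problem is a weak solution of \eqref{main1}--\eqref{bc_cond} in the sense of \eqref{weak_sol_1}--\eqref{weak_sol_2}.

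For the truncated problem I would decouple the system by freezing the nonlinear arguments: given $(\hat c_e,\hat c_i,\hat r_f,\hat r_b,\hat p_d,\hat p_a)$ in a closed, convex, bounded subset $K$ of $L^2(\Omega_{e,T}^\ve)\times\cdots\times L^2(\Gamma_T^\ve)$, insert them into the (truncated) reaction and boundary terms to obtain a linear, decoupled system for the genuine unknowns. Each scalar linear parabolic equation---the bulk equations on $\Omega_e^\ve$ and $\Omega_i^\ve$ with Robin/Neumann data, and the surface equations on $\Gamma^\ve$ involving $\Delta_\Gamma$---is solved by a standard Galerkin scheme, for example using eigenfunctions of the Neumann Laplacian and of $-\Delta_\Gamma$ as a basis. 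The resulting Galerkin ODE systems have globally Lipschitz right-hand sides, hence unique global-in-time solutions. Testing with the discrete solution and using the scaled trace inequality \eqref{ineq:trace} (with $\delta$ chosen small enough to absorb the boundary contributions into the diffusive terms), followed by Gronwall's inequality, yields bounds uniform in the Galerkin index in $L^\infty(0,T;L^2)\cap L^2(0,T;H^1)$ for each species, together with bounds on the time derivatives in the dual spaces.

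Passing to the limit in the Galerkin index is then routine: weak and weak-$*$ compactness supply limits of the linear terms, and the Aubin--Lions--Simon lemma gives strong $L^2$ convergence in the bulk and, via trace compactness, on the surfaces $\Gamma^\ve$, which suffices to pass to the limit in the (now Lipschitz and bounded) frozen nonlinearities. This defines a solution map $\mathcal{T}\colon K\to K$ sending the frozen data to the solution of the linear problem. To close the coupling I would verify that $\mathcal{T}$ maps $K$ into itself, is continuous, and is compact---compactness again coming from Aubin--Lions---and then invoke Schauder's fixed point theorem; a fixed point is precisely a weak solution of the truncated problem.

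I expect the main obstacle to be controlling the bulk-surface coupling uniformly. The $\ve$-weighted Robin terms $\ve\,\la G_e(c_e^\ve,r_f^\ve,r_b^\ve),\phi\ra_{\Gamma_T^\ve}$ intertwine the $H^1(\Omega_e^\ve)$ and $H^1(\Gamma^\ve)$ estimates, and the quadratic structure $a_e^\ve c_e^\ve r_f^\ve$ is not controlled by $L^2$ norms alone; this is exactly what forces the truncation and the careful use of \eqref{ineq:trace}. Equally delicate is securing enough compactness on the oscillating surfaces $\Gamma^\ve$ to pass to the limit in the quadratic boundary terms, especially given the $\ve^2$-scaling of the surface diffusion, which weakens the available gradient control. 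Since $\ve$ is fixed throughout Lemma~\ref{lem:existence}, these difficulties are technical rather than fatal, but they dictate the order in which the estimates must be assembled.
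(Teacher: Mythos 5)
Your overall strategy---Galerkin approximation, a Schauder fixed point to close the nonlinear coupling, and an a posteriori removal of a truncation using nonnegativity and $L^\infty$ bounds---is the same as the paper's, but the two constructions differ in where the linearisation is made. The paper freezes only the pair $(r_f^\ve,r_b^\ve)$, i.e.\ it defines $K:\mathcal A\to\mathcal A$ on the nonnegative cone $\mathcal A\subset L^2(0,T;L^4(\Gamma^\ve))^2$ by substituting $(g^\ve,h^\ve)$ for $r_f^\ve$ in $G_e$ and for $r_b^\ve$ in $G_d$ and $F_f$ (see \eqref{weak_sol_1_iter}--\eqref{weak_sol_2_iter}); this is exactly the minimal freezing needed to make every quadratic term linear in the remaining unknowns, so the rest of the system stays coupled and is solved as one semilinear problem by Galerkin, with the frozen-problem analogues of \eqref{estim:apriori_1} (containing $\ve\|h^\ve\|^2_{L^2(0,\tau;L^q(\Gamma^\ve))}$) and the compact embedding of $H^1(0,T;H^1(\Gamma^\ve)')\cap L^2(0,T;H^1(\Gamma^\ve))$ into $L^2(0,T;L^4(\Gamma^\ve))$ supplying the Schauder hypotheses. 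Notably, the paper does \emph{not} truncate the production terms $F_l$ at this stage: nonnegativity is proved for the frozen problem inside the iteration (which is why $\mathcal A$ is the nonnegative cone), and the linear-growth assumption $|F_l(\xi)|\le C(1+\xi)$ for $\xi\ge 0$ then suffices. Your version, which freezes all six unknowns and fully decouples into scalar linear equations, buys simpler linear solvability at the price of a six-component fixed point and a blanket truncation.

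The one point you should not treat as routine is the removal of the truncation. You appeal to Lemmas~\ref{lem:positivity} and \ref{lem:boundedness}, but in the paper those are proved for a very specific truncation---only $r_{f,M}^\ve$ inside $G_e$ and $r_{b,M}^\ve$ inside $G_d$, with $r_{j,M}^\ve$ the sign-preserving cut-off at $\pm M$---chosen precisely so that the cancellations exploited in Lemma~\ref{lem:apriori} (testing the equation for $r_f^\ve+r_b^\ve$ so the $\pm G_e$ terms cancel, and likewise for $p_d^\ve+p_a^\ve$ with $G_d$) and the sign structure $-a_e^\ve c_e^\ve r_{f,M}^\ve\, r_f^\ve\le 0$ survive, yielding estimates \emph{uniform in $M$}. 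A generic cut-off of all the nonlinearities need not preserve these cancellations, and without uniform-in-$M$ bounds you cannot conclude that the truncated solution eventually lies in the region where the truncated and original nonlinearities agree. So either adopt the paper's surgical truncation, or verify explicitly that your cut-offs retain the cancellation and sign structure used in the energy and Alikakos-iteration arguments.
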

\begin{proof}  
Existence of a weak solution to problem  \eqref{main1}--\eqref{bc_cond}  is demonstrated  by showing the existence of a fixed point of the operator equation   $K: \mathcal A \to \mathcal A$, with 
$$
\mathcal A= \{ (u,v) \in L^2(0, T; L^4(\Gamma^\ve))^2, \; \text{ with } \;  u\geq 0 \text{ and } v \geq 0 \text{ on } (0,T)\times \Gamma^\ve \}, 
$$
 defined such that for given $(g^\ve,h^\ve)\in \mathcal A$ we consider   $(r_f^\ve, r_b^\ve) = K(g^\ve, h^\ve)$, where the   functions  $r_f^\ve$ and $r_b^\ve$ are solutions of the following coupled problem 
\begin{equation} \label{weak_sol_1_iter}
\begin{aligned} 
& \partial_t c^\ve_e- \nabla\cdot( D^\ve_e(x) \nabla c^\ve_e)
=  F_e(c^\ve_e) && \text{ in } \; \Omega_{e,T}^\ve, \\
& D^\ve_e(x) \nabla c^\ve_e \cdot \nu = - \ve  G_e (c^\ve_e, g^\ve, h^\ve)  && \text{ on } \; \Gamma_{T}^\ve, \\
& D^\ve_e(x) \nabla c^\ve_e \cdot \nu = 0 &&  \text{ on } \; (\partial\Omega)_{T}, \\
&  \partial_t c^\ve_i-  \ve^2 \nabla\cdot( D^\ve_i(x) \nabla c^\ve_i)  
=  F_i(c^\ve_i)  && \text{ in } \; \Omega_{i,T}^\ve, \\
& D^\ve_i(x) \nabla c^\ve_i \cdot \nu =   \ve  G_i (p_a^\ve, c^\ve_i) && \text{ on } \;  \Gamma_{T}^\ve,  
\end{aligned} 
\end{equation}
and 
\begin{equation} \label{weak_sol_2_iter}
\begin{aligned}
&  \partial_t r^\ve_f -   \ve ^2  \nabla_\Gamma\cdot (D_f \nabla_\Gamma r^\ve_f) 
=    F_f(r^\ve_f, h^\ve) - G_e (c^\ve_e, g^\ve,  r_b^\ve) - d_f\,  r_f^\ve , \\
&  \partial_t r^\ve_b  -   \ve^2  \nabla_\Gamma\cdot (D_b \nabla_\Gamma  r^\ve_b)
=    G_e (c^\ve_e, g^\ve, r_b^\ve) - G_d (h^\ve, p_d^\ve, p_a^\ve) - d_b\,  r_b^\ve ,\\
&  \partial_t p^\ve_d  -  \ve^2  \nabla_\Gamma\cdot (D_d \nabla_\Gamma  p^\ve_d) 
=   F_d(p^\ve_d) - G_d ( h^\ve, p_d^\ve, p_a^\ve) - d_d\,  p_d^\ve , \\
&  \partial_t p^\ve_a   -   \ve^2 \nabla_\Gamma\cdot ( D_a \nabla_\Gamma  p^\ve_a) 
=   G_d (h^\ve, p_d^\ve, p_a^\ve) - G_i(p_a^\ve, c_i^\ve)- d_a\,  p_a^\ve, 
\end{aligned} 
\end{equation}
together with the initial  conditions \eqref{init_cond}. 

 To prove the nonnegativity of solutions of problem \eqref{weak_sol_1_iter}, \eqref{weak_sol_2_iter}, and  \eqref{init_cond} we start by taking $c_e^{\ve, -}  = \min\{c_e^\ve, 0\}$ as a test function in the equation for $c_e^{\ve}$ in \eqref{weak_sol_1_iter}.
  Using the nonnegativity of  $g^\ve$ and  $h^\ve$, the assumptions on $F_e$ and the structure of function $G_e$  we obtain that $\|c_e^{\ve, -}\|_{L^\infty(0,T; L^2(\Omega_e^\ve))} \leq 0$.  Hence $c_e^{\ve, -} =0$ a.e. in $(0,T)\times \Omega_{e}^\ve$ and $c_e^\ve \geq 0$ a.e.\ in $(0,T)\times \Omega_{e}^\ve$. 
Then using the nonnegativity of $c^\ve_e$, $g^\ve$, and $h^\ve$,  and choosing $c_i^{\ve, -}$,   $r_l^{\ve, -}$, $p_s^{\ve, -}$, with $l=f,b$ and $s=a,d$,  as test functions in the  equation for $c_i^{\ve}$  in   \eqref{weak_sol_1_iter} and in equations  in \eqref{weak_sol_2_iter}, respectively, and using the assumptions on the functions $F_f$, $F_d$, $F_i$, $G_i$ and $G_d$ we obtain nonnegativity of 
$c_i^\ve$, $r_l^\ve$, and $p_s^\ve$, where  $l=f,b$ and $s=a,d$.   

The existence of a solution of problem \eqref{weak_sol_1_iter}, \eqref{weak_sol_2_iter}, and  \eqref{init_cond} for given  $(g^\ve, h^\ve)\in \mathcal A$  can be shown using a Galerkin method and  a priori  estimates, equivalent to those stated in \eqref{estim:apriori_1} (where we now consider estimates for the solutions of  problem \eqref{weak_sol_1_iter}, \eqref{weak_sol_2_iter}, and  \eqref{init_cond}). 
As is standard the necessary estimates are derived for Galerkin approximation sequences and passing to the limit yields the estimates for the problem \eqref{weak_sol_1_iter}, \eqref{weak_sol_2_iter}, and  \eqref{init_cond}.  We note that the derivation of the estimates    \eqref{estim:apriori_1}   for solutions  of  problem \eqref{weak_sol_1_iter}, \eqref{weak_sol_2_iter}, and  \eqref{init_cond}  follows exactly the same argument as in the proof of Lemma~\ref{lem:apriori},   with  $\ve\| h^\ve\|^2_{L^2(0, \tau; L^q(\Gamma_\tau^\ve))}$ in place of $\ve\|r_b^\ve\|^2_{L^2(0,\tau; L^q(\Gamma^\ve))}$ for $q=2,4$.

The a priori estimates in \eqref{estim:apriori_1},  together with standard arguments for parabolic equations,  ensure that for any fixed $\ve >0$ we have $\partial_t c^\ve_l \in L^2(0,T; H^{1}(\Omega_l^\ve)^\prime)$ for $l=e,i$  and  $\partial_t r_j^\ve, \partial_t p_s^\ve \in L^2(0,T; H^{1}(\Gamma^\ve)^\prime)$ for  $j=f,b$, $s = a,d$.   

Now using  the compact embedding of $[H^1(0,T; H^{1}(\Gamma^\ve)^\prime) \cap L^2(0,T; H^1(\Gamma^\ve))]^2$ in $L^2(0,T; L^4(\Gamma^\ve))^2$  and the fact that $\mathcal A$ is a convex subset of $L^2(0,T; L^4(\Gamma^\ve))^2$  and applying  the Schauder fixed-point theorem yields the  existence of a weak solution to the microscopic problem   \eqref{main1}--\eqref{bc_cond}  for each fixed $\ve$. 
\end{proof} 

}

\changes{
To show nonnegativity of solutions, the a priori estimates \eqref{estim:apriori_1} and boundedness of solutions of the microscopic problem  \eqref{main1}--\eqref{bc_cond}, we first consider a truncated model obtained by taking  $r_{f,M}^\ve$ instead of $r_f^\ve$ in function $G_e$
and $r_{b,M}^\ve$ instead of $r_b^\ve$ in function $G_d$ in equations  \eqref{main1}--\eqref{bc_cond}, where 
\[
r_{j,M}^\ve := \min\{M, r_{j}^\ve\} + \max\{ -M, r_{j}^\ve\} -r_{j}^\ve, 
\text{ for $j=f,b$ and some  $M>0$}.
\]
Then we show that all solutions of the truncated model are nonnegative. For nonnegative solutions in Lemmata~\ref{lem:apriori} and \ref{lem:boundedness}  we prove the a priori estimates \eqref{estim:apriori_1} and boundedness, independent of the truncation constant $M$. 
Thus passing to the limit as $M \to \infty$ yields the nonnegativity, a priori estimates \eqref{estim:apriori_1} and boundedness of solutions of the original problem  
 \eqref{main1}--\eqref{bc_cond}.  
 
 For simplicity of presentation we derive the a priori estimates and boundedness of nonnegative  solutions of original problem \eqref{main1}--\eqref{bc_cond}, clearly the same arguments apply for the corresponding truncated model.

\begin{lemma} \label{lem:positivity} 
Under Assumptions~\ref{assumption} solutions of  problem  \eqref{main1}--\eqref{bc_cond} are nonnegative.
\end{lemma}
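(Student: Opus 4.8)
The plan is to prove nonnegativity by an energy argument in which each equation is tested against the negative part of its own unknown. Writing $v^{\ve,-}=\min\{v^\ve,0\}$ and $v^{\ve,+}=\max\{v^\ve,0\}$ for any of the six species $v^\ve\in\{c_e^\ve,c_i^\ve,r_f^\ve,r_b^\ve,p_d^\ve,p_a^\ve\}$, I would substitute $v^{\ve,-}$ as test function in the corresponding weak equation and add the resulting identities. Since the initial data in \eqref{init_cond} are nonnegative by Assumption~\ref{assumption}, every negative part vanishes at $t=0$, so the target is a differential inequality $\tfrac{d}{dt}E(t)\le C\,E(t)$ for $E(t)=\|c_e^{\ve,-}\|_{L^2(\Omega_e^\ve)}^2+\|c_i^{\ve,-}\|_{L^2(\Omega_i^\ve)}^2+\|r_f^{\ve,-}\|_{L^2(\Gamma^\ve)}^2+\dots$, whence Gronwall together with $E(0)=0$ forces $E\equiv 0$.

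First I would dispose of the benign contributions. The diffusion terms contribute $\int D\,|\nabla v^{\ve,-}|^2\ge 0$ on the left; the decay terms give $-d_j\|v^{\ve,-}\|^2\le 0$; and the production terms are absorbed directly by the structural hypotheses of Assumption~\ref{assumption}, namely $F_l(\xi)\xi_-\le|\xi_-|^2$ for $l=e,i,d$ and $F_f(\xi,\eta)\xi_-\le C(|\xi_-|^2+|\eta_-|^2)$, which bound the $F$-contributions by $C\,E(t)$. The crux is the coupling through the reaction fluxes $G_e$, $G_d$, $G_i$. Exploiting their bilinear (mass-action) structure, the self-consumption pieces $-a_e^\ve c_e^\ve r_{f,M}^\ve$, $-a_i^\ve r_{b,M}^\ve p_d^\ve$ and the linear pieces $-b_e^\ve r_b^\ve$, $-b_i^\ve p_a^\ve$ are either of favourable sign or, using $|r_{f,M}^\ve|,|r_{b,M}^\ve|\le M$ and the boundedness of $a_l,b_l,\gamma_i,\kappa_i$, are bounded by $C\,E(t)$ after Young's inequality and—for the surface integrals of the bulk quantities $c_e^\ve,c_i^\ve$—the scaled trace inequality \eqref{ineq:trace}; the $G_i$-block is likewise harmless because $\gamma_i,\kappa_i$ enter only with bounded coefficients.

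The genuinely delicate contributions are the two \emph{production} cross-terms: the creation of $r_b^\ve$ in its own equation, $\int_{\Gamma^\ve}a_e^\ve c_e^{\ve,+}r_{f,M}^\ve r_b^{\ve,-}$, and the creation of $p_a^\ve$, $\int_{\Gamma^\ve}a_i^\ve r_{b,M}^\ve p_d^{\ve,+}p_a^{\ve,-}$. These carry the \emph{unbounded} positive parts $c_e^{\ve,+}$, $p_d^{\ve,+}$ and at first sight are only linear in the negative parts. Here the truncation is essential: on $\{r_f^\ve<0\}$ and $\{r_b^\ve<0\}$ one has $|r_{f,M}^\ve|\le|r_f^{\ve,-}|$ and $|r_{b,M}^\ve|\le|r_b^{\ve,-}|$, so these terms are in fact controlled by $\int a_e^\ve c_e^{\ve,+}|r_f^{\ve,-}|\,|r_b^{\ve,-}|$ and $\int a_i^\ve p_d^{\ve,+}|r_b^{\ve,-}|\,|p_a^{\ve,-}|$, i.e.\ they are \emph{quadratic} in the negative parts, with the unbounded factor appearing only as a coefficient.

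The main obstacle is then to absorb these quadratic terms whose coefficients $c_e^{\ve,+}$, $p_d^{\ve,+}$ lie only in $L^2$. I would split the products by Young's inequality and estimate, e.g., $\int_{\Gamma^\ve}a_i^\ve p_d^{\ve,+}(r_b^{\ve,-})^2$ by H\"older followed by a Gagliardo--Nirenberg inequality on $\Gamma^\ve$, in the representative form $\|r_b^{\ve,-}\|_{L^4(\Gamma^\ve)}^2\le C\|r_b^{\ve,-}\|_{L^2(\Gamma^\ve)}\|r_b^{\ve,-}\|_{H^1(\Gamma^\ve)}$, and analogously for the bulk factor $c_e^{\ve,+}$ after passing its trace to $\Gamma^\ve$ via \eqref{ineq:trace}. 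The emerging gradient norm is absorbed into the surface-diffusion terms $\ve^2D_j\|\nabla_\Gamma v^{\ve,-}\|^2$ on the left, which are strictly positive for fixed $\ve$, leaving a bound $g(t)\,E(t)$ with $g(t)=C\big(1+\|p_d^{\ve,+}\|_{L^2(\Gamma^\ve)}^2+\|c_e^{\ve,+}\|_{L^2(\Gamma^\ve)}^2\big)\in L^1(0,T)$, since all species belong to $L^2(0,T;L^2)$. A generalised Gronwall inequality with $E(0)=0$ then yields $E\equiv 0$, hence nonnegativity of every species for the truncated model; passing $M\to\infty$ and using the $M$-independent bounds of Lemmas~\ref{lem:apriori} and \ref{lem:boundedness} gives the claim for \eqref{main1}--\eqref{bc_cond}.
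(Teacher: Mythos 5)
Your proposal is correct and follows essentially the same route as the paper: test the truncated system with the negative parts, use the sign structure of $G_e$, $G_d$, $G_i$ together with the truncation bounds, the scaled trace inequality and Gronwall to force $E\equiv 0$, and then let $M\to\infty$ using the $M$-independent estimates. The paper's own proof is only a brief sketch of this argument, and your handling of the cross-terms with unbounded $L^2$ coefficients (via Gagliardo--Nirenberg and absorption into the $\ve$-dependent surface-diffusion terms, which is legitimate since nonnegativity is proved for each fixed $\ve$) correctly fills in the detail it leaves implicit.
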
 

\begin{proof}
We first  consider the truncated model with   $r_{f,M}^\ve$ instead of $r_f^\ve$ in function $G_e$
and $r_{b,M}^\ve$ instead of $r_b^\ve$ in function $G_d$ in equations  \eqref{main1}--\eqref{bc_cond}.  Then considering $c_l^{\ve, -}$, $r_j^{\ve, -}$, and $p_s^{\ve, -}$ as test functions in equations  \eqref{weak_sol_1} and  \eqref{weak_sol_2} with $G_e(c_e^\ve, r_{f,M}^\ve, r_b^\ve)$ and $G_d(r_{b,M}^\ve, p_d^\ve, p_a^\ve)$ instead of $G_e(c_e^\ve, r_{f}^\ve, r_b^\ve)$ and $G_d(r_{b}^\ve, p_d^\ve, p_a^\ve)$, respectively,  using the trace and Gronwall inequalities we obtain 
$$
\| c_l^{\ve, -}\|_{L^\infty(0,T; L^2(\Omega^\ve_l))} + \| r_j^{\ve, -}\|_{L^\infty(0,T; L^2(\Gamma^\ve))} +  \| p_s^{\ve, -}\|_{L^\infty(0, T; L^2(\Gamma^\ve))} \leq 0, 
$$
for $l=e,i$, $j=f,b$,  and $s=a,d$, and hence  solutions of the truncated problem $c_l^{\ve}$, $r_j^{\ve}$, and $p_s^{\ve}$  are nonnegative.  Since for nonnegative solutions  we have a priori estimates and  boundedness uniformly with respect to $M$, see Lemmata~\ref{lem:apriori} and \ref{lem:boundedness}, we can pass to the limit as $M \to \infty$  and obtain that   solutions of the  original  problem  \eqref{main1}--\eqref{bc_cond} are nonnegative. 
\end{proof}

}

\changes{
Next we   derive the {a priori} estimates \eqref{estim:apriori_1} for  solutions of problem  \eqref{main1}--\eqref{bc_cond}.

\begin{lemma} \label{lem:apriori} 
 Under Assumptions~\ref{assumption} nonnegative   solutions of the microscopic problem  \eqref{main1}--\eqref{bc_cond} satisfy  the a~priori estimates 
 \eqref{estim:apriori_1}. 
\end{lemma}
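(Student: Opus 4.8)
The plan is to establish \eqref{estim:apriori_1} by energy estimates, testing each equation with its own solution, but weighting the four surface equations by a factor $\ve$ so that the resulting norms carry the scaling dictated by $|\Gamma^\ve|\sim\ve^{-1}$. Concretely, I would take $\phi=c_e^\ve$ in the first line of \eqref{weak_sol_1}, $\psi=c_i^\ve$ in the second, and $\varphi=\ve\,r_f^\ve,\ \ve\,r_b^\ve,\ \ve\,p_d^\ve,\ \ve\,p_a^\ve$ successively in \eqref{weak_sol_2}, integrate over $(0,\tau)$ for an arbitrary $\tau\in(0,T]$, and sum. The time-derivative terms yield $\tfrac12\tfrac{d}{dt}$ of the squared $L^2$-norms, producing after integration the quantity $E(\tau):=\|c_e^\ve\|_{L^2(\Omega_e^\ve)}^2+\|c_i^\ve\|_{L^2(\Omega_i^\ve)}^2+\ve\sum_{j=f,b}\|r_j^\ve\|_{L^2(\Gamma^\ve)}^2+\ve\sum_{s=d,a}\|p_s^\ve\|_{L^2(\Gamma^\ve)}^2$ at time $\tau$, which after a supremum gives the $L^\infty(0,T;L^2)$ norms on the left of \eqref{estim:apriori_1}. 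By the ellipticity in Assumption~\ref{assumption}, the diffusion terms reproduce exactly the scaled Dirichlet energies $\|\nabla c_e^\ve\|^2$, $\ve^2\|\nabla c_i^\ve\|^2$, $\ve^3\|\nabla_\Gamma r_j^\ve\|^2$ and $\ve^3\|\nabla_\Gamma p_s^\ve\|^2$.

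Next I would dispose of the easy reaction terms. The production terms contribute only Gronwall data via their linear growth, $\la F_l(u),u\ra\le C(1+\|u\|^2)$ and likewise for $F_f$. Using nonnegativity of all species I discard the manifestly good (nonpositive) contributions: the decay terms $-d_j\|\cdot\|^2$, the self-quadratic terms $-\ve b_e\|r_b^\ve\|^2,\ -\ve b_i\|p_a^\ve\|^2,\ -\ve\kappa_i\|c_i^\ve\|^2$, and the four negative cubic monomials generated by $G_e$ and $G_d$, namely $-\ve a_e\la (c_e^\ve)^2, r_f^\ve\ra_{\Gamma^\ve_\tau}$, $-\ve a_e\la c_e^\ve r_f^\ve, r_f^\ve\ra_{\Gamma^\ve_\tau}$, $-\ve a_i\la (r_b^\ve)^2, p_d^\ve\ra_{\Gamma^\ve_\tau}$ and $-\ve a_i\la r_b^\ve p_d^\ve, p_d^\ve\ra_{\Gamma^\ve_\tau}$. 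The linear coupling $G_i$ and the dissociation parts $b_e r_b^\ve$, $b_i p_a^\ve$ produce only quadratic cross terms, which I split by Young's inequality; every boundary contribution of the form $\ve\|u\|^2_{L^2(\Gamma^\ve)}$ is then transferred to the bulk by the scaled trace inequality \eqref{ineq:trace}, choosing $\delta$ small enough that the accompanying gradient term $\ve^2\delta\|\nabla u\|^2$ is absorbed into the correspondingly scaled diffusion term and only a multiple of $E$ survives.

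The crux, and the step I expect to be the main obstacle, is the pair of genuinely positive cubic terms $+\ve a_e\la c_e^\ve r_f^\ve, r_b^\ve\ra_{\Gamma^\ve_\tau}$ and $+\ve a_i\la r_b^\ve p_d^\ve, p_a^\ve\ra_{\Gamma^\ve_\tau}$, arising from the product species in the $r_b^\ve$ and $p_a^\ve$ equations. Since the three monomials produced by $G_e$ (respectively $G_d$) are linearly independent, no weighting of the test functions cancels them, and they cannot be absorbed into the $L^2$-energy by elementary means. To control them I would invoke the periodic unfolding operator of Appendix~\ref{appendix_A1}: unfolding moves these integrals to the fixed domain $\Omega\times\Gamma$ and, decisively, converts the scaled surface gradient $\ve\nabla_\Gamma$ into the microscopic gradient $\nabla_y$ on the reference surface $\Gamma$, so that the controlled quantity $\ve^3\|\nabla_\Gamma r_j^\ve\|^2$ becomes an honest $L^2(\Omega;H^1(\Gamma))$ bound with $\ve$-independent constant. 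On the fixed surface $\Gamma$, which has dimension $d-1\le 2$, I then apply a Gagliardo--Nirenberg inequality of the form $\|v\|_{L^4(\Gamma)}\le C\|v\|_{H^1(\Gamma)}^{\theta}\|v\|_{L^2(\Gamma)}^{1-\theta}$ and a Hölder splitting (exponents $2,4,4$) in the microscopic variable, with the macroscopic trace of $c_e^\ve$ (whose unscaled gradient bound is available) handled separately.

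Choosing the Young exponents so that only a small multiple of the surface Dirichlet energy $\ve^3\|\nabla_\Gamma r_j^\ve\|^2$ is produced and absorbed on the left, the remainder is bounded by $E(\tau)$ times a coefficient that is a priori in $L^1(0,T)$; this is where the restriction $d\le 3$ (equivalently surface dimension $\le 2$) is used to keep the Gagliardo--Nirenberg exponents subcritical. Collecting all contributions gives a differential inequality of the form $E(\tau)+(\text{scaled gradient norms})\le E(0)+C\int_0^\tau(1+E)\,dt$, to which Gronwall's lemma applies. Since the space-dependent initial data are bounded uniformly in $\ve$ by Assumption~\ref{assumption}, this closes the argument and yields \eqref{estim:apriori_1} with $C$ independent of $\ve$ and, by the same reasoning applied to the truncated system, independent of the truncation parameter $M$.
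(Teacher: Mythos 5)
Your setup is right and you have correctly isolated the real difficulty: after testing each equation with its own (suitably $\ve$-weighted) solution, everything is benign except the two positive cubic terms $+\ve a_e\la c_e^\ve r_f^\ve, r_b^\ve\ra_{\Gamma^\ve_\tau}$ and $+\ve a_i\la r_b^\ve p_d^\ve, p_a^\ve\ra_{\Gamma^\ve_\tau}$. However, your proposed treatment of those two terms does not close. The H\"older $(2,4,4)$ splitting followed by the two--dimensional Gagliardo--Nirenberg inequality $\|v\|_{L^4(\Gamma)}\le C\|v\|_{H^1(\Gamma)}^{1/2}\|v\|_{L^2(\Gamma)}^{1/2}$ writes a degree-three quantity as (degree one in the $H^1$ seminorms) times (degree two in $L^2$); absorbing the $H^1$ part into the quadratic Dirichlet energy by Young's inequality necessarily leaves a remainder of degree four in the $L^2$ norms, of the form $C_\delta\|\T^\ve(c_e^\ve)\|^2_{L^2(\Gamma)}\big(\|\T^\ve(r_f^\ve)\|^2_{L^2(\Gamma)}+\|\T^\ve(r_b^\ve)\|^2_{L^2(\Gamma)}\big)$. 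The resulting differential inequality is of Riccati type, $E'\le C(1+E)E$, which gives only local-in-time or small-data bounds, not the global, $\ve$-uniform estimates \eqref{estim:apriori_1}. Your claim that the Gronwall coefficient is ``a priori in $L^1(0,T)$'' is circular: that coefficient is $\|\T^\ve(c_e^\ve)\|^2_{L^2(\Omega\times\Gamma)}$, whose time-integrability is controlled (via the trace inequality \eqref{ineq:trace}) precisely by the bulk norms you are in the middle of estimating. One could rescue a Gagliardo--Nirenberg route by first establishing the $L^1(\Gamma)$ conservation-type bound (as the paper does later, in \eqref{estim_L1_Gamma}, for the boundedness proof) and interpolating against it to lower the degree, but you do not invoke that.

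The paper avoids the cubic terms altogether by a structural cancellation that your energy functional misses: it tests the equation for the \emph{sum} $r_f^\ve+r_b^\ve$ with $r_f^\ve+r_b^\ve$, so that $G_e$ cancels identically (it enters the two equations with opposite signs), and the surviving nonlinearity $-a_i^\ve r_b^\ve p_d^\ve\,(r_f^\ve+r_b^\ve)$ is nonpositive by the nonnegativity of solutions; the cross gradient term is handled using the previously derived bound \eqref{estim_r_f} for $\nabla_\Gamma r_f^\ve$. The same trick with $p_d^\ve+p_a^\ve$ eliminates $G_d$, leaving only the linear coupling $G_i$. Everything then closes with Young, the scaled trace inequality and Gronwall --- no unfolding or Gagliardo--Nirenberg is needed at this stage. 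I recommend you replace your treatment of the two cubic terms with this summation argument (or supply the missing $L^1$ conservation bound before interpolating).
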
 

\begin{proof} 
Considering $c_e^\ve$ and $c_i^\ve$ as test functions in  the  weak formulation  \eqref{weak_sol_1} of the equations for $c_e^\ve$ and $c_i^\ve$  yields 
\begin{equation}
\begin{aligned} 
\frac 1 2 \|c_e^\ve(\tau) \|^2_{L^2(\Omega_e^\ve)} + \alpha_e \|\nabla  c_e^\ve \|^2_{L^2(\Omega_{e, \tau}^\ve)}  \leq  & \;
 \frac 12 \|c_e^\ve(0) \|_{L^2(\Omega_e^\ve)} +  \la F_e(c^\ve_e) , c_e^\ve\ra_{\Omega_{e, \tau}^\ve}\\
 & - \ve \la G_e (c^\ve_e, r_f^\ve, r_b^\ve), c^\ve_e \ra_{\Gamma_{\tau}^\ve},  \\
\frac 12  \|c_i^\ve(\tau) \|^2_{L^2(\Omega_i^\ve)} +  \alpha_i   \| \ve \nabla  c_i^\ve \|^2_{L^2(\Omega_{i, \tau}^\ve)}  \leq & \; 
\frac 12 \|c_i^\ve(0) \|_{L^2(\Omega_i^\ve)} +  \la F_i(c^\ve_i) , c_i^\ve\ra_{\Omega_{i, \tau}^\ve} \\
&+ \ve \la G_i (p_a^\ve, c^\ve_i), c^\ve_i \ra_{\Gamma_{\tau}^\ve}, 
 \end{aligned} 
\end{equation}
for $\tau \in (0, T]$. Using the structure of $G_e$ and $G_i$, the nonnegativity of solutions,    and the assumptions on the coefficients  in Assumption~\ref{assumption}, together with the  trace inequality \eqref{ineq:trace} we obtain 
\begin{equation}\label{estim_ce_ci} 
\begin{aligned}
\|c_e^\ve(\tau) \|^2_{L^2(\Omega_e^\ve)} +  \|\nabla  c_e^\ve \|^2_{L^2(\Omega_{e, \tau}^\ve)} \leq C\left[1+ \ve \|r_b^\ve\|^2_{L^2(\Gamma^\ve_\tau)} + \| c_e^\ve \|^2_{L^2(\Omega_{e, \tau}^\ve)}  \right], \\
\|c_i^\ve(\tau) \|^2_{L^2(\Omega_i^\ve)} +  \|\ve \nabla  c_i^\ve \|^2_{L^2(\Omega_{i, \tau}^\ve)} \leq C\left[1+ \ve \|p_a^\ve\|^2_{L^2(\Gamma^\ve_\tau)} + \| c_i^\ve \|^2_{L^2(\Omega_{i, \tau}^\ve)}  \right].
\end{aligned} 
\end{equation}
Taking  $r_f^\ve$ as a test function in the equation for $r_f^\ve$, and using  the nonnegativity of $c_e^\ve$, $r_f^\ve$ and $r_b^\ve$, the structure of $G_e$, and the assumptions on $F_f$ we have 
\begin{equation}\label{estim_r_f}
\ve \|r_f^\ve(\tau) \|^2_{L^2(\Gamma^\ve)} +\ve   \| \ve \nabla_\Gamma r_f^\ve\|^2_{L^2(\Gamma_\tau^\ve)}   \leq  C\left[1+ \ve \|r^\ve_b\|^2_{ L^2(\Gamma_\tau^\ve)} +  \ve \| r_f^\ve\|^2_{L^2(\Gamma_\tau^\ve)} \right],
\end{equation}
for  $\tau \in (0, T]$.  
Considering  the equation for the sum of $r_b^\ve$ and $r_f^\ve$,  taking $r_b^\ve+ r_f^\ve$ as a test function, and using the structure of the function $G_d$, together with the nonnegativity of  $r_f^\ve$, $r_b^\ve$, and $p_d^\ve$ and the estimate \eqref{estim_r_f},  yields 
\begin{equation}\label{estim_rf_rb}
\begin{aligned}
\ve \|r_f^\ve(\tau) +r_b^\ve(\tau)\|^2_{L^2(\Gamma^\ve)} & \, +  \ve \|\ve \nabla_\Gamma r_b^\ve\|^2_{L^2(\Gamma_\tau^\ve)}  \leq C_1 \big[ 1+   \ve \|\ve \nabla_\Gamma r_f^\ve\|^2_{L^2(\Gamma_\tau^\ve)} \\ 
& +\ve \|r_f^\ve \|^2_{L^2(\Gamma^\ve_\tau)}    + \ve \|r_b^\ve \|^2_{L^2(\Gamma^\ve_\tau)} + \ve \|p_a^\ve \|^2_{L^2(\Gamma^\ve_\tau)} \big] \\
& \leq  C_2 \left[1+ \ve \|r^\ve_b\|^2_{ L^2(\Gamma_\tau^\ve)} + \ve \|r^\ve_f\|^2_{ L^2(\Gamma_\tau^\ve)} + \ve \|p_a^\ve \|^2_{L^2(\Gamma^\ve_\tau)} \right], 
\end{aligned}
\end{equation}
for  $\tau \in (0, T]$.  
In a similar way as for $r_f^\ve$,  using the structure of $G_d$, the assumptions on $F_d$, and the nonnegativity of $r_b^\ve$, $p_d^\ve$ and $p_a^\ve$,  we obtain
\begin{equation}\label{estim_p_d} 
\ve \|p_d^\ve(\tau) \|^2_{L^2(\Gamma^\ve)} +  \ve   \| \ve \nabla_\Gamma p_d^\ve\|^2_{L^2(\Gamma_\tau^\ve)}   \leq  C \left[1+ \ve \|p^\ve_a\|^2_{ L^2(\Gamma_\tau^\ve)} +  \ve  \| p_d^\ve\|^2_{L^2(\Gamma_\tau^\ve)}  \right],
\end{equation}
for $\tau \in (0,T]$. 
Considering the equation for the sum of $p_d^\ve$ and $p_a^\ve$ and taking $p_d^\ve+ p_a^\ve$ as a test function yields
\begin{equation}\label{estim_pd_pa}
\begin{aligned}
\ve \|p_d^\ve(\tau)+p_a^\ve(\tau)\|^2_{L^2(\Gamma^\ve)} + \ve \|\ve \nabla_\Gamma p_a^\ve\|^2_{L^2(\Gamma_\tau^\ve)}   \leq C_1 \big[1+  \ve  \|\ve \nabla_\Gamma p_d^\ve\|^2_{L^2(\Gamma_\tau^\ve)} \\
+ \ve \|p_a^\ve \|^2_{L^2(\Gamma^\ve_\tau)}  + \ve  \|p_d^\ve \|^2_{L^2(\Gamma^\ve_\tau)} + \ve \|c_i^\ve \|^2_{L^2(\Gamma^\ve_\tau)} \big] \\
  \leq C_2\left[1+ \ve \|p_a^\ve \|^2_{L^2(\Gamma^\ve_\tau)} + \ve  \|p_d^\ve \|^2_{L^2(\Gamma^\ve_\tau)} + \ve \|c_i^\ve \|^2_{L^2(\Gamma^\ve_\tau)} \right],  
\end{aligned}
\end{equation}  
for  $\tau \in (0, T]$.   Combining estimates \eqref{estim_ce_ci}--\eqref{estim_pd_pa} and using  the Gronwall inequality and trace inequality \eqref{ineq:trace},   imply  the a priori estimates  stated  in \eqref{estim:apriori_1}. 
\end{proof}

The main technical result of this section is the following uniform boundedness result. A number of the more laborious calculations are given in the Appendix~\ref{boundedness_sketch} in order to aid readability of the manuscript.

\begin{lemma} \label{lem:boundedness} 
Under Assumptions~\ref{assumption} nonnegative solutions of  problem  \eqref{main1}--\eqref{bc_cond} are bounded, uniformly in $\ve$.
\end{lemma}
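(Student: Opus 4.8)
The plan is to establish the uniform $L^\infty$ bounds \eqref{estim:bound} by a Moser--Alikakos iteration, carried out on all species simultaneously and, for the surface unknowns and for $c_i^\ve$, on the \emph{unfolded} equations. The role of the unfolding is to cure the degeneracy of the diffusion: the scaling $\ve^2 D \Delta_\Gamma$ on the $\ve$-periodic surface $\Gamma^\ve$, and $\ve^2\nabla\cdot(D_i^\ve\nabla)$ on $\Omega_i^\ve$, transform under the unfolding operator into operators of order one in the microscopic variable, namely $D\Delta_\Gamma$ on the fixed manifold $\Gamma$ and $\nabla_y\cdot(D_i\nabla_y)$ on $Y_i$. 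The unfolded unknowns thus solve reaction--diffusion equations on the fixed, $\ve$-independent domains $\Gamma$ and $Y_i$ (parametrised by $x\in\Omega$), which supplies exactly the dissipation, and the Gagliardo--Nirenberg interpolation on a fixed compact domain, that a Moser iteration requires. The extracellular ligand $c_e^\ve$ has macroscopic diffusion and is kept on $\Omega_e^\ve$, where the $H^1$-extension of Remark~\ref{rem:extension} and the scaled trace inequality \eqref{ineq:trace} handle its boundary flux.

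First I would test each equation with the $(q-1)$-st power of the corresponding nonnegative unknown (with an $\ve$-weight on the bulk quantities matching the scaling in \eqref{estim:apriori_1}) and sum. The decay terms and the diagonal parts of $G_e,G_d,G_i$ contribute good sinks, and the production terms $F_l,F_f$ are controlled by their at most linear growth from Assumption~\ref{assumption}. The dangerous contributions are the two quadratic sources $a_e^\ve c_e^\ve r_f^\ve$ (entering the $r_b^\ve$-estimate) and $a_i^\ve r_b^\ve p_d^\ve$ (entering the $p_a^\ve$-estimate), each a product of two unknowns. Here I would exploit the conservation structure of the kinetics: each such product appears, with opposite sign, in the equation of its reaction partner ($r_f^\ve$, respectively $p_d^\ve$). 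Splitting the source by Young's inequality, one part is absorbed into the partner's sink, $-a_e^\ve c_e^\ve (r_f^\ve)^q$ respectively $-a_i^\ve r_b^\ve (p_d^\ve)^q$, while the remainder is a subcritical term of degree $q+1$ that, after the Gagliardo--Nirenberg inequality, is controlled by the dissipation together with a lower-order $L^q$-quantity. The strictly linear bulk--surface exchange terms arising from $G_e$ and $G_i$ are paired with conservation-partner sinks as well and are absorbed using \eqref{ineq:trace}, the $\ve^2$-weight in front of the bulk gradient compensating the $\ve$-weight in the trace inequality.

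The coupled energy inequality obtained this way is then closed by interpolation. On the fixed domains $\Gamma$ (of dimension $d-1$) and $Y_i$ the Gagliardo--Nirenberg inequality bounds the $L^{2\theta}$-norm of $u^{q/2}$ by the product of the dissipation and a lower power of its $L^2$-norm, which yields a Moser-type recursion $\Phi(q_{k+1}) \le (C b^{\,k})^{1/q_k}\big(\Phi(q_k)+1\big)$ for the combined quantity $\Phi(q)$, the sum of the $L^\infty(0,T;L^q)$-norms of all species, along a geometric sequence $q_k\to\infty$, with $C$ and $b$ independent of $\ve$. The base case $q_0=2$ is supplied by the a priori estimates \eqref{estim:apriori_1} of Lemma~\ref{lem:apriori}, which after unfolding are precisely $\ve$-uniform $L^2$-bounds on the unfolded functions. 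Iterating and letting $k\to\infty$ then gives the uniform bound \eqref{estim:bound}.

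The step I expect to be the main obstacle is securing this recursion with constants that are genuinely independent of $\ve$. The delicate points are the two quadratic sources, whose Young-splitting must be arranged so that the absorbed part does not exhaust the partner sink and the leftover remains subcritical, and the bulk--surface exchange terms, where the mismatch between the $\ve$-weight in \eqref{ineq:trace} and the $\ve^2$ in front of the bulk gradients must be balanced so that no negative power of $\ve$ accumulates as $q_k\to\infty$. This is exactly where the separation of scales furnished by the unfolding operator, and the higher microscopic regularity of the surface species it exposes, are indispensable; the more laborious of the resulting estimates are collected in Appendix~\ref{boundedness_sketch}.
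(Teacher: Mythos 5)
Your toolkit is the right one and matches the paper's: unfolding to transfer the surface and intracellular equations to the fixed domains $\Gamma$ and $Y_i$ (so that the $\ve^2$-scaled diffusion becomes an order-one operator in $y$ and Gagliardo--Nirenberg is available on an $\ve$-independent compact set), the scaled trace inequality \eqref{ineq:trace} for the bulk--surface exchange, the $L^2$ base case from Lemma~\ref{lem:apriori}, and an Alikakos iteration. Where you diverge is the organisation: you propose a single simultaneous estimate for all species, defusing the quadratic sources $a_e^\ve c_e^\ve r_f^\ve$ and $a_i^\ve r_b^\ve p_d^\ve$ by Young-splitting them into the sinks of their reaction partners. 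This is where I see a genuine gap. Splitting $a_e\,\T^\ve(c_e)\T^\ve(r_f)|\T^\ve(r_b)|^{q-1}\le \tfrac{a_e}{q}\T^\ve(c_e)|\T^\ve(r_f)|^{q}+a_e\tfrac{q-1}{q}\T^\ve(c_e)|\T^\ve(r_b)|^{q}$ does absorb the first piece into the $r_f$-sink, but the leftover $\T^\ve(c_e)|\T^\ve(r_b)|^{q}$ still carries the \emph{unbounded unknown} $\T^\ve(c_e)$ as a coefficient. It is not a harmless subcritical term: estimating it by H\"older against $\|\T^\ve(c_e)\|_{L^4(\Gamma)}$ requires a bound on the trace of $\T^\ve(c_e)$ that is uniform in the macroscopic variable $x$, whereas the only available control, \eqref{eq:unfold_domain}, is an $x$-\emph{integrated} bound; and splitting further by Young raises the power to $q+1$ on $c_e$, which mismatches the dissipation $\|\nabla|c_e^\ve|^{q/2}\|^2$ available from the $c_e$-equation. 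So the recursion does not close as described at precisely the $c_e$--$r_b$ coupling.

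The paper avoids this by ordering the iteration rather than symmetrising it. First $\T^\ve(r_f)$ and $\T^\ve(p_d)$ are bounded in $L^\infty$: in their equations the quadratic terms appear only as sinks, and the remaining sources are \emph{linear} in partners ($b_e r_b$, $b_i p_a$) for which the $x$-uniform $L^4(\Gamma_\tau)$ bounds \eqref{estim_GN_rpc} already hold, giving \eqref{rf_bound} and \eqref{pd_bound}. Next $c_e^\ve$ is bounded on $\Omega_e^\ve$ via a two-way coupling: the auxiliary estimate \eqref{estim_rbL41} expresses $\|\T^\ve(r_b)(\tau)\|_{L^4(\Gamma)}$ in terms of $\|\T^\ve(c_e)\|_{L^p(\Gamma_\tau)}$ (using the already-established boundedness of $r_f^\ve$), and this is fed back into the $|c_e^\ve|^{p-1}$-estimate through the trace and extension inequalities to produce the closed recursion \eqref{estim_cp_2}. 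Only then are $r_b^\ve$, and finally $p_a^\ve$ together with $c_i^\ve$, iterated -- at which point every quadratic source contains at least one factor already known to be bounded, so it is effectively linear. If you want to salvage a simultaneous scheme you would need to build this hierarchy into the functional $\Phi(q)$ anyway; as written, the claim that the Young remainder is ``controlled by the dissipation together with a lower-order $L^q$-quantity'' is exactly the step that fails for the $c_e$-dependent terms.
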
 

\begin{proof}
To show boundedness of solutions to the  microscopic model \eqref{main1}--\eqref{bc_cond} we introduce the  periodic unfolding operator $\T^\ve_{Y_l}: L^p(\Omega^\ve_{l, T})\to L^p(\Omega_T\times Y_l)$,  with $l=i,e$,  and the boundary unfolding operator $\T^\ve_\Gamma: L^p(\Gamma^\ve_T)\to L^p(\Omega_T\times \Gamma)$, where $1\leq p \leq \infty$, see Appendix \ref{appendix_A1} or e.g.\ \cite{Cioranescu_2012, Cioranescu_2008}  for the definition and properties of the periodic unfolding operator.
For simplification of the presentation we use the same notation $\T^\ve$ for the unfolding operator $\T^\ve_{Y_l}$, for $l=e,i$,   and the boundary unfolding operator $\T^\ve_\Gamma$ as it is clear from the context which operator is applied. 

Integration by parts in time of the terms in  equations    \eqref{weak_sol_1} and \eqref{weak_sol_2} that involve time derivatives,   applying the unfolding operator  and using the nonnegativity of the solutions, we obtain the following estimates for $x\in \Omega$, (to aid readability of the manuscript the details of the derivation of the estimates are given in Appendix \ref{boundedness_sketch})
\begin{equation}
\begin{split}
\label{estim_pdaci_L2}
 \|\T^\ve (r_f^\ve) (\tau) \|^2_{L^2(\Gamma)} +& \| \nabla_{\Gamma, y} \T^\ve (r_f^\ve)\|^2_{L^2(\Gamma_\tau)} 
 + \|\T^\ve (r_b^\ve)(\tau)\|^2_{L^2(\Gamma)} + \| \nabla_{\Gamma, y} \T^\ve( r_b^\ve)\|^2_{L^2(\Gamma_\tau) } \\
   &\leq    C \left[ 1
 + \|\T^\ve (r_b^\ve)\|^2_{L^2(\Gamma_\tau)} +  \|\T^\ve (r_f^\ve) \|^2_{L^2(\Gamma_\tau) } +   \|\T^\ve (p_a^\ve) \|^2_{L^2(\Gamma_\tau)} \right]    \\
  \|\T^\ve (p_d^\ve)(\tau) \|^2_{L^2(\Gamma)} +& \| \nabla_{\Gamma, y} \T^\ve (p_d^\ve)\|^2_{L^2(\Gamma_\tau)}
+  \|\T^\ve (p_a^\ve)(\tau)\|^2_{L^2(\Gamma)} + \| \nabla_{\Gamma, y} \T^\ve (p_a^\ve)\|^2_{L^2(\Gamma_\tau)}
 \\
  &\leq C_1\big[ 1+ \|\T^\ve (p_a^\ve)\|^2_{L^2(\Gamma_\tau)}  + \|\T^\ve (p_d^\ve)\|^2_{L^2(\Gamma_\tau)}+ \|\T^\ve (c_i^\ve)\|^2_{L^2(\Gamma_\tau)}\big],  \\
  \|\T^\ve (c_i^\ve)(\tau)\|^2_{L^2(Y_i)}  + & \| \nabla_{y} \T^\ve (c_i^\ve)\|^2_{L^2(Y_{i,\tau})} 
  \\ 
  \leq C_3 &\left[1+  \|\T^\ve (c_i^\ve)\|^2_{L^2(\Gamma_\tau)}   \right]+ C_4\big[ \|\T^\ve (c_i^\ve)\|^2_{L^2(Y_{i,\tau})}  +   \|\T^\ve (p_a^\ve)\|^2_{L^2(\Gamma_\tau)}\big].
\end{split}
\end{equation}
 Gronwall's inequality and a trace estimate for $c_i^\ve$, similar to \eqref{ineq:trace}, yields for $x\in \Omega$,
\begin{equation}\label{estim_rfb}
\begin{aligned}
& \|\T^\ve (r_l^\ve)\|_{L^\infty(0,T; L^2(\Gamma))} + \| \nabla_{\Gamma, y} \T^\ve (r_l^\ve)\|_{L^2(\Gamma_T)} \leq C, \quad \text{ for } \; l=f,b, \\
 & \|\T^\ve (p_s^\ve)\|_{L^\infty(0, T; L^2(\Gamma))} + \| \nabla_{\Gamma, y} \T^\ve (p_s^\ve)\|_{L^2(\Gamma_T)} \leq C, \quad \text{ for } \; s=d,a, \\
  & \|\T^\ve (c_i^\ve)\|_{L^\infty(0, T; L^2(Y_i))} + \| \nabla_y \T^\ve (c_i^\ve)\|_{L^2(Y_{i,T})} \leq C.
 \end{aligned} 
\end{equation} 
 Applying the Gagliardo-Nirenberg and trace inequalities and using the fact that ${\rm dim}(\Gamma) \leq  2$ we obtain for $l=f,b$ and $s=d,a$, and for $x\in \Omega$ and $\tau \in (0, T]$,
\begin{equation}\label{estim_GN_rpc}
\begin{aligned}
& \|\T^\ve (r_l^\ve)\|_{L^4(\Gamma_\tau)}\leq  C_1 \| \nabla_{\Gamma, y} \T^\ve (r_l^\ve)\|^{1/2}_{L^2(\Gamma_\tau)} \| \T^\ve (r_l^\ve)\|^{1/2}_{L^\infty(0,\tau; L^2(\Gamma))}  \leq C, \\
 & \|\T^\ve (p_s^\ve)\|_{L^4(\Gamma_\tau)} \leq C_2 \| \nabla_{\Gamma, y} \T^\ve (p_s^\ve)\|^{1/2}_{L^2(\Gamma_\tau)}\| \T^\ve (p_s^\ve)\|^{1/2}_{L^\infty(0,\tau; L^2(\Gamma))}  \leq C, \\
  & \|\T^\ve (c_i^\ve)\|_{L^2(\Gamma_\tau)} \leq C_3\left[ \| \nabla_{y} \T^\ve (c_i^\ve)\|_{L^2(Y_{i,\tau})} + \| \T^\ve (c_i^\ve)\|_{L^2(Y_{i,\tau})}  \right]\leq C, \\
   & \|\T^\ve (c_i^\ve)\|_{L^2(0,\tau; L^4(\Gamma))} \leq C_4\left[ \| \nabla_{y} \T^\ve (c_i^\ve)\|_{L^2(Y_{i,\tau})} + \| \T^\ve (c_i^\ve)\|_{L^2(Y_{i,\tau})}  \right]\leq C, 
 \end{aligned} 
\end{equation} 
where the constant $C$ is independent of $\ve$  and $x$. We also make use of the inequality    
\begin{equation} \label{estim_GN_1}
\begin{aligned} 
& \|\T^\ve(r_b^\ve)\|_{L^2(\Gamma)}  \leq \mu \|\nabla_{\Gamma, y} \T^\ve(r_b^\ve)\|^{1/2}_{L^2(\Gamma)} \|\T^\ve(r_b^\ve)\|^{1/2}_{L^1(\Gamma)} , \\
& \|\T^\ve(r_b^\ve)\|_{L^4(\Gamma)}  \leq \mu \|\nabla_{\Gamma, y} \T^\ve(r_b^\ve)\|^{1/2}_{L^2(\Gamma)} \|\T^\ve(r_b^\ve)\|^{1/2}_{L^2(\Gamma)} , 
\end{aligned} 
\end{equation}
for $x\in \Omega$, $t \in (0,T]$, and a constant $\mu>0$ independent of $\ve$. 

The   a priori estimates \eqref{estim:apriori_1} and the properties of the unfolding operator, see Appendix~\ref{appendix_A1} or \cite{Cioranescu_2012}  for more details,   imply 
\begin{equation}
\begin{aligned} 
\|\nabla_y \T^\ve(c^\ve_e)\|_{L^2(\Omega_T\times Y_e)} =  \| \ve \T^\ve(\nabla c^\ve_e)\|_{L^2(\Omega_T\times Y_e)} \leq 
C_1 \| \ve \nabla c^\ve_e\|_{L^2(\Omega^\ve_{e, T})}  \leq C_2  \ve , \\
 \|\nabla_y \T^\ve(c^\ve_i)\|_{L^2(\Omega_T\times Y_i)} =  \| \ve \T^\ve(\nabla c^\ve_i)\|_{L^2(\Omega_T\times Y_i)}
 \leq C_3 \| \ve  \nabla c^\ve_i\|_{L^2(\Omega_{i,T}^\ve)}  \leq C_4 . 
 \end{aligned} 
\end{equation}
Then using the Sobolev embedding theorem, where $\text{dim} (Y_l)\leq 3$ for $l=e,i$,    and the trace inequality we obtain 
\begin{equation}\label{eq:unfold_domain}
\begin{aligned} 
 \|\T^\ve(c^\ve_l)\|_{L^2(\Omega_T\times \Gamma)} +
 \|\T^\ve(c^\ve_l)\|_{L^2(\Omega_T; L^4( \Gamma))} +  \|\T^\ve(c^\ve_l)\|_{L^2(\Omega_T; L^4(Y_l))} 
  \\
\leq  \mu \left[\|\T^\ve(c^\ve_l)\|_{L^2(\Omega_T\times Y_l)}  + \|\nabla_y \T^\ve(c^\ve_l)\|_{L^2(\Omega_T\times Y_l)} \right]\leq C, 
 \end{aligned} 
\end{equation}
for $l = e, i$, where the constants $\mu>0$ and $C>0$ are independent of $\ve$. 

We now  use an Alikakos iteration method \cite{Alikakos_1979} to prove  the boundedness of solutions to  \eqref{main1}--\eqref{bc_cond}. 
Considering first  $|\T^\ve( r_f^\ve)|^{p-1}$, for $p\geq 4$, as a test function in the equation  for $\T^\ve( r_f^\ve)$,  (see \eqref{eq:unfold_boundary} in Appendix \ref{boundedness_sketch}), using the assumptions on the function $F_f$,  the nonnegativity of $r_f^\ve$, $r_b^\ve$ and $c_e^\ve$,   and  the Gagliardo-Nirenberg inequality  we obtain the following estimate for $x\in \Omega$ and $\tau\in (0, T]$, (see Appendix \ref{boundedness_sketch} for the details)
 \begin{equation}\label{rf_bound_1}
\begin{aligned} 
  \|\T^\ve (r_f^\ve)(\tau) \|^p_{L^p(\Gamma)}&  +   \|\nabla_{\Gamma, y} |\T^\ve (r_f^\ve)|^{\frac p 2} \|^2_{L^2(\Gamma_\tau)}\\ 
&\leq  C_1 \|\T^\ve (r_b^\ve)\|^p_{L^4(\Gamma_\tau)}   + 
 C_2 p^4\left( \sup\limits_{(0,\tau)} \| |\T^\ve (r_f^\ve)|^{\frac p2}\|_{L^1(\Gamma)}^{2} + 1\right).
 \end{aligned}
 \end{equation}
Then, the Alikakos iteration  Lemma~\cite{Alikakos_1979} ensures that for $x\in \Omega$
\begin{equation}\label{rf_bound}
\begin{aligned} 
& \|\T^\ve( r_f^\ve) \|_{L^\infty(0,T; L^\infty(\Gamma))} \leq C_1\left[1+ \|\T^\ve (r_b^\ve) \|_{L^4(\Gamma_T)} \right]\leq C_2.
\end{aligned}
\end{equation}
 The definition of the unfolding operator and the fact that $C_2$ is independent of $x\in \Omega$ yields the boundedness of $r_f^\ve$ in $(0,T)\times \Gamma^\ve$.   
Due to the structure of the reaction terms,  in the same way as for $\T^\ve(r_f^\ve)$ we  obtain 
\begin{equation}\label{pd_bound}
\begin{aligned} 
& \|\T^\ve( p_d^\ve) \|_{L^\infty(0,T; L^\infty(\Gamma))} \leq C_1\left[1+ \|\T^\ve (p_a^\ve) \|_{L^4(\Gamma_T)} \right] \leq C _2, 
\end{aligned}
\end{equation}
for $x\in \Omega$.
To show the boundedness of $c^\ve_e$ we consider   $|c^\ve_e|^{p-1}$, for $p\geq 4$,  as a test function in the first equation of \eqref{weak_sol_1}  and, using the assumptions on $F_e$ and the nonnegativity of $r_f^\ve$ and $c^\ve_e$ we obtain 
\begin{equation}\label{estim_c_p}
\begin{aligned} 
&  \|c^\ve_e(\tau)\|^p_{L^p(\Omega_e^\ve)} +4 \frac{p-1}{p} \|\nabla |c^\ve_e|^{\frac p 2}\|^2_{L^2(\Omega_{e, \tau}^\ve)} 
\leq  C_1 p \, [1+ \|c^\ve_e\|^p_{L^p(\Omega_{e, \tau}^\ve)} ]  
\\ 
 & +    C_\delta p^3   \|c^\ve_e\|^{p}_{L^p(\Omega_{e, \tau}^\ve)} +  \delta\frac{p-1}p  \|\nabla |c^\ve_e|^{\frac p 2} |^{2}_{L^2(\Omega_{e, \tau}^\ve)} +   \int_{\Omega_\tau}  \|\T^\ve(r^\ve_b)\|^p_{L^4(\Gamma)} dxdt  .
 \end{aligned}
\end{equation}
Using the boundedness of $\T^\ve(r_f^\ve)$  and taking $ |\T^\ve(r_b^\ve)|^3$ as a test function in the equation for $\T^\ve(r_b^\ve)$ (see \eqref{eq:unfold_boundary} in Appendix \ref{boundedness_sketch})  yields
\begin{equation}\label{estim_rbL41}
\begin{aligned} 
\| \T^\ve(r_b^\ve)(\tau) \|_{L^4(\Gamma)} + \|\nabla_{\Gamma, y} |\T^\ve(r_b^\ve)|^2 \|^{\frac 12}_{L^2(\Gamma_\tau)}
 \leq  C[1+ \|\T^\ve(c_e^\ve)\|_{L^p(\Gamma_\tau)}], 
\end{aligned} 
\end{equation}
for $\tau \in (0, T]$ and $x\in \Omega$. Combining the estimates  \eqref{estim_c_p} and \eqref{estim_rbL41} with a trace inequality and a Gagliardo-Nirenberg inequality, applied to the extension of $c^\ve_e$ from $\Omega_e^\ve$ into $\Omega$, (see Appendix~\ref{boundedness_sketch} for more details) yields  
  \begin{equation}\label{estim_cp_2}
  \begin{aligned} 
 \||c^\ve_e(\tau) |^{\frac p2}\|^2_{L^2(\Omega_e^\ve)} + \ \|\nabla |c^\ve_e|^{\frac p 2}\|^2_{L^2(\Omega_{e, \tau}^\ve)} 
\leq   C_\delta p^{8}  \big[\sup\limits_{(0,\tau)}\||c^\ve_e|^{\frac p2}\|^2_{L^1(\Omega_{e}^\ve)}+1\big].
\end{aligned}
\end{equation}
Then the iteration over $p$, similar to \cite{Alikakos_1979}, yields the boundedness of $c^\ve_e$ in $\Omega^\ve_{e,T}$. Since $c^\ve_e \in L^2(0,T; H^1(\Omega_e^\ve))$ we also have the boundedness of $c^\ve_e$ on  $(0,T)\times \Gamma^\ve$, see e.g.\ \cite{Ptashnyk_2012}. 

To  show boundedness of $r_b^\ve$  we consider $|\T^\ve(r_b^\ve)|^{p-1}$  as a test function in the equation for $\T^\ve(r_b^\ve)$  (equation  \eqref{eq:unfold_boundary} in Appendix \ref{boundedness_sketch})  and using the boundedness  of $\T^\ve(r_f^\ve)$ we obtain 
$$
\begin{aligned} 
\|\T^\ve(r_b^\ve(\tau))\|^p_{L^p(\Gamma)} + 4 \frac{ p-1}p\|\nabla |\T^\ve(r_b^\ve)|^{\frac p 2} \|^2_{L^2(\Gamma_\tau)}
\leq C_1p^4[1+\sup\limits_{(0,\tau)} \||\T^\ve(r_b^\ve)|^{\frac p2}\|^2_{L^1(\Gamma)} ] \\
 + C_2\big[\|\T^\ve(c_e^\ve)\|^p_{L^4(\Gamma_\tau)}+  \|\T^\ve(p_a^\ve)\|^p_{L^4(\Gamma_\tau)}\big],  
\end{aligned}
$$
for $x\in \Omega$ and $\tau \in (0, T]$. 
The  iteration over $p$,  boundedness of $c^\ve_e$,  and estimate \eqref{estim_GN_rpc} for $\T^\ve(p_a^\ve)$ ensure  boundedness of $\T^\ve(r_b^\ve)$ in $\Omega_T\times \Gamma$ and hence the boundedness of $r^\ve_b$ in $(0,T)\times \Gamma^\ve$. 
 
 Taking $|\T^\ve( p_a^\ve)|^{p-1}$ as a   test function in the equation for $\T^\ve( p_a^\ve)$, (equation  \eqref{eq:unfold_boundary}  in Appendix \ref{boundedness_sketch})   and using the boundedness of $\T^\ve( p_d^\ve)$  yield
 $$
\begin{aligned} 
\|\T^\ve(p_a^\ve(\tau))\|^p_{L^p(\Gamma)} + 4 \frac{ p-1}p\|\nabla |\T^\ve(p_a^\ve)|^{\frac p 2} \|^2_{L^2(\Gamma_\tau)}
\leq  C_1p^4[1+ \sup\limits_{(0,\tau)}\||\T^\ve(p_a^\ve)|^{\frac p2}\|^2_{L^1(\Gamma)} ] 
\\ +C_2\sup\limits_{(0,\tau)}\||\T^\ve(c_i^\ve)|^{\frac p 2}\|^2_{L^1(Y_{i})} + \delta \|\nabla_{\Gamma, y} |\T^\ve(c_i^\ve)|^{\frac p 2} \|^2_{L^2(Y_{i, \tau})} +  C_3\|\T^\ve(r_b^\ve)\|^p_{L^4(\Gamma_\tau)},    
\end{aligned}
$$
for $x\in \Omega$ and $\tau \in (0, T]$.  
 Similarly considering  $|\T^\ve(c_i^\ve)|^{p-1}$ as a  test function  in the equation for  $\T^\ve(c_i^\ve)$,  (see \eqref{eq:unfolding_c_i} in Appendix~\ref{boundedness_sketch}),  gives
$$
\begin{aligned} 
\|\T^\ve(c_i^\ve(\tau))\|^p_{L^p(Y_i)} + 4 \frac{ p-1}p\|\nabla |\T^\ve(c_i^\ve)|^{\frac p 2} \|^2_{L^2(Y_{i, \tau})}
\leq C_\delta p^4[1+ \sup\limits_{(0,\tau)} \||\T^\ve(c_i^\ve)|^{\frac p 2} \|^2_{L^1(Y_i)} ] \\ + 
\delta \|\nabla_{\Gamma, y} |\T^\ve(c_i^\ve)|^{\frac p 2} \|^2_{L^2(Y_{i, \tau})} 
+C_\delta \sup\limits_{(0,\tau)}\||\T^\ve(p_a^\ve)|^{\frac p 2}\|^2_{L^1(\Gamma)} + \delta \|\nabla |\T^\ve(p_a^\ve)|^{\frac p 2} \|^2_{L^2(\Gamma_\tau)}, 
\end{aligned}
$$
for $x\in \Omega$ and $\tau \in (0,T]$.  Adding the last two inequalities, using the boundedness of  $ \|\T^\ve(r_b^\ve)\|_{L^4(\Gamma_\tau)} \leq C$, for $x\in\Omega$ and $\tau \in (0, T]$, and iterating over $p$  we obtain the boundedness of $\T^\ve(p_a^\ve)$ in $\Omega_T\times \Gamma$  and of   $\T^\ve(c_i^\ve)$ in $\Omega_T \times Y_i$. This also ensures the boundedness of $p_a^\ve$ in $(0,T)\times \Gamma^\ve$ and of $c_i^\ve$ in $(0,T)\times \Omega_i^\ve$ and  $(0,T)\times \Gamma^\ve$.  
\end{proof}

\begin{lemma}\label{lem:uniqueness}
The solution to the microscopic problem~\eqref{main1}--\eqref{bc_cond} is unique.
\end{lemma}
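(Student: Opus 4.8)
Suppose, toward proving uniqueness, that $(c_e^{\ve,1}, c_i^{\ve,1}, r_f^{\ve,1}, r_b^{\ve,1}, p_d^{\ve,1}, p_a^{\ve,1})$ and the analogous tuple carrying superscript $2$ are two weak solutions of \eqref{main1}--\eqref{bc_cond} sharing the initial data \eqref{init_cond}. Writing $\hat c_l^\ve = c_l^{\ve,1} - c_l^{\ve,2}$, $\hat r_j^\ve = r_j^{\ve,1} - r_j^{\ve,2}$ and $\hat p_s^\ve = p_s^{\ve,1} - p_s^{\ve,2}$ for $l=e,i$, $j=f,b$, $s=d,a$, I would subtract the two copies of the weak formulations \eqref{weak_sol_1}--\eqref{weak_sol_2} to obtain the system satisfied by the differences, all with vanishing initial data. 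The guiding idea is that the uniform boundedness established in Lemma~\ref{lem:boundedness} upgrades the local Lipschitz continuity of the reaction terms (Assumption~\ref{assumption}) to a genuine Lipschitz property on the range in which all six species take values.

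Next I would test each difference equation with its own difference ($\hat c_e^\ve$ in the $c_e$-equation, $\hat r_f^\ve$ in the $r_f$-equation, and so on), integrate over $(0,\tau)$, and estimate the right-hand sides. The bilinear reaction contributions are handled by the elementary identity
\[
a_e^\ve\big(c_e^{\ve,1} r_f^{\ve,1} - c_e^{\ve,2} r_f^{\ve,2}\big) = a_e^\ve\big(c_e^{\ve,1}\hat r_f^\ve + r_f^{\ve,2}\hat c_e^\ve\big),
\]
and analogously for the $G_d$ and $G_i$ terms; since $c_e^{\ve,1}$, $r_f^{\ve,2}$ and all other species are bounded in $L^\infty$ by Lemma~\ref{lem:boundedness}, each such difference is dominated pointwise by a constant times the moduli of the relevant differences. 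The production terms $F_e, F_i, F_d, F_f$ are Lipschitz on the range of the solutions by Assumption~\ref{assumption}, yielding the same type of bound. The only genuinely delicate terms are the boundary couplings $\ve\la G_e(\dots),\hat c_e^\ve\ra_{\Gamma_\tau^\ve}$ and $\ve\la G_i(\dots),\hat c_i^\ve\ra_{\Gamma_\tau^\ve}$, which link the bulk unknowns to the surface unknowns: these I would control via the scaled trace inequality \eqref{ineq:trace}, bounding $\ve\|\hat c_l^\ve\|_{L^2(\Gamma^\ve)}^2$ by $\mu_\delta\|\hat c_l^\ve\|_{L^2(\Omega_l^\ve)}^2 + \ve^2\delta\|\nabla \hat c_l^\ve\|_{L^2(\Omega_l^\ve)}^2$, with $\delta$ chosen small so that the gradient contribution is absorbed into the coercive diffusion term on the left.

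Collecting all estimates, with the diffusion terms appearing with favourable sign on the left, I expect to reach an inequality of the form
\[
\begin{aligned}
\sum_{l=e,i}\|\hat c_l^\ve(\tau)\|_{L^2(\Omega_l^\ve)}^2
 &+ \sum_{j=f,b}\ve\|\hat r_j^\ve(\tau)\|_{L^2(\Gamma^\ve)}^2
 + \sum_{s=d,a}\ve\|\hat p_s^\ve(\tau)\|_{L^2(\Gamma^\ve)}^2 \\
 &\leq C\int_0^\tau \Phi(t)\,dt,
\end{aligned}
\]
where $\Phi(t)$ denotes the same sum of squared $L^2$-norms evaluated at time $t$ and $C$ depends on $\ve$ and the uniform bounds but not on $\tau$. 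Since all the difference data vanish initially, Gronwall's inequality forces $\Phi\equiv 0$, so $\hat c_l^\ve = \hat r_j^\ve = \hat p_s^\ve = 0$ and the two solutions coincide.

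The main obstacle I anticipate is the bookkeeping in the bulk–surface coupling: one must apply \eqref{ineq:trace} with $\delta$ small enough that the several gradient terms generated on the right (from both the $c_e$- and $c_i$-equations and from the traces of the surface species entering $G_e$, $G_d$, $G_i$) are \emph{simultaneously} absorbed, while keeping the constant multiplying the remaining $L^2$-norms finite. Because uniqueness is only required for fixed $\ve$, the $\ve$-dependence of the constants is immaterial here, which makes the argument considerably simpler than the uniform a~priori estimates of Lemma~\ref{lem:apriori}; in particular the cancellation trick of testing with sums such as $\hat r_f^\ve+\hat r_b^\ve$ is unnecessary, since the nonlinearities are already Lipschitz.
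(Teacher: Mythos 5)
Your proposal is correct and follows essentially the same route as the paper, which proves uniqueness in one sentence by exactly this standard argument: subtract two solutions, use the $L^\infty$ bounds of Lemma~\ref{lem:boundedness} to turn the locally Lipschitz nonlinearities into genuinely Lipschitz ones on the solutions' range, absorb the boundary couplings via the scaled trace inequality \eqref{ineq:trace}, and conclude with Gronwall. Your write-up simply supplies the details the paper leaves implicit.
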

\begin{proof}
Uniqueness follows from standard arguments by taking the difference of two solutions and using the boundedness of solutions, shown in Lemma~\ref{lem:boundedness},   together with the local Lipschitz continuity of the nonlinear reaction terms.  
\end{proof}
}

\section{Convergence results and derivation of macroscopic equations} \label{sec:macro_model}
\changes{In this section, we use the a priori estimates of Theorem~\ref{theorem_exist} to deduce  the  convergence upto a subsequence of  solutions of the microscopic problem  \eqref{main1}--\eqref{bc_cond} to solutions of a limiting two-scale problem. We make use of the theory of two-scale convergence to pass to the limit and the necessary definitions and results are stated in Appendix \ref{appendix_A1}}

\changes{In the convergence results stated below we  consider the $H^1$-extension of  $c_e^\ve$ from $\Omega_e^\ve$ into $\Omega$, which is well defined due to the assumptions on the geometry of $\Omega^\ve_e$, see e.g.\ Remark~\ref{rem:extension} or  \cite{Acerbi_1992, Cioranescu_II_1999} and  we identify $c^\ve_e$ with this extension. By $[c^\ve_e]^\sim$ we will denote the extension of $c^\ve_e$ by zero from $\Omega_e^\ve$ into $\Omega$ and  by $\chi_{Y_e}$ the characteristic function of $Y_e$.   The space  
 $H^1_{\rm per}(Y)$ is defined as  the closure of $C^1_{\rm per}(Y)$ with respect to the $H^1$-norm.}

\begin{lemma}\label{lem_converge}
There exist  functions $c_e \in L^2(0,T; H^1(\Omega))$,  $c_e^1 \in L^2(\Omega_T; H^1_{\rm per}(Y)) $, $c_i \in L^2(\Omega_T; H^1(Y_i))$ and
$r_l, p_s \in L^2(\Omega_T; H^1(\Gamma))$, with $l=f,b$ and $s=a,d$, such that, up to a subsequence,  
\begin{equation}\label{convergence_1}
\begin{aligned} 
& c^\ve_e   \rightharpoonup c_e && \text{weakly  in }  L^2(0,T; H^1(\Omega)), \\
& \nabla c^\ve_e   \rightharpoonup \nabla c_e + \nabla_y c_e^1  && \text{two-scale},\\
& \changes{[c^\ve_e]^\sim \rightharpoonup c_e \chi_{Y_e},\; \;  [ \nabla c^\ve_e]^\sim  \rightharpoonup (\nabla c_e + \nabla_y c_e^1) \chi_{Y_e}}   && \text{two-scale},\\
& c_i^\ve   \rightharpoonup  c_i, \; \;  \ve \nabla c_i^\ve  \rightharpoonup  \nabla_{y} c_i && \text{two-scale}, \\
& r^\ve_l    \rightharpoonup  r_l, \; \;  \ve \nabla_{\Gamma} r_l^\ve  \rightharpoonup  \nabla_{\Gamma, y} r_l && \text{two-scale},\quad l = f,b,  \\
& p^\ve_s    \rightharpoonup  p_s, \; \;  \ve \nabla_{\Gamma} p_s^\ve  \rightharpoonup  \nabla_{\Gamma, y} p_s && \text{two-scale}, \quad s = a,d. 
\end{aligned}
\end{equation}
\end{lemma}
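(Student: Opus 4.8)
The plan is to extract all the limits by applying the two-scale compactness theorems recalled in Appendix~\ref{appendix_A1} to each family of functions, feeding in the uniform-in-$\ve$ bounds from Theorem~\ref{theorem_exist}. Since the four groups of variables obey different scalings, I would treat them separately and only at the end pass to a single common subsequence.

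For $c_e^\ve$ I would first identify it with its $H^1$-extension from Remark~\ref{rem:extension}; the extension estimate \eqref{estim:extension} together with the a~priori bound \eqref{estim:apriori_1} yields a uniform bound in $L^2(0,T;H^1(\Omega))$, hence a weakly convergent subsequence with limit $c_e$. The two-scale convergence of the gradient to $\nabla c_e + \nabla_y c_e^1$, with corrector $c_e^1 \in L^2(\Omega_T; H^1_{\rm per}(Y))$, then follows from the standard two-scale compactness result for $H^1$-bounded sequences. For the zero-extension statements I would use that $[c^\ve_e]^\sim$ and $[\nabla c^\ve_e]^\sim$ agree with the (extended) functions on $\Omega_e^\ve$ and vanish elsewhere, so their two-scale limits carry the factor $\chi_{Y_e}$.

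For $c_i^\ve$ the bounds on $c_i^\ve$ in $L^2$ and on $\ve\nabla c_i^\ve$ in $L^2(\Omega_{i,T}^\ve)$ are exactly the hypotheses of the two-scale compactness lemma on perforated domains, which produces a limit $c_i\in L^2(\Omega_T;H^1(Y_i))$ with $\ve\nabla c_i^\ve\rightharpoonup \nabla_y c_i$ two-scale. For the surface quantities $r_l^\ve$ and $p_s^\ve$, the scaled estimates $\sqrt{\ve}\,\|\cdot\|_{L^\infty(0,T;L^2(\Gamma^\ve))}$ and $\sqrt{\ve}\,\|\ve\nabla_\Gamma(\cdot)\|_{L^2(\Gamma_T^\ve)}$ in \eqref{estim:apriori_1} are precisely the normalisation required by the boundary two-scale convergence theorem; the $\ve$-weight compensates the $O(\ve^{d-1})$ surface measure, and the theorem delivers limits $r_l,p_s\in L^2(\Omega_T;H^1(\Gamma))$ together with the two-scale convergences $\ve\nabla_\Gamma r_l^\ve\rightharpoonup\nabla_{\Gamma,y}r_l$ and $\ve\nabla_\Gamma p_s^\ve\rightharpoonup\nabla_{\Gamma,y}p_s$ on the oscillating surfaces.

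I expect the only genuine obstacle to be the bookkeeping for $c_e^\ve$: reconciling the weak $H^1(\Omega)$ limit obtained via the $H^1$-extension with the zero-extension two-scale limits, i.e.\ checking that the corrector $c_e^1$ produced by the gradient compactness theorem is the same object appearing in the $\chi_{Y_e}$-weighted limit of $[\nabla c^\ve_e]^\sim$. This I would resolve by testing against oscillating test functions supported in $Y_e$ and using that the extension does not alter the function on $\Omega_e^\ve$. The remaining steps are direct citations of the two-scale and boundary two-scale compactness results of Appendix~\ref{appendix_A1}.
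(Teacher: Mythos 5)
Your proposal is correct and follows essentially the same route as the paper: identify $c_e^\ve$ with its $H^1$-extension, feed the a~priori estimates \eqref{estim:apriori_1} into the two-scale (and boundary two-scale) compactness results of Appendix~\ref{appendix_A1}, and observe that the extension and the zero-extension coincide on $\Omega_e^\ve$ so that the same limit $c_e$ and corrector $c_e^1$ appear in the $\chi_{Y_e}$-weighted limits. The paper's proof is exactly this citation-based argument, stated even more briefly.
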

\begin{proof} 
The convergence results in \eqref{convergence_1} follow directly from  the a priori estimates  \eqref{estim:apriori_1},  the extension of $c_e^\ve$ from $\Omega_e^\ve$ into $\Omega$   and compactness theorems for the weak convergence and for the two-scale convergence, see e.g.~\cite{Allaire_1992, Allaire_1996,  Neuss-Radu_1996, Nguetseng_1989} \changes{ and Appendix \ref{appendix_A1}.   Notice that since the  extension of $c^\ve_e$ and $[c^\ve_e]^\sim$ coincide in $\Omega^\ve_e$, we obtain the same function $c_e$  in the two-scale limit for both sequences $\{c^\ve_e\}$ and $\{[c^\ve_e]^\sim\}$.  }
\end{proof}

In order to pass to the limit in nonlinear reaction terms we prove strong convergence upto a subsequence of  solutions of the microscopic problem \eqref{main1}--\eqref{bc_cond}. 
\begin{lemma}\label{lem:strong_converg}
For a subsequence of a sequence of solutions of microscopic model \eqref{main1}--\eqref{bc_cond}, i.e.\  $\{c^\ve_e\}$, $\{\T^\ve(c^\ve_i)\}$,   $\{\T^\ve(r^\ve_l)\}$,  and $\{\T^\ve(p^\ve_s)\}$, where  $l=f,b$ and $s=a,d$,  we have the following convergence results:
\begin{equation}\label{conver_strong}
\begin{aligned} 
c^\ve_e  &  \to c_e && \text{strongly  in }  L^2(\Omega_T), \qquad \ve\|c_e^\ve - c_e\|^2_{L^2(\Gamma_T^\ve)} \to 0, \\
 \T^\ve(c_e^\ve)  & \to c_e && \text{strongly  in }  L^2(\Omega_T\times Y_e)  \text{ and  }   L^2(\Omega_T\times \Gamma), \\
 \T^\ve(c_i^\ve)  & \to c_i && \text{strongly  in }  L^2(\Omega_T\times Y_i), \\
 \T^\ve(r_l^\ve)  & \to r_l && \text{strongly  in }  L^2(\Omega_T\times \Gamma), \quad l = f,b\\
 \T^\ve(p_s^\ve)  & \to p_s && \text{strongly  in }  L^2(\Omega_T\times \Gamma), \quad s=a,d,
\end{aligned}
\end{equation}
as $\ve \to 0$. 
\end{lemma}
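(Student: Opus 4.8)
The plan is to upgrade the weak and two-scale limits of Lemma~\ref{lem_converge} to strong convergence, treating the extracellular field $c_e^\ve$, which enjoys full macroscopic regularity, separately from the remaining species $c_i^\ve$, $r_l^\ve$ and $p_s^\ve$, whose $\ve$-scaled diffusion leaves them with microscopic regularity only.

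First I would deal with $c_e^\ve$. Working with its $H^1$-extension to $\Omega$ (Remark~\ref{rem:extension}), the a~priori estimates \eqref{estim:apriori_1} bound $\{c_e^\ve\}$ in $L^2(0,T;H^1(\Omega))$ and, by the standard parabolic duality argument already invoked in Lemma~\ref{lem:existence}, $\{\partial_t c_e^\ve\}$ in $L^2(0,T;H^1(\Omega)')$. Since the embedding $H^1(\Omega)\hookrightarrow L^2(\Omega)$ is compact, the Aubin--Lions--Simon theorem yields $c_e^\ve\to c_e$ strongly in $L^2(\Omega_T)$. The statement $\ve\|c_e^\ve-c_e\|_{L^2(\Gamma_T^\ve)}^2\to0$ then follows from the scaled trace inequality \eqref{ineq:trace} applied to $c_e^\ve-c_e$: the volume term tends to zero by the strong $L^2(\Omega_T)$ convergence, while the gradient term equals $\ve^2\delta\|\nabla(c_e^\ve-c_e)\|^2_{L^2(\Omega_{e,T}^\ve)}$, which vanishes because $\|\nabla c_e^\ve\|_{L^2}$ is bounded uniformly in $\ve$. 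Finally, $\T^\ve(c_e^\ve)\to c_e$ in $L^2(\Omega_T\times Y_e)$ and in $L^2(\Omega_T\times\Gamma)$ follow from the continuity and approximation properties of the (boundary) unfolding operator together with the fact that $c_e$ is independent of the microscopic variable, i.e.\ $\|\T^\ve(v^\ve)-v\|\to0$ whenever $v^\ve\to v$ strongly, combined once more with the trace estimate.

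For the species $c_i^\ve$, $r_l^\ve$ and $p_s^\ve$ I would argue entirely on the fixed product domains via the unfolded sequences. The Gronwall estimates \eqref{estim_rfb} already bound $\{\T^\ve(r_l^\ve)\}$ and $\{\T^\ve(p_s^\ve)\}$ in $L^2(\Omega_T;H^1(\Gamma))$ and $\{\T^\ve(c_i^\ve)\}$ in $L^2(\Omega_T;H^1(Y_i))$. Since the unfolding operator acts only on the spatial variables it commutes with $\partial_t$, so $\partial_t\T^\ve(\cdot)=\T^\ve(\partial_t(\cdot))$; after unfolding the weak equations \eqref{weak_sol_1}--\eqref{weak_sol_2}, the diffusion operator becomes of order one in $y$, so using the uniform boundedness of the reaction terms from Lemma~\ref{lem:boundedness} together with the $\nabla_{\Gamma,y}$- and $\nabla_y$-bounds just quoted yields uniform bounds for $\partial_t\T^\ve(r_l^\ve)$, $\partial_t\T^\ve(p_s^\ve)$ in $L^2(\Omega_T;H^1(\Gamma)')$ and for $\partial_t\T^\ve(c_i^\ve)$ in $L^2(\Omega_T;H^1(Y_i)')$. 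I would then invoke an Aubin--Lions-type compactness theorem for unfolded sequences (of the kind developed in the periodic unfolding literature cited above) to extract the strong limits in \eqref{conver_strong}, identifying them with the two-scale limits of Lemma~\ref{lem_converge} by uniqueness.

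The delicate point---and the main obstacle---is that regularity in the microscopic variable and in time does not by itself give compactness in the macroscopic variable $x$: the surfaces $\Gamma^\ve$ and inclusions $\Omega_i^\ve$ are mutually disconnected, so the $\ve$-scaled diffusion controls oscillations only within a single cell and $\T^\ve(r_l^\ve)$ could a~priori oscillate from cell to cell, which would destroy strong $L^2(\Omega_T\times\Gamma)$ convergence (a piecewise-constant-in-$x$ sequence with alternating cell values satisfies all the microscopic and time bounds yet converges only weakly). The $x$-compactness must therefore be imported through the data of the problem: it is anchored by the strong two-scale convergence of the initial conditions, guaranteed by their product structure $r_{j,1}(x)\,r_{j,2}(x/\ve)$, etc., and propagated by the $x$-regular coupling through $c_e^\ve$, whose strong convergence is already established. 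Consequently I expect the bulk of the work to lie in verifying the hypotheses of the compactness theorem---in particular the macroscopic translation estimate for the unfolded sequences---rather than in the passage to the limit itself, which is then routine.
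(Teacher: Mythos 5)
Your treatment of $c^\ve_e$ is essentially correct: the paper obtains the strong $L^2(\Omega_T)$ convergence via a time-equicontinuity estimate combined with a Kolmogorov compactness argument rather than Aubin--Lions, and handles the trace term via Simon's theorem and the compact embedding $H^1(\Omega)\hookrightarrow H^\beta(\Omega)$, but your route through a bound on $\partial_t c^\ve_e$ in $L^2(0,T;H^1(\Omega)^\prime)$ and the scaled trace inequality \eqref{ineq:trace} applied to $c^\ve_e-c_e$ reaches the same conclusions, and your passage to $\T^\ve(c^\ve_e)$ coincides with the paper's.

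The genuine gap is in the argument for $\T^\ve(c^\ve_i)$, $\T^\ve(r^\ve_l)$ and $\T^\ve(p^\ve_s)$. You correctly diagnose that an Aubin--Lions-type theorem cannot be applied directly to the unfolded sequences, because the available bounds give no compactness in the macroscopic variable $x$ (the unfolded functions are piecewise constant in $x$ on cells of size $\ve$ and may a priori oscillate from cell to cell), but your proposed remedy --- establishing a macroscopic translation estimate and then invoking a compactness theorem --- is only announced, not carried out, and it is exactly where all the difficulty sits. The paper avoids compactness in $x$ altogether: it proves that $\{\T^\ve(r^\ve_l)\}$, $\{\T^\ve(p^\ve_s)\}$ and $\{\T^\ve(c^\ve_i)\}$ are Cauchy sequences in the relevant $L^2$ spaces by subtracting the unfolded equations for two parameters $\ve_m$ and $\ve_k$, testing with the difference of the corresponding solutions, and running a Gronwall argument whose right-hand side is controlled by $\|\T^{\ve_m}(c_e^{\ve_m})-\T^{\ve_k}(c_e^{\ve_k})\|_{L^2(\Omega_T\times\Gamma)}$ (which vanishes by the first part of the lemma) and by the differences of the unfolded initial data (which vanish thanks to the product structure $r_{j,1}(x)\,r_{j,2}(x/\ve)$, etc.); the $L^\infty$ bounds of Lemma~\ref{lem:boundedness} are what make the quadratic reaction terms Lipschitz in this estimate. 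If you wanted to salvage your route you would have to prove uniform-in-$\ve$ smallness of the $x$-translates of the unfolded sequences, which in effect requires the same Gronwall comparison argument applied to translates instead of to pairs $(\ve_m,\ve_k)$, with additional care because unfolding translates $x$ only through $\ve[x/\ve]$; the Cauchy formulation is the cleaner way to package this, and it is the step missing from your proposal.
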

\begin{proof} 
We first show the equicontinuity of $c^\ve_e$ with respect to the time variable. The a priori estimates in \eqref{estim:apriori_1} and the boundedness of $r_f^\ve$ yield
\begin{equation*}
\begin{aligned} 
\|\vartheta_\delta c^\ve_e- c_e^\ve\|^2_{L^2(\Omega_{e,\tau}^\ve)} \leq 
& C_1 \int_{\Omega^\ve_{e, \tau}} \int_t^{t+\delta} |\nabla c^\ve_e| ds  |\nabla (\vartheta_\delta c^\ve_e- c^\ve_e)| dx dt
\\ & +  C_2\int_{\Omega^\ve_{e, \tau}} \int_t^{t+\delta}(1+ |c^\ve_e|) ds  |(\vartheta_\delta c^\ve_e- c^\ve_e)| dx dt
\\ & + \ve C_3\int_{\Gamma^\ve_{\tau}} \int_t^{t+\delta}\left( |c^\ve_e r_f^\ve|+ |r_b^\ve|\right) ds  |\vartheta_\delta c^\ve_e- c^\ve_e| d\sigma^\ve dt
\leq C \delta, 
\end{aligned}
\end{equation*}
for $\tau \in (0, T- \delta]$ and $\delta >0$, where $\vartheta_\delta v(t,x) = v(t+\delta, x)$ for $x\in \Omega$ and $t \in [0, T-\delta]$.  Then the properties of an extension of $c^\ve_e$ from $\Omega_e^\ve$ into $\Omega$ together with the uniform in $\ve$ estimate for $\nabla c^\ve_e$ and a Kolmogorov compactness  result \cite{Brezis_2011} ensure the strong convergence of $c^\ve_e$ in $L^2((0,T)\times \Omega)$. Applying the Simon compactness theorem \cite{Simon_1986} and the compact embedding of $H^1(\Omega)$ into $H^\beta(\Omega)$ for $1/2 < \beta <1$,  together with the trace inequality and  a scaling argument, similar to \cite{Ptashnyk_2008},  we also obtain  $\ve\|c_e^\ve - c_e\|^2_{L^2(\Gamma_T^\ve)} \to 0$ as $\ve \to 0$. 
\\
\changes{The properties of the unfolding operator, see \cite{Cioranescu_2012} and Appendix \ref{appendix_A1}, imply
$$\|  \T^\ve(c_e^\ve) \|_{L^2(\Omega_T\times Y_e) } \leq  |Y|^{\frac 12}  \|  c_e^\ve \|_{L^2(\Omega_{e,T}^\ve) }, \; \;  
\|  \T^\ve(c_e^\ve) \|_{L^2(\Omega_T\times \Gamma) } \leq  |Y|^{\frac 12}  \ve^{\frac 12} \|  c_e^\ve \|_{L^2(\Gamma_T^\ve)}, $$
 and  for $c_e\in L^2(\Omega_T)$, considered as constant with respect to $y \in Y_e$ or $y \in \Gamma$, respectively, we have 
 $\T^\ve(c_e) \to c_e$ strongly in $L^2(\Omega_T\times Y_e)$ and in $L^2(\Omega_T\times \Gamma)$ as $\ve \to 0$.   Then we obtain  
\begin{equation*}
\begin{aligned} 
\|  \T^\ve(c_e^\ve)   - c_e \|_{L^2(\Omega_T\times Y_e) } & \leq 
\|  \T^\ve(c_e^\ve)   - \T^\ve(c_e) \|_{L^2(\Omega_T\times Y_e) }   + \|  \T^\ve(c_e)   - c_e \|_{L^2(\Omega_T\times Y_e) } 
\\  & \leq  |Y|^{\frac 12} \|  c_e^\ve   - c_e \|_{L^2(\Omega_{e,T}^\ve) }  + \|  \T^\ve(c_e)   - c_e \|_{L^2(\Omega_T\times Y_e) }, \\
\|  \T^\ve(c_e^\ve)   - c_e \|_{L^2(\Omega_T\times \Gamma) } 
&  \leq \ve^{\frac 12}  |Y|^{\frac 12} \|  c_e^\ve   - c_e \|_{L^2(\Gamma^\ve_T) }  + \|  \T^\ve(c_e)   - c_e \|_{L^2(\Omega_T\times \Gamma) }.
\end{aligned}
\end{equation*}
Hence the strong convergence of $\{c_e^\ve\}$ in $L^2(\Omega_T)$ and the convergence result for $\ve\|c_e^\ve - c_e\|^2_{L^2(\Gamma_T^\ve)}$ in  \eqref{conver_strong} ensure the strong convergence of $\{ \T^\ve(c_e^\ve)\}$ in $L^2(\Omega_T\times Y_e)$ and in $L^2(\Omega_T\times \Gamma)$, respectively. 
}

To obtain  the strong convergence of $\mathcal T^\ve(c_i^\ve)$,  $\mathcal T^\ve(r^\ve_l)$, and $\mathcal T^\ve(p^\ve_s)$, with $l=f,b$ and $s=a,d$,  we show the Cauchy property for the corresponding sequences. 
Considering the difference of equations for $\ve_m$ and $\ve_k$ and using  the boundedness of $c^\ve_e$, $r_f^\ve$,  and $r_b^\ve$  yields 
\begin{equation*}
\begin{aligned} 
\sum_{l=f,b} \|\T^{\ve_m}(r_l^{\ve_m}(\tau)) - \T^{\ve_k}(r_l^{\ve_k}(\tau))\|^2_{L^2(\Omega\times \Gamma)}
+ \|\nabla_{\Gamma, y} (\T^{\ve_m}(r_l^{\ve_m}) - \T^{\ve_k}(r_l^{\ve_k}))\|^2_{L^2(\Omega_\tau\times \Gamma)}
\\
\leq C_1\Big[  \|\T^{\ve_m}(c_e^{\ve_m}) - \T^{\ve_k}(c_e^{\ve_k})\|^2_{L^2(\Omega_\tau\times \Gamma)} + \sum_{l=f,b} \|\T^{\ve_m}(r_l^{\ve_m}) - \T^{\ve_k}(r_l^{\ve_k})\|^2_{L^2(\Omega_\tau\times \Gamma)}\Big] \\
 +  C_2\Big[  \sum_{j=a,d} \|\T^{\ve_m}(p_j^{\ve_m}) - \T^{\ve_k}(p_j^{\ve_k})\|^2_{L^2(\Omega_\tau\times \Gamma)}  
+\sum_{l=f,b} \|\T^{\ve_m}(r_{l,0}^{\ve_m}) - \T^{\ve_k}(r_{l,0}^{\ve_k})\|^2_{L^2(\Omega\times \Gamma)} \Big], 
\end{aligned}
\end{equation*}
for $\tau \in (0,T]$. 
Similarly, the boundedness of $p_d^\ve$ yields 
\begin{equation*}
\begin{aligned} 
\sum_{j=a,d}\|\T^{\ve_m}(p_j^{\ve_m}(\tau)) - \T^{\ve_k}(p_j^{\ve_k}(\tau))\|^2_{L^2(\Omega\times \Gamma)}
+ \|\nabla_{\Gamma, y} (\T^{\ve_m}(p_j^{\ve_m}) - \T^{\ve_k}(p_j^{\ve_k}))\|^2_{L^2(\Omega_\tau\times \Gamma)} \; 
\\ \leq C_1 \sum_{j=a,d}\Big[ \|\T^{\ve_m}(p_j^{\ve_m}) - \T^{\ve_k}(p_j^{\ve_k})\|^2_{L^2(\Omega_\tau\times \Gamma)} + \|\T^{\ve_m}(p_{j,0}^{\ve_m}) - \T^{\ve_k}(p^{\ve_k}_{j, 0})\|^2_{L^2(\Omega\times \Gamma)}\Big] 
\\ + 
 C_2\Big[ \|\T^{\ve_m}(r_b^{\ve_m}) - \T^{\ve_k}(r_b^{\ve_k})\|^2_{L^2(\Omega_\tau\times \Gamma)} +  \|\T^{\ve_m}(c_i^{\ve_m}) - \T^{\ve_k}(c_i^{\ve_k})\|^2_{L^2(\Omega_\tau\times \Gamma)} \Big]. 
\end{aligned}
\end{equation*}
For $\T^\ve(c_i^\ve)$ the trace inequality implies 
\begin{equation*}
\begin{aligned} 
\|\T^{\ve_m}(c_i^{\ve_m}(\tau)) - \T^{\ve_k}(c_i^{\ve_k}(\tau))\|^2_{L^2(\Omega\times Y_i)}
+ \|\nabla_{y} (\T^{\ve_m}(c_i^{\ve_m}) - \T^{\ve_k}(c_i^{\ve_k}))\|^2_{L^2(\Omega_\tau\times Y_i)} \quad 
\\ \leq  C_\delta \|\T^{\ve_m}(c_i^{\ve_m}) - \T^{\ve_k}(c_i^{\ve_k})\|^2_{L^2(\Omega_\tau\times Y_i)}  
+ \delta  \|\nabla_y(\T^{\ve_m}(c_i^{\ve_m}) - \T^{\ve_k}(c_i^{\ve_k}))\|^2_{L^2(\Omega_\tau\times Y_i)}  
\\ +
C_1\Big[ \|\T^{\ve_m}(p_a^{\ve_m}) - \T^{\ve_k}(p_a^{\ve_k})\|^2_{L^2(\Omega_\tau\times \Gamma)} + \|\T^{\ve_m}(c_{i,0}^{\ve_m}) - \T^{\ve_k}(c^{\ve_k}_{i, 0})\|^2_{L^2(\Omega\times Y_i)}\Big] .
\end{aligned}
\end{equation*}
Using the three estimates above, the strong convergence  of the initial conditions and the strong convergence of $\{\T^\ve (c_e^\ve)\}$ in $L^2(\Omega_T\times \Gamma)$  we obtain  the Cauchy property  and  hence the strong convergence upto  subsequences of $\{\mathcal T^\ve(c_i^\ve)\}$,  $\{\mathcal T^\ve(r^\ve_l)\}$, and $\{\mathcal T^\ve(p^\ve_s)\}$, with $l=f,b$ and $s=a,d$. 
\end{proof} 

The convergence results in Lemmata~\ref{lem_converge} and \ref{lem:strong_converg} allow us to derive the  corresponding macroscopic equations obtained in the limit as $\ve\to0$  from the  microscopic model \eqref{main1}--\eqref{bc_cond}. 
\begin{theorem} 
A sequence  $\{c_e^\ve, c_i^\ve, r_f^\ve, r_b^\ve, p_a^\ve, p_d^\ve\}$ of solutions  of   \eqref{main1}--\eqref{bc_cond}  converge as $\ve \to 0$ to functions
 $c_e \in \Lp{2}(0,T;\Hil{1}(\O))$, $c_i\in\Lp{2}\left(0,T;\Lp{2}\left(\O;\Hil{1}(Y_i)\right)\right)$ and 
$r_l, p_s \in \Lp{2}\left(0,T;\Lp{2}\left(\O;\Hil{1}(\G)\right)\right)$,    for $l=f,b$, $s=a,d$,  that satisfy  the following macroscopic equations:
 \begin{equation} \label{eq:hom_bulk} 
\begin{aligned}
\theta_e \partial_t c_e - \nabla\cdot ( D^{\rm hom}_e(x) \nabla c_e) & = \theta_e F_e(c_e)  - \frac 1 {|Y|} \int_\Gamma  G_e(c_e, r_f, r_b)  d{\sigma_y} && \text{in } \Omega, \\
D^{\rm hom}_e(x) \nabla c_e \cdot \nu &= 0 && \text{ on } \partial \Omega, \\ 
\partial_t c_i - \nabla_y\cdot ( D_i(y) \nabla_y c_i) &= F_i(c_i)  && \text{ in } \Omega\times Y_i, \\
D_i(y) \nabla_y c_i \cdot \nu &= G_i(p_a, c_i) && \text{ on } \Omega \times\Gamma, 
\end{aligned}
\end{equation}
where $\theta_e = |Y_e|/|Y|$,  and $$D_{e,ij}^{\rm hom}(x)= \dfrac 1{|Y|} \int_{Y_e} \big[ D_{e, ij} (x,y) + \big(D_{e}(x,y) \nabla_y w^j(y)\big)_i \big] dy, $$  with $w^j$ being solutions of the unit cell problems
$$
\begin{aligned}
&{\rm div}_y ( D_e(x,y)(\nabla_y w^j + e_j)) = 0 && \text{ in } Y_e \times\O, \quad \int_{Y_e} w^j(x,y) dy = 0, \\
&  D_e(x,y)(\nabla_y w^j + e_j) \cdot \nu = 0 && \text{ on } \Gamma \times\O, \quad w^j(x, \cdot) \; \; Y-\text{periodic}, 
\end{aligned}
$$
for $x\in \Omega$, where $\{ e_j\}_{j=1, \ldots, d}$ is the standard basis in $\mathbb R^d$,   together with the dynamics of receptors and proteins on the cell membrane $\Omega\times\Gamma$
\begin{equation}\label{eq:hom_surf} 
\begin{aligned}
& \partial_t r_f - \nabla_{\Gamma,y} \cdot (D_{f} \nabla_{\Gamma, y} r_f) =  F_f(r_f, r_b)-  G_e(c_e, r_f, r_b)  - d_f r_f, \\
& \partial_t r_b - \nabla_{\Gamma,y}\cdot (D_{b} \nabla_{\Gamma, y} r_b) =  \phantom{  f_r(r_f)+}  G_e(c_e, r_f, r_b) - G_d(r_b, p_d, p_a)  - d_b  r_b, \\
& \partial_t p_d - \nabla_{\Gamma,y} \cdot (D_{d} \nabla_{\Gamma, y} p_d) =  F_d(p_d) -  G_d(r_b, p_d, p_a) - d_d p_d, \\
& \partial_t p_a - \nabla_{\Gamma,y} \cdot (D_{a} \nabla_{\Gamma, y} p_a) = \phantom{ f_p(p_d) }   G_d(r_b, p_d, p_a)  -  G_i(p_a, c_i)- d_a p_a, 
\end{aligned}
\end{equation}
and initial conditions 
\begin{equation}\label{intial_cond_macro} 
\begin{aligned}
& c_e(0,x) = c_{e,0}(x)\;  \text{ for } x \in \Omega, \quad c_i(0,x,y) = c_{i,1}(x)c_{i,2}(y)  \text{ for } x \in \Omega, \; y \in Y_i, \\
& r_j(0, x,y) = r_{j,1}(x) r_{j,2}(y), \quad p_s(0,x,y) = p_{s,1}(x) p_{s,2} (y) \; \text{ for } x \in \Omega, \; y \in \Gamma,    
\end{aligned}
\end{equation}
where $j= f,b$ and  $s = a,d$.
\end{theorem}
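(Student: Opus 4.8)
The strategy is to pass to the limit $\ve \to 0$ in the weak formulations \eqref{weak_sol_1}--\eqref{weak_sol_2}, handling the linear diffusive contributions through the two-scale convergences of Lemma~\ref{lem_converge} and all nonlinear reaction and coupling terms through the strong convergences of Lemma~\ref{lem:strong_converg} combined with the uniform $L^\infty$-bounds of Theorem~\ref{theorem_exist}. For the extracellular equation I would test \eqref{weak_sol_1} with oscillating functions $\phi^\ve(t,x) = \phi_0(t,x) + \ve\,\phi_1(t,x,x/\ve)$, where $\phi_0 \in C^\infty([0,T]\times\overline\O)$ and $\phi_1 \in C^\infty([0,T]\times\overline\O; C^\infty_{\rm per}(Y))$ vanish at $t=T$. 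Integrating the time-derivative term by parts in $t$ (so the initial datum $c_{e,0}$ enters and the strong $L^2(\O_T)$-convergence of the extension of $c_e^\ve$ can be used), the diffusion term passes to the two-scale limit $\int_0^T\!\int_\O\!\int_{Y_e} D_e(x,y)(\nabla c_e + \nabla_y c_e^1)\cdot(\nabla\phi_0 + \nabla_y\phi_1)\,dy\,dx\,dt$ via $\nabla c_e^\ve \rightharpoonup \nabla c_e + \nabla_y c_e^1$. The reaction term $\la F_e(c_e^\ve),\phi^\ve\ra$ converges by strong convergence of $c_e^\ve$ together with the local Lipschitz continuity and growth bounds on $F_e$, producing the factor $\theta_e = |Y_e|/|Y|$ as the weak-$\ast$ limit of $\chi_{\O_e^\ve}$; the boundary coupling $\ve\la G_e(c_e^\ve,r_f^\ve,r_b^\ve),\phi^\ve\ra_{\G_T^\ve}$ is rewritten through the boundary unfolding identity, and the strong convergences of $\T^\ve(c_e^\ve)$, $\T^\ve(r_f^\ve)$ and $\T^\ve(r_b^\ve)$ on $\O_T\times\G$ identify its limit as $\tfrac{1}{|Y|}\int_0^T\!\int_\O\!\int_\G G_e(c_e,r_f,r_b)\,\phi_0\,d\sigma_y\,dx\,dt$.

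Separating test functions then yields the macroscopic system for $c_e$: choosing $\phi_1 = 0$ gives the weak form of the first two lines of \eqref{eq:hom_bulk}, while $\phi_0 = 0$ isolates the cell problem $\nabla_y\cdot\big(D_e(x,y)(\nabla c_e + \nabla_y c_e^1)\big) = 0$ in $Y_e$ with no-flux on $\G$ and $Y$-periodicity. By linearity of this problem in $\nabla c_e$, one writes $c_e^1(t,x,y) = \sum_{j=1}^d \partial_{x_j} c_e(t,x)\, w^j(x,y)$ with $w^j$ the stated unit-cell correctors, and substitution into the macroscopic identity produces exactly the homogenised tensor $D^{\rm hom}_e$.

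For the intracellular and membrane species I would instead pass to the limit directly in the unfolded weak formulations, exploiting that $\T^\ve$ commutes with $\partial_t$ so that the time-derivative terms converge using the strong convergence of the unfolded sequences from Lemma~\ref{lem:strong_converg}. For $c_i$, testing with $\psi^\ve(t,x) = \psi(t,x,x/\ve)$ and noting $\ve\nabla\psi^\ve \to \nabla_y\psi$ together with $\ve\nabla c_i^\ve \rightharpoonup \nabla_y c_i$ turns the $\ve^2$-scaled diffusion into $\int\!\int_{Y_i} D_i(y)\nabla_y c_i\cdot\nabla_y\psi$, while the surface source $\ve\la G_i(p_a^\ve,c_i^\ve),\psi^\ve\ra_{\G_T^\ve}$ becomes the Robin term on $\O\times\G$; this gives the $y$-local problem in the last two lines of \eqref{eq:hom_bulk}, with $x$ entering only as a parameter. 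For $r_f,r_b,p_d,p_a$, the boundary unfolding converts the $\ve^2$-scaled Laplace-Beltrami operator into a surface diffusion operator acting in the microscopic variable $y\in\G$, and the reaction terms $F_f$, $F_d$, $G_e$, $G_d$, $G_i$ pass to the limit by the strong convergences and boundedness, yielding \eqref{eq:hom_surf}. The initial conditions \eqref{intial_cond_macro} follow from the product structure of \eqref{init_cond} assumed in Assumption~\ref{assumption}, which guarantees strong two-scale convergence of the data.

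\textbf{Main obstacle.} The delicate point is the rigorous limit in the nonlinear products on the oscillating surfaces, such as $a_e^\ve c_e^\ve r_f^\ve$ and $a_i^\ve r_b^\ve p_d^\ve$, since weak two-scale convergence does not pass to products. This is precisely where Lemma~\ref{lem:strong_converg} and the uniform bounds of Theorem~\ref{theorem_exist} are essential: the $L^\infty$-bounds dominate the products and reduce each coupling to a weak-times-strong pairing of unfolded sequences whose limit is the product of the limits, while the $\ve$-weighted surface measure is absorbed through the boundary unfolding identity, which is exactly what produces the $1/|Y|$ averaging factor in the macroscopic flux term.
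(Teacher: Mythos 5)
Your proposal is correct and follows essentially the same route as the paper: oscillating test functions $\phi_1+\ve\phi_2$ and $\psi_1(t,x,x/\ve)$, $\varphi_1(t,x,x/\ve)$, two-scale convergence for the time-derivative and diffusion terms, and the strong convergence of the unfolded sequences from Lemma~\ref{lem:strong_converg} together with the uniform bounds to pass to the limit in the nonlinear reaction and boundary-coupling terms, with the initial conditions recovered from the strong two-scale convergence of the product-structured data. Your explicit decoupling of $c_e^1$ via the correctors $w^j$ and your phrasing of the bulk/surface limits through the unfolded formulations are equivalent to the paper's presentation (by Proposition~\ref{prop.unfold}) rather than a genuinely different argument.
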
 

\begin{proof}  To derive the macroscopic problem   take  $\phi^\ve(t,x)= \phi_1(t,x) + \ve \phi_2(t, x, x/\ve)$, where $\phi_1 \in H^1(\Omega_T)$ and  $\phi_2 \in C^1_0(\Omega_T; C_{\rm per}(Y))$,   and 
$\psi^\ve(t,x) = \psi_1(t,x, x/\ve)$, with  $\psi_1 \in C^1([0, T]; C^1_0(\Omega; H^1(Y_i)))$,  as test functions in \eqref{weak_sol_1} and $\varphi^\ve(t,x) = \varphi_1(t,x,x/\ve)$, with $\varphi_1\in C^1([0,T]; C^1_0(\Omega; H^1(\Gamma)))$,  as a test function in \eqref{weak_sol_2}, respectively, where $\psi_1$ and $\varphi_1$ are $Y$-periodically extended to $\mathbb R^d$. Then we obtain 
\changes{
\begin{eqnarray} \label{converge_1}
 \la \partial_t c^\ve_e, \phi_1 + \ve \phi_2  \ra + \la D^\ve_e(x) \nabla c^\ve_e, \nabla (\phi_1 + \ve \phi_2) + \nabla_y \phi_2  \ra_{\Omega_{e,T}^\ve} 
= \la  F_e(c^\ve_e), \phi_1 + \ve \phi_2 \ra_{\Omega_{e,T}^\ve}  \quad \nonumber\\
  - \ve \la G_e (c^\ve_e, r_f^\ve, r_b^\ve), \phi_1  + \ve \phi_2\ra_{\Gamma_{T}^\ve},  \\
 \la \partial_t c^\ve_i, \psi_1 \ra +  \la \ve D^\ve_i(x) \nabla c^\ve_i, \ve \nabla \psi_1 +  \nabla_y \psi_1 \ra_{\Omega_{i,T}^\ve} 
= \la  F_i(c^\ve_i), \psi_1 \ra_{\Omega_{i,T}^\ve}  + \ve \la G_i (p_a^\ve, c^\ve_i), \psi _1\ra_{\Gamma_{T}^\ve} , \nonumber
\end{eqnarray}
and 
\begin{equation} \label{converge_2}
\begin{aligned}
\ve  \la \partial_t r^\ve_f, \varphi_1 \ra+ \ve  \la  \ve D_f \nabla_\Gamma r^\ve_f, \, & \ve \nabla_\Gamma  \varphi_1 + \nabla_{\Gamma, y}  \varphi_1  \ra_{\Gamma_{T}^\ve} 
\\ & = \ve  \la F_f(r^\ve_f, r^\ve_b) - G_e (c^\ve_e, r_f^\ve, r_b^\ve) - d_f  r_f^\ve, \varphi_1  \ra_{\Gamma_{T}^\ve} , \\
\ve  \la \partial_t r^\ve_b, \varphi_1 \ra +\ve  \la \ve D_b \nabla_\Gamma  r^\ve_b,  \, & \ve \nabla_\Gamma  \varphi + \nabla_{\Gamma, y}  \varphi_1 \ra_{\Gamma_{T}^\ve} 
\\ & =\ve  \la G_e (c^\ve_e, r_f^\ve, r_b^\ve) - G_d ( r_b^\ve, p_d^\ve, p_a^\ve) - d_b  r_b^\ve, \varphi_1 \ra_{\Gamma_{T}^\ve} ,\\
 \ve \la \partial_t p^\ve_d, \varphi_1 \ra + \ve \la \ve D_d \nabla_\Gamma  p^\ve_d, \, & \ve \nabla_\Gamma  \varphi_1 + \nabla_{\Gamma, y}  \varphi_1 \ra_{\Gamma_{T}^\ve} 
\\  & =\ve  \la F_d(p^\ve_d) - G_d ( r_b^\ve, p_d^\ve, p_a^\ve) - d_d  p_d^\ve, \varphi_1 \ra_{\Gamma_{T}^\ve} , \\
 \ve \la \partial_t p^\ve_a, \varphi_1 \ra + \ve \la \ve D_a \nabla_\Gamma  p^\ve_a, \, & \ve\nabla_\Gamma  \varphi_1 +  \nabla_{\Gamma, y}  \varphi_1 \ra_{\Gamma_{T}^\ve} 
\\  &= \ve \la G_d ( r_b^\ve, p_d^\ve, p_a^\ve) - G_i(p_a^\ve, c_i^\ve)- d_a  p_a^\ve, \varphi_1 \ra_{\Gamma_{T}^\ve}.
\end{aligned} 
\end{equation}
Considering  first  $\phi_1 \in H^1(\Omega_T)$ with $\phi_1(0,x) = 0$ and  $\phi_1(T,x) =0$ for $x\in \Omega$ and using integration by parts and  the two-scale converge results, see Lemma~\ref{lem_converge}, yield
$$
\begin{aligned} 
\lim\limits_{\ve \to 0}  \la \partial_t c^\ve_e, \phi_1 + \ve \phi_2 \ra =  - \lim\limits_{\ve \to 0}  \la  c^\ve_e, \partial_t \phi_1 + \ve \partial_t \phi_2 \ra_{\Omega_{e, T} ^\ve} 
= - |Y|^{-1} \la  c_e, \partial_t \phi_1  \ra_{Y_e\times \Omega_{T}}.  
\end{aligned} 
$$
Similar calculations ensure convergence  of $ \la \partial_t c^\ve_i, \psi_1 \ra $, $ \ve \la \partial_t r^\ve_l, \varphi_1 \ra$, and $\ve \la \partial_t p^\ve_s, \varphi_1 \ra$, with $l=f,b$ and $s=a,d$.   The two-scale convergence results, see Lemma~\ref{lem_converge},  directly imply  
$$
\begin{aligned}
& \lim\limits_{\ve \to 0} \la D^\ve_e(x) \nabla c^\ve_e, \nabla (\phi_1 + \ve \phi_2) + \nabla_y \phi_2  \ra_{\Omega_{e, T}^\ve}  \\ & \qquad \qquad \qquad = 
\frac 1 {|Y|} \la D_e(x,y) (\nabla c_e + \nabla_y c_e^1), \nabla \phi_1  + \nabla_y \phi_2  \ra_{Y_e\times \Omega_{T}}\\
&\lim\limits_{\ve \to 0}  \la \ve D^\ve_i(x) \nabla c^\ve_i, \ve \nabla \psi_1 +  \nabla_y \psi_1 \ra_{\Omega_{i,T}^\ve}  = \frac 1 {|Y|}  \la D_i(y) \nabla_y c_i, \nabla_y \psi_1 \ra_{Y_i\times \Omega_{T}}, \\
 & \lim\limits_{\ve \to 0} \ve  \la  \ve D_f \nabla_\Gamma r^\ve_f, \,  \ve \nabla_\Gamma  \varphi_1 + \nabla_{\Gamma, y}  \varphi_1  \ra_{\Gamma_{T}^\ve} =
 \frac 1{|Y|}  \la   D_f \nabla_{\Gamma, y} r_f, \,   \nabla_{\Gamma, y}  \varphi_1  \ra_{\Gamma\times \Omega_T}, 
\end{aligned}
$$
and convergence of  the linear  term  $\ve \la d_f r_f^\ve, \varphi_1  \ra_{\Gamma_{T}^\ve} \to |Y|^{-1} \la d_f r_f, \varphi_1  \ra_{\Gamma \times\Omega_T}$ as $\ve \to 0$. 
The convergence of the corresponding terms in equations for $r_b^\ve$, $p_a^\ve$, and $p_d^\ve$ is obtained in the same way.

To pass to the limit in the nonlinear reaction terms we use the strong convergence results proven in  Lemma~\ref{lem:strong_converg}. 
The definition and properties of the unfolding operator (c.f., Appendix \ref{appendix_A1}), together with the assumptions on functions $F_l$ and $G_j$, for $l=e,i,f, d$ and $j=e,d, i$ and the boundedness of solutions of the microscopic problem  \eqref{main1}--\eqref{bc_cond},  imply 
$$
\begin{aligned}
\|F_l(\mathcal T^\ve( c_l^\ve))\|_{L^2(\Omega_T\times Y_l)} \leq |Y|^{\frac 12 } \|F_l(c_l^\ve)\|_{L^2(\Omega_{l,T}^\ve)}\leq C_1(1+ \|c_l^\ve\|_{L^2(\Omega_{l,T}^\ve)}) \leq C_2, \\
\|G_e(\mathcal T^\ve( c_e^\ve), \mathcal T^\ve(r_f^\ve), \mathcal T^\ve(r_b^\ve))\|_{L^2(\Omega_T\times \Gamma)} \leq 
\ve^{\frac 12}|Y|^{\frac 12 } \|G_e( c_e^\ve, r_f^\ve, r_b^\ve)\|_{L^2(\Gamma_{T}^\ve)}\leq C. 
\end{aligned}
$$
Here we used the fact that $\mathcal T^\ve(F_l(c_l^\ve)) = F_l(\mathcal T^\ve( c_l^\ve))$, for  $l = e,i$,  $\mathcal T^\ve(G_e( c_e^\ve, r_f^\ve, r_b^\ve) )= 
G_e(\mathcal T^\ve(c_e^\ve), \mathcal T^\ve(r_f^\ve), \mathcal T^\ve(r_b^\ve))$.

Similar estimates hold for  $F_f$, $F_d$, $G_d$ and $G_i$.  Then the strong convergence  of $\{\mathcal T^\ve (c_l^\ve) \}$, $\{\mathcal T^\ve (r_j^\ve)\}$, and $\{\mathcal T^\ve (p_s^\ve)\}$,  with $l=e,i$, $s=a,d$, and   $j=f,b$,  ensures  the following convergence results,  
$\mathcal T^\ve(F_l(c_l^\ve)) \rightharpoonup F_l(c_l)$   in $L^2(\Omega_T\times Y_j)$, for $l=e,i$,   and $\mathcal T^\ve(G_e( c_e^\ve, r_f^\ve, r_b^\ve) ) \rightharpoonup G_e( c_e, r_f, r_b)$,  $\mathcal T^\ve(F_f(r_f^\ve, r_b^\ve)) \rightharpoonup F_f(r_f, r_b)$, $\mathcal T^\ve(F_d(p_d^\ve)) \rightharpoonup F_d(p_d)$, 
$\mathcal T^\ve(G_d(r_b^\ve, p_d^\ve, p_a^\ve))  \rightharpoonup G_d(r_b, p_d, p_a)$, 
  $\mathcal T^\ve(G_i( p_a^\ve, c_i^\ve) ) \rightharpoonup G_i(p_a, c_i)$   in   $L^2(\Omega_T\times \Gamma)$.  
These convergence results together with the properties of the unfolding operator (c.f., Appendix \ref{appendix_A1}) imply the convergence of the nonlinear terms in the microscopic problem.
} 

To complete the proof, we note that standard results for parabolic equations imply 
\begin{align*}
&\text{$\partial_t c_e \in L^2(0,T; H^1(\Omega)^\prime)$,  $\partial_t c_i \in L^2((0,T)\times \Omega; H^1(Y_i)^\prime)$,}\\ 
 &\text{$\partial_t r_l, \, \partial_t p_s \in L^2((0,T)\times \Omega; H^1(\Gamma)^\prime)$,  for $l=f,b$ and $s=d,a$.}
 \end{align*}
Thus $c_e \in C([0,T]; L^2(\Omega))$,  $c_i \in C([0,T]; L^2(\Omega\times Y_i))$, and $r_j, p_s \in C([0,T]; L^2(\Omega\times \Gamma))$,  for $j=f,b$ and $s=d,a$. Taking   $\phi_1$,  $\psi_1$, and $\varphi_1$ such that $\phi_1(T,x) =0$, $\psi_1(T,x, y) = 0$, and $\varphi_1(T,x, z) = 0$ for $x\in \Omega$, $y \in Y_i$, $z\in \Gamma$ and using the strong two-scale convergence of $c_{i, 0}^\ve(x)$ to $c_{i,1}(x) c_{i,2}(y)$, of   $r_{j,0}^\ve(x)$ to $r_{j,1}(x) r_{j,2}(y)$, and of $p_{s,0}^\ve(x)$ to  $p_{s,1}(x) p_{s,2}(y)$, for $j=f,b$ and  $s=a,d$, we deduce  that the initial conditions  \eqref{intial_cond_macro} are satisfied.
\end{proof}

To design a multiscale numerical scheme for the macroscopic two-scale problem \eqref{eq:hom_bulk}--\eqref{intial_cond_macro} we define our notion of  weak solutions to the problem. 
\begin{definition}
Weak solutions of the macroscopic   problem  \eqref{eq:hom_bulk}--\eqref{intial_cond_macro} are functions
\begin{align*}
&c_e\in\Lp{2}(0,T;\Hil{1}(\O))\text{ with }\pdt c_e\in  L^2(0, T; \Hil{1}(\O)^\prime),\\
&c_i\in\Lp{2}\left(0,T;\Lp{2}\left(\O;\Hil{1}(Y_i)\right)\right)\text{ with }\pdt c_e\in  L^2(\Omega_T; \Hil{1}(Y_i)^\prime),\\
&r_l\in\Lp{2}\left(0,T;\Lp{2}\left(\O;\Hil{1}(\G)\right)\right)\text{ with }\pdt r_l\in L^2(\Omega_T; \Hil{1}(\Gamma)^\prime), \quad l=f,b,\\
&p_s\in\Lp{2}\left(0,T;\Lp{2}\left(\O;\Hil{1}(\G)\right)\right)\text{ with }\pdt p_s\in L^2(\Omega_T; \Hil{1}(\Gamma)^\prime), \quad s=a,d,
\end{align*}
such that
\begin{equation}\label{def_macro_weak_1}
\begin{aligned}
 \la \theta_e\pdt c_e, \phi\ra +\Ltwop{D^{\rm hom}_e(x) \nabla c_e}{\nabla \phi}{\O_T}&=\la\theta_eF_e(c_e), \phi \ra_{\O_T}\\
&  -\Ltwop{ \frac 1 {|Y|} \int_\Gamma  G_e(c_e, r_f, r_b)\,  d{\sigma_y} }{\phi}{\O_T},\\
 \la \pdt c_i, \psi \ra +\Ltwop{D_i (y)\nabla_y c_i}{\nabla_y \psi}{Y_i\times\O_T}&=\Ltwop{F_i(c_i)}{\psi}{Y_i\times\O_T}+\Ltwop{G_i(p_a, c_i)}{\psi}{\G\times\O_T},
\end{aligned}
\end{equation}
and 
\begin{equation}\label{def_macro_weak_2}
\begin{aligned}
 \la{\pdt r_f}, {\chi}\ra &+\Ltwop{D_{f} \nabla_{\G} r_f}{\nabla_{\G} \chi}{\G\times\O_T}\\
&=\Ltwop{F_f(r_f, r_b)-  G_e(c_e, r_f, r_b)  - d_f r_f}{\chi}{\G\times\O_T},\\
 \la {\pdt r_b}, {\chi}\ra & +\Ltwop{D_{b} \nabla_{\G} r_b}{\nabla_{\G} \chi}{\G\times\O_T}\\
&=\Ltwop{G_e(c_e, r_f, r_b) - G_d(r_b, p_a, p_d)  - d_b  r_b}{\chi}{\G\times\O_T},\\
 \la \pdt p_d, \chi\ra &+\Ltwop{D_{d} \nabla_{\G} p_d}{\nabla_{\G} \chi}{\G\times\O_T}\\
&=\Ltwop{F_d(p_d) -  G_d(r_b, p_a, p_d) - d_d p_d}{\chi}{\G\times\O_T},\\
 \la \pdt p_a, \chi \ra &+\Ltwop{D_{a} \nabla_{\G} p_a}{\nabla_{\G} \chi}{\G\times\O_T}\\
&=\Ltwop{G_d(r_b, p_a, p_d)  -  G_i(p_a, c_i)- d_a p_a}{\chi}{\G\times\O_T},
\end{aligned}
\end{equation}
for all $\phi\in\Lp{2}(0,T;\Hil{1}(\O)),\psi\in \Lp{2}\left(\O_T;\Hil{1}(Y_i)\right)$, and $\chi\in \Lp{2}\left(\O_T;\Hil{1}(\G)\right)$, where the initial conditions \eqref{intial_cond_macro} are satisfied in the $L^2$-sense. 
\end{definition}

Notice that the coupling between macroscopic  and microscopic  scales is given through $c_e$ in the equations for $r_f$ and $r_b$ and through the reaction term in the equation for $c_e$.

\section{Numerical scheme for the homogenised problem}\label{sec:scheme}
\changes{
In this section we present a robust numerical method for the simulation of the homogenised macroscopic model of Section~\ref{sec:macro_model}, i.e., equations~\eqref{eq:hom_bulk} and \eqref{eq:hom_surf}.  We employ a tensor product finite element approach for the discretisation of the two-scale systems \cite{Schwab_2002}. For the  bulk-surface systems, we employ a piecewise linear  bulk-surface finite element method. The method is based on the coupled bulk-surface finite element method  proposed and analysed (for linear elliptic systems) in \cite{elliott2012finite}. 

We define computational domains $\O_h$, $Y_{h,e}$, $Y_{h,i}$ and $\G_h$ by requiring that $\O_h$, $Y_{h,e}$ and $Y_{h,i}$ are polyhedral approximations to $\O$, $Y_e$ and $Y_i$ respectively and we set $\G_h=\partial Y_{h,i}$, i.e., $\G_h$ is the boundary of the polyhedral domain $Y_{h,i}$. We assume that $\O_h$, $Y_{h,e}$ and $Y_{h,i}$ consist of the union of $d$ dimensional simplices (triangles for $d=2$ and tetrahedra for $d=3$) and hence the faces of $\G_h$ are $d-1$ dimensional simplices.  

 We define $\Th, \Shi, \She$ to be triangulations of $\O_h, Y_{h,i}$ and $Y_{h,e}$ respectively and assume that each consists of  closed non-degenerate simplices. We denote by $h_\O, h_{Y,i}, h_{Y,e}$ and $h_{\G}$ the maximum diameter of the simplices in   $\Th, \Shi, \She$ and $\G_h$ respectively.
 Furthermore, we assume the triangulation is such that for every $k\in\Shi$, $k\cap\G_h$ consists of at most one face of $k$. 
 We define bulk and surface finite element spaces as follows
 \begin{align*}
 \Vho&=\left\{\Phi\in C(\O_h):\Phi\vert_k\in\mathbb{P}^1,\myall k\in\Th\right\},\\
  \vhi&=\left\{\Phi\in C(Y_{h,i}):\Phi\vert_k\in\mathbb{P}^1,\myall k\in\Shi\right\},\\
  \vhee&=\left\{\Phi\in C(Y_{h,e}):\Phi\vert_k\in\mathbb{P}^1,\myall k\in\She\right\},\\
 \vhe&=\left\{\Phi\in \Hil{1}_{\rm{ per}}(Y_{h,e})\cap C(Y_{h,e}):\Phi\vert_k\in\mathbb{P}^1,\myall k\in\She\right\},\\
 \vhg&=\left\{\Phi\in C(\G_h):\Phi\vert_{k}\in\mathbb{P}^1,\myall r\in\Shi \text{ with }k=r\cap\G_h\neq\emptyset\right\},
\end{align*}
where $\Hil{1}_{\rm{ per}}(Y_{h,e})$ denotes the subspace of $Y$-periodic functions in $\Hil{1}(Y_{h,e})$.
For the discretisation of the two-scale systems we define the tensor product spaces
\begin{align*}
 \Vhi&=\vhi\otimes\Vho,\\
      \Vhee&=\vhee\otimes\Vho,\\
   \Vhe&=\vhe\otimes\Vho,\\
  \Vhg&=\vhg\otimes\Vho.
\end{align*}

The scheme for the solution of the cell problems to obtain the diffusion tensor $D^{\rm hom}$ is, for  $j=1,\dots,n+1$ find $W^j\in\Vhe$ such that  
\[
\begin{aligned} 
&\Ltwop{D_e(x,y)(\nabla_yW^j+ e_j)}{\nabla_y \Phi}{Y_{h,e}\times\O_h} =0 %
\end{aligned} 
\]
for all $\Phi\in\Vhee$.

In order to propose a fully discrete scheme, we divide the time interval $[0, T ]$ into  $N$ subintervals, $0 = t_0 < \dotsc < t_N = T$ and denote by $\tau := t_{n} - t_{n-1}$ the time step, for simplicity we assume a uniform timestep. We consistently use the following shorthand for a function of time:  $ f^n := f (t_n )$, we denote by $\pdtau f^n:={\tau}^{-1}\left(f^n-f^{n-1}\right).$ We propose an IMEX time-stepping method in which the reactions  are treated explicitly and  the diffusive terms implicitly. The fully discrete scheme may be written as,
for $i=1,\dotsc,N$, given  
\[
C_e^{n-1}\in\Vho,\quad
C_i^{n-1}\in\Vhi,\quad R_f^{n-1},R_b^{n-1},P_d^{n-1},P_a^{n-1}\in\Vhg,  
\]
find 
\[
C_e^n\in\Vho,\quad
C_i^n\in\Vhi,\quad R_f^n,R_b^n,P_d^n,P_a^n\in\Vhg,  
\]
such that, for all $\Phi\in\Vho$ and $\Psi\in\Vhi$ 
\begin{equation} \label{eqn:scheme_start}
\begin{aligned} 
 &\Ltwop{\theta_e\pdtau C^n_e}{\Phi}{\O_h}+\Ltwop{D_{h,e}^{\rm hom}(x) \nabla C^n_e}{\nabla \Phi}{\O_h}\\
 &\qquad =\Ltwop{\theta_eF_e(C_e^{n-1}) - \frac 1 {|Y|} \int_{\G_h}  G_e(C_e^{n-1}, R_f^{n-1}, R_b^{n-1})\,  d{\sigma_y} }{\Phi}{\O_h},\\
&\Ltwop{\pdtau C^n_i}{\Psi}{Y_{h,i}\times\O_h}+\Ltwop{D_i(y) \nabla_y C^n_i}{\nabla_y \Psi}{Y_{h,i}\times\O_h}\\
&\qquad =\Ltwop{F_i(C^{n-1}_i)}{\Psi}{Y_{h,i}\times\O_h}+\Ltwop{G_i(P^{n-1}_a, C^{n-1}_i)}{\Psi}{\G_h\times\O_h},
\end{aligned}
\end{equation}
and for all $\Xi\in\Vhg$,
\begin{equation} \label{eqn:scheme_middle}
\begin{aligned} 
&\Ltwop{\pdtau R^n_f}{\Xi}{\G_h\times\O_h}+\Ltwop{D_{f} \nabla_{\G_h} R^n_f}{\nabla_{\G_h} \Xi}{{\G_h\times\O_h}}\\
&\qquad  =\Ltwop{F_f(R^{n-1}_f, R_b^{n-1})-  G_e(C^{n-1}_e, R^{n-1}_f, R^{n-1}_b)  - d_f R^n_f}{\Xi}{\G_h\times\O_h},\\
& \Ltwop{\pdtau R^n_b}{\Xi}{\G_h\times\O_h}+ \Ltwop{D_{b} \nabla_{\G_h} R^n_b}{\nabla_{\G_h} \Xi}{\G_h\times\O_h} \\
& \qquad  = \Ltwop{G_e(C^{n-1}_e, R^{n-1}_f, R^{n-1}_b) - G_d(R^{n-1}_b, P^{n-1}_d, P^{n-1}_a)  - d_b  R^n_b}{\Xi}{\G_h\times\O_h},\\
&\Ltwop{\pdtau P^n_d}{\Xi}{\G_h\times\O_h} +\Ltwop{D_{d} \nabla_{\G_h} P^n_d}{\nabla_{\G_h} \Xi}{\G_h\times\O_h}\\
&\qquad  =\Ltwop{F_d(P^{n-1}_d) -  G_d(R^{n-1}_b, P^{n-1}_d, P^{n-1}_a) - d_d P^n_d}{\Xi}{\G_h\times\O_h}, \\
&\Ltwop{\pdtau P^n_a}{\Xi}{\G_h\times\O_h}  + \Ltwop{D_{a} \nabla_{\G_h} P^n_a}{\nabla_{\G_h} \Xi}{\G_h\times\O_h} \\
& \qquad  = \Ltwop{G_d(R^{n-1}_b, P^{n-1}_d, P^{n-1}_a)  -  G_i(P^{n-1}_a, C^{n-1}_i)- d_a P^n_a}{\Xi}{\G_h\times\O_h}.
\end{aligned}
\end{equation}

\begin{remark}[Comments on the implementation]
The explicit treatment of the reaction terms results in fully decoupled systems of linear equations to be solved at each time-step.  Moreover, we use mass lumping for the approximation, this has two main advantages in the context of the present study. Firstly, lumping is equivalent to employing a nodal quadrature rule \cite{Thomee:06:book:Galerkin},  this allows us to interpret the two-scale systems as parameterised systems with the macroscopic variable playing the role of a parameter that may be solved independently and in parallel at each node of the macroscopic triangulation $\Th$. Secondly, the use of 
lumping and the fact that the system matrices do not change during the time evolution allows an efficient implementation in which 
virtually no assembly needs to  be carried out on each time-step.
 \end{remark}
 
\section{Benchmark computations}
\label{sec:benchmark}
We now carry out some benchmark simulations to illustrate the observed convergence rate of the numerical scheme proposed in Section~\ref{sec:scheme}.  We set $\O=[-0.5,0.5]^2$ and $Y_i$ to be a disc of radius 1. 
 For benchmarking we consider the following system 
 \begin{equation} \label{eq:hom_becnhmark} 
\begin{aligned}
& \partial_t c_e -\Delta c_e = -  \int_\Gamma  G_e(c_e, r_f,p_a)\,  d{\sigma_y} +f_{1}&& \text{ in } \Omega_T, \\
&\nabla c_e \cdot \nu = 0 && \text{ on } (\partial \Omega)_T, \\ 
&\partial_t c_i -  \Delta_y c_i = f_{2} && \text{ in } \Omega_T\times Y_i, \\
&\nabla_y c_i \cdot \nu = G_i(p_a, c_i) && \text{ on } \Omega_T \times\Gamma, \\
& \partial_t r_f -  \Delta_{\Gamma, y} r_f = -G_e(c_e, r_f,p_a)+f_3 && \text{ on } \Omega_T \times\Gamma,\\
& \partial_t p_a - \Delta_{\Gamma, y} p_a =G_e(c_e, r_f,p_a)-G_i(p_a, c_i) +f_4&& \text{ on } \Omega_T \times\Gamma, 
\end{aligned}
\end{equation}
with
\[
G_e(c_e, r_f,p_a)=c_er_f-p_a\quad\text{and}\quad G_i(p_a, c_i)=p_a-c_i. 
\]
The source terms $f_{1}(x,t), f_2(x,y,t), f_3(x,z,t)$ and $f_4(x,z,t)$ for $x\in\O,y\in Y_i, z\in\G$ and $t\in[0,T]$ are determined such that the exact solution to \eqref{eq:hom_becnhmark} is 
\[
c_e(x,t)=\cos(\pi t)e^{-10\left\vert x\right\vert^2}, c_i(x,y,t)=\left(1+\left\vert x\right\vert^2\right)e^{-4t(y_1y_2)^2} \quad\text{ for }x\in\O, y\in Y_i
\]
and
\[
r_f(x,z,t)=p_a(x,z,t)=\left(5+5\left\vert x\right\vert^2\right)e^{-4t(z_1z_2)^2}, \quad\text{ for }x\in\O, z\in\G
\]
and we set as the end time $T=0.25$.

In the numerical method we use the interpolant of the source terms into the appropriate finite element space. We consider a series of  refinements of the meshes with $\tau\sim h^2$  where $h:=\max\{h_\Gamma, h_{Y,i}, h_\Omega\}$. In particular we consider a series of  uniform refinements of the bulk and surface meshes with mesh sizes as given in 
Table~\ref{tab:mesh}. 
\begin{table}[h!]
\begin{center}
\begin{tabular}{|c|c|c|c|c|c|c|} 
$h_\G$&0.765& 0.390& 0.196& 0.098& 0.049\\
$h_{Y,i}$&1.000& 0.571& 0.305& 0.157& 0.080\\
$h_\O$&1.000& 0.500& 0.250& 0.125&0.063
\end{tabular}
\caption{Mesh sizes for the benchmarking study of Section~\ref{sec:benchmark}.}
\label{tab:mesh}
\end{center}
\end{table}

We denote by $e_{c_e}, e_{c_i}, e_{r}$ and $e_p$ the errors in the approximation of $c_e, c_i, r_f$ and $p_a$ respectively. In order to investigate the behaviour of the scheme we report on the experimental order of convergence (EOC) which provides  a numerical measure of the convergence rate. For a series of uniform refinements of a triangulation $\{\mathcal{S}_{h,i}\}_{i=0,\dots,N}$, denoting by $\{h_i\}_{i=0\dots,N}$, $\{e_i\}_{i=0\dots,N}$ the corresponding maximum mesh-size and the corresponding error respectively, then the EOC is given by
\[
\operatorname{EOC}_i(e_{i,i+1},h_{i,i+1}):=\ln(e_{i+1}/e_i)/\ln(h_{i+1}/h_i).
\]   
Figure \ref{EOC:FIG} shows the errors and the corresponding EOCs. 
The convergence rates appear optimal with first order convergence in the energy norm and second order convergence (as $\tau\sim h^2$) in the $\Lp{2}$ norm. 

  \begin{figure}[htbp]
  \centering
\includegraphics[trim = 0mm 0mm 0mm 0mm,  clip, width=.48\linewidth]{./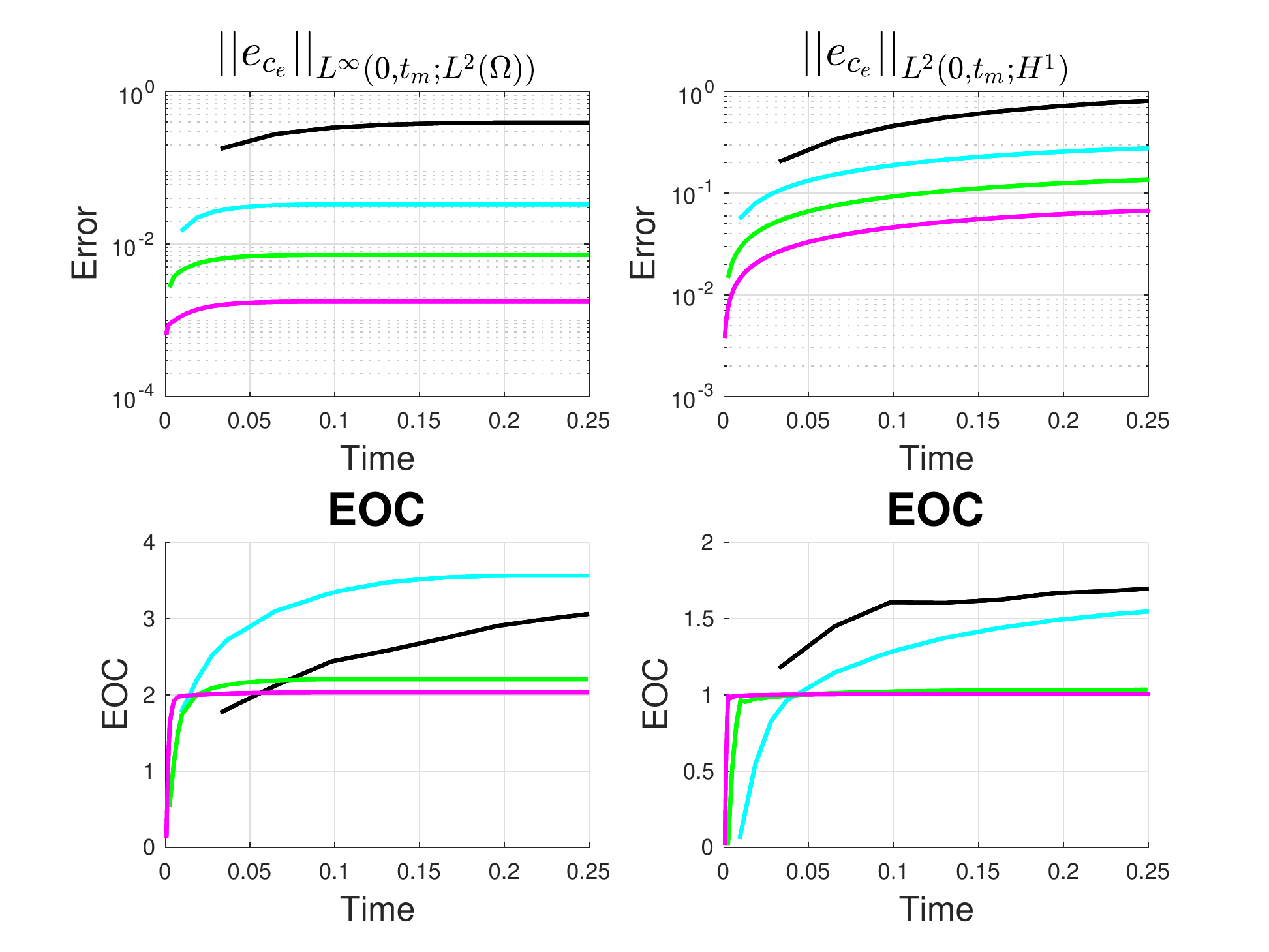}
\includegraphics[trim = 0mm 0mm 0mm 0mm,  clip, width=.48\linewidth]{./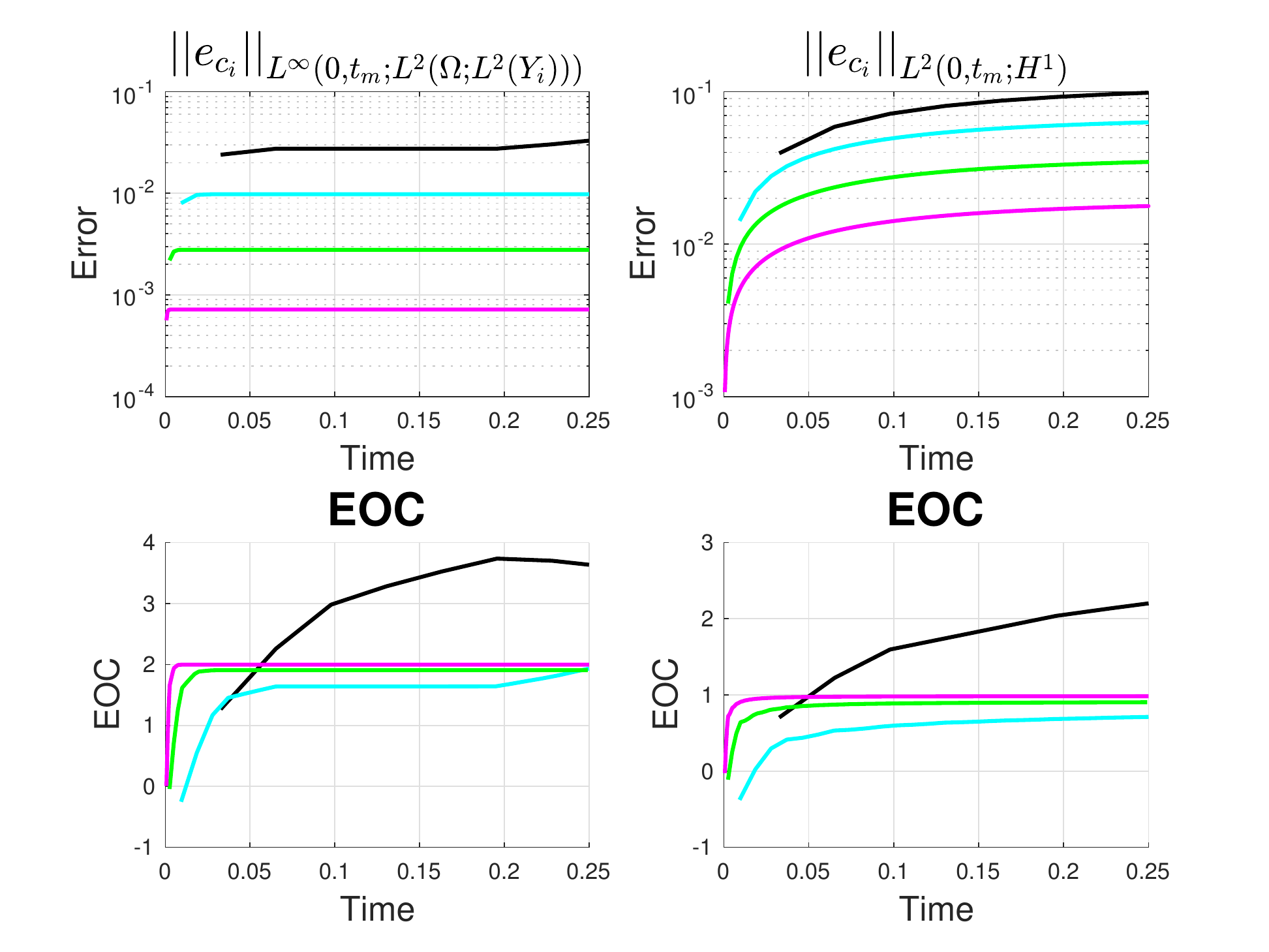} 
\includegraphics[trim = 0mm 0mm 0mm 0mm,  clip, width=.48\linewidth]{./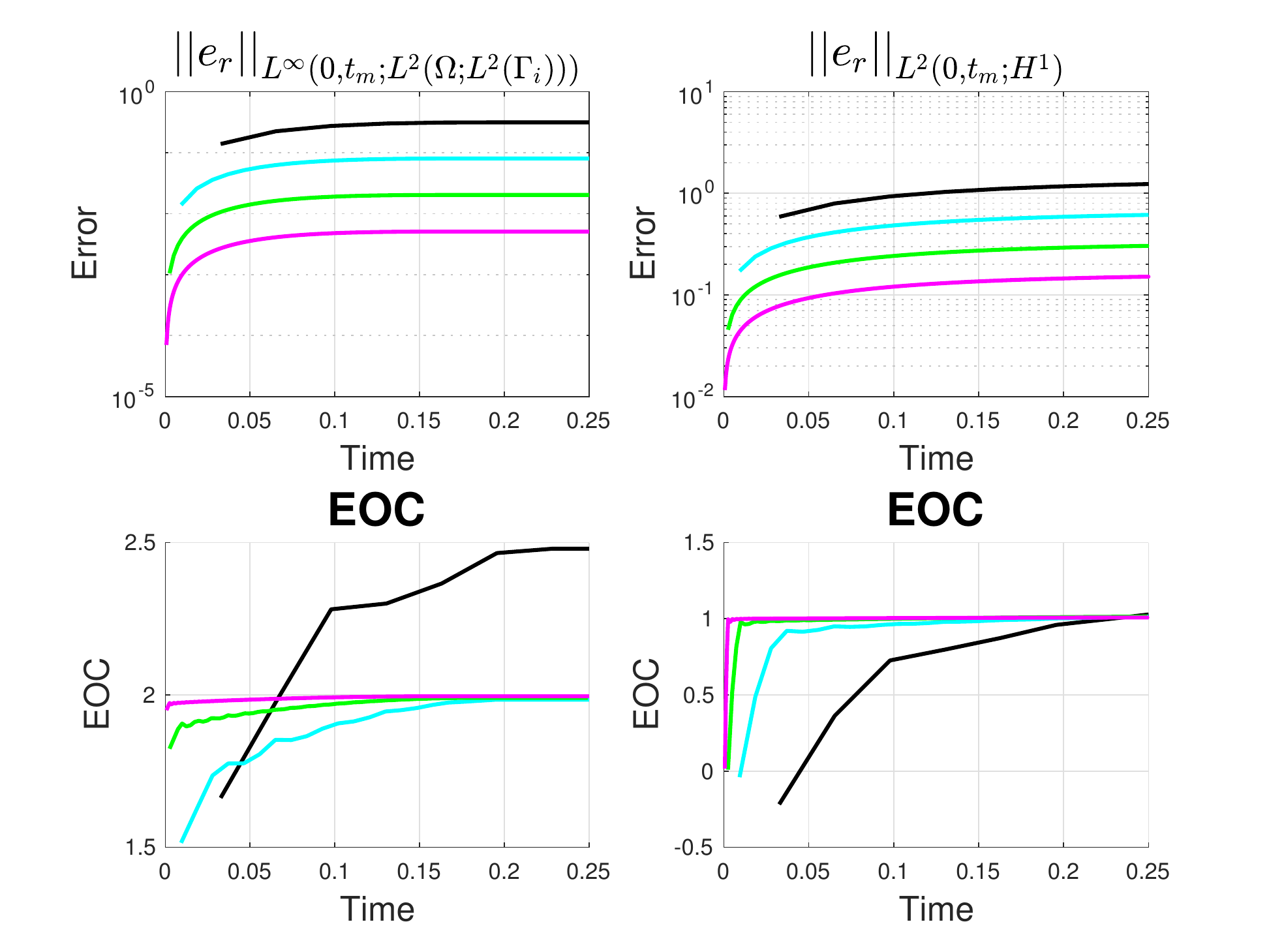}
\includegraphics[trim = 0mm 0mm 0mm 0mm,  clip, width=.48\linewidth]{./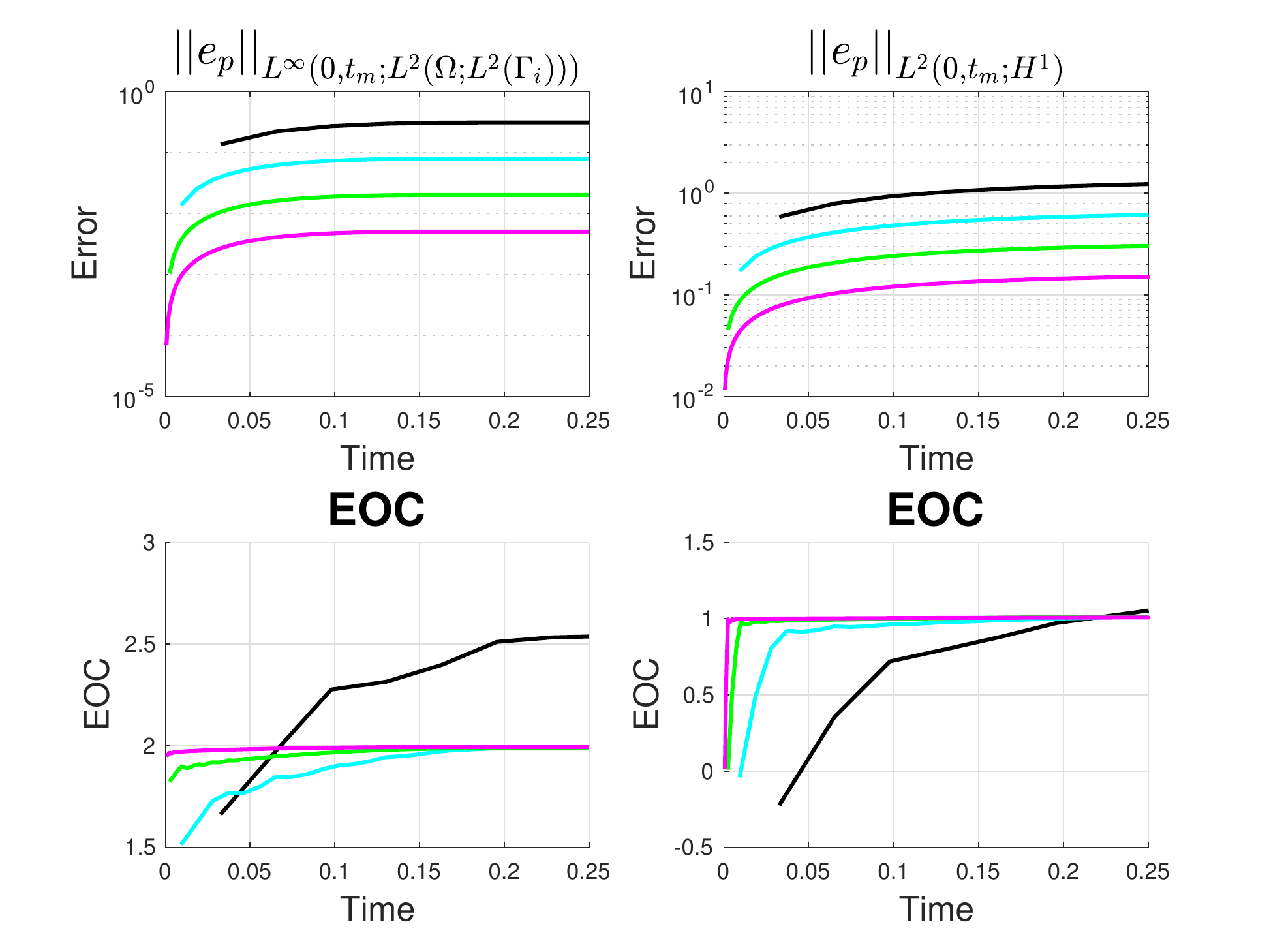} 
\caption{Errors in the $\Lp{2}(\Hil{1})$ and $\Lp{\infty}(\Lp{2})$ norms for a series of mesh refinements (c.f., Table \ref{tab:mesh}) with $\tau\sim h^2$ 
  and experimental order of convergence. The observed convergence rates appear optimal.}
  \label{EOC:FIG}
  \end{figure}
  }
An analysis of the numerical method is beyond the scope of the present work. We believe that combining  the techniques developed in \cite{elliott2012finite} for the analysis of finite element schemes for bulk-surface equations,   \cite{hoang2005high} which deals with multiscale finite element methods and  \cite{lakkis2013implicit} which proves error bounds for IMEX approximations of semilinear systems, it should be possible to prove optimal error bounds for our method that reflect the rates observed numerically in Figure \ref{EOC:FIG}.

\section{Parameterisation and numerical solutions  for  a biologically relevant model}\label{sec:bio_model}

We now present and simulate a biologically relevant model, the model considered in this section is related to  the Langmuir-Hinshelwood mechanism for signalling processes at the level of a single cell considered in \cite{Garcia_2014}, and in particular we take the majority of our parameters from said work. We make the assumption that all the parameters are independent of the microscopic variable and state the parameter values we use in the microscopic model \eqref{main1}--\eqref{bc_cond} along with the source of the parameter value in Table \ref{tab:parameters}. In order to keep the model as simple as possible whilst still illustrating the key phenomena captured by the model, we assume there is no production or linear degradation of any of the species, i.e., we set $F_e(c_e) = F_i(c_i) = F_f(r_f, r_b) = F_d(p_d) =0$ and $d_k =0$ for $k=b,f,a,d$.

\begin{table}
  \centering
  \begin{tabular}{lcr}
    \hline
    Parameter & Value & Source \\
    \hline\\
    $a_e$ & $10^3 ({\rm mol}/ {\rm m}^3)^{-1}\,  {\rm s} ^{-1}$ & \cite{Garcia_2014} \\
    $b_e$ & $5\cdot10^{-3}  {\rm s} ^{-1}$ & \cite{Garcia_2014} \\
    $a_i$ & $10^{-2} (\frac{\rm molecules}{\mu {\rm m}^2})^{-1} \, {\rm s}^{-1} = 6\cdot  10^{9}\,  ({\rm mol}/{\rm m}^2)^{-1} \, {\rm s}^{-1}$ & \cite{Garcia_2014} \\
    $b_i$ & $10^{-2} \,  {\rm s} ^{-1}$ & \cite{Garcia_2014} \\
    $\gamma_i$&$2 \cdot 10^{-3} \, {\rm s}^{-1}$&\cite{Garcia_2014} \\
    $\kappa_i$&$10^{-8}  \, {\rm m} \, s^{-1}$&\cite{Garcia_2014} \\
    $D_e$&$ 10^{-9} \, \mathrm{m}^2 \, \mathrm{s}^{-1}$&\cite{LL_1986}\\
    $D_i$&$ 10^{-11} \, \mathrm{m}^2 \, \mathrm{s}^{-1}$&\cite{Kuhn_2011}\\
     $D_k$, \, $k=b,f,a,d$& $10^{-15} \, \mathrm{m}^2 \, \mathrm{s}^{-1}$&\cite{LL_1986}\\ 
    \hline\\
  \end{tabular}
  \caption{Parameters used for the microscopic model and sources for the parameter estimates.}
  \label{tab:parameters}
\end{table}

The dependent and independent variables of the microscopic model and their associated units are as given in Table \ref{tab:var}.
\begin{table}
  \centering
\begin{tabular}{ccccccc} 
\hline
$x$  & $t$  & $c_e$ &  $c_i$ & $r_f$, $r_b$ & $p_d$, $p_a$ \\
\hline\\
m & s &  mol/m$^3$ &   mol/m$^3$ &  mol/m$^2$ &  mol/m$^2$\\
\hline\\
\end{tabular}
\caption{Variables of microscopic model and associated units.}
\label{tab:var}
\end{table}
Finally, in order to ensure there is some ligand present in the system, we set $\partial\Omega_D$ to be a Dirichlet boundary such that 
the boundary condition \eqref{bc_cond} becomes
\begin{equation*}
\begin{aligned}
   c^\ve_e &= \hat c_e  \quad   && \text{ on } \;  \partial\Omega_D, \;  \; t\geq0,\\
 D^\ve_e(x) \nabla c^\ve_e \cdot \nu &= 0  \quad   && \text{ on } \; \partial \Omega\setminus\partial\Omega_D, \;  \; t>0,
\end{aligned}
\end{equation*}
where we set $\hat c_e=10^{-4} {\rm mol}/{{\rm m}^3}$, c.f., \cite{Garcia_2014}.
Taking $\ve = 10^{-3}$  we introduce the characteristic scales 
\begin{equation}
\begin{aligned}
 \hat t = 10^3\, {\rm s},  \quad \hat x = 10^{-2}\, {\rm m}, \quad   \hat r = \hat r_l = \hat p_k = 10^{-9} \frac{\rm mol}{{\rm m}^2},  \\
 \hat c  = \hat c_e = \hat c_i =  \frac{\hat r }{\ve \, \hat x} = 10^{-4} \frac{\rm mol}{{\rm m}^3} , 
  \end{aligned}
\end{equation} 
where $l =f,b$ and  $k=a,d$,  and then  the dimensionless parameters  are given by 
\begin{equation}
\begin{aligned}
& D_e^\ast= D_e \hat t/ (\hat x)^2 = 10^{-2}, \quad   \ve^2 D_i^\ast= D_i \hat t/ (\hat x)^2 = 10^{-5}, \; D_i^\ast = 10 \\
&  \ve^2 D_l^\ast= D_l \hat t/ (\hat x)^2 = 10^{-8}, \; D_l^\ast = 10^{-2}, \quad l =f,b,d,a, \\
& \ve b^\ast_e  = \frac { \hat t } {\hat x} \frac{ \hat r_b}{\hat c} b_e = 5 \cdot  10^{-3}, \quad \ve a^\ast_e = \frac { \hat t  \hat r_f} {\hat x} a_e = 0.1, \quad b^\ast_e = 5, \quad a_e^\ast = 100, \\  
& \ve \gamma_i^\ast =  \frac{ \hat t}{\hat x} \frac{\hat p_a}{\hat c_i } \gamma_i =  2\cdot 10^{-3}, \quad
 \ve\kappa_i^\ast = \frac{\hat t}{\hat x} \kappa_i = 10^{-3}, \quad  \gamma_i^\ast = 2, \quad \kappa_i^\ast = 1, \\
&    a_i^\ast = a_i \hat p_d \hat t = 6\cdot 10^3, \quad b_i^\ast = b_i \hat t = 10. 
\end{aligned}
\end{equation} 
Notice that we also have $a^\ast_{e} = a_e \hat c \hat t= 100$, $b^\ast_{e}= b_e \hat t = 5$, and  $\gamma^\ast_{i} = \gamma_i \hat t = 2$,  $\kappa^\ast_{i} = \frac{ \hat t \hat c_i}{ \hat p_d} \kappa_i = 1$, which is consistent with scaling above. Following the derivation of the two-scale macroscopic model outlined in Section~\ref{sec:macro_model} we obtain the following dimensionless homogenised system
\begin{equation}
\begin{aligned}
&\theta_e \partial_t c_e - \nabla\cdot ( D^{\rm hom}_e \nabla c_e) = \frac 1 {|Y|} \int_\Gamma (  b_e^\ast\,  r_b  - a_e^\ast c_e r_f)  d{\sigma_y} && \text{ in } \Omega, \\
&c_e  = 1 && \text{ on } \partial\Omega_D, \\ 
&D^{\rm hom}_e\,  \nabla c_e \cdot \nu = 0 && \text{ on } \partial \Omega/\partial\Omega_D, \\ 
&\partial_t c_i - \nabla_y\cdot ( D_i^\ast \nabla_y c_i) = 0  && \text{ in } \Omega\times Y_i, \\
&D_i^\ast \nabla_y c_i \cdot \nu = \gamma_i^\ast\,  p_a - \kappa_i^\ast c_i && \text{ in } \Omega \times\Gamma, \\
\end{aligned}
\end{equation}
where $\theta_e = |Y_e|/|Y|$,  and $D_{e,ij}^{\rm hom}= |Y|^{-1} \int_{Y_e} \big[ D_{e, ij}^\ast + (D_{e}^\ast \nabla_y w^j(y))_i \big] dy$ and $w^j$ are solutions of the unit cell problems
\begin{equation}\label{eq:cell_problem_bio}
\begin{aligned}
&{\rm div}_y ( D_e^\ast(\nabla_y w^j + e_j)) = 0 && \text{ in } Y_e,  \quad \int_{Y_e} w^j(y) dy = 0, \\
&  D_e^\ast(\nabla_y w^j + e_j) \cdot \nu = 0 && \text{ on } \Gamma,  \quad w^j \;  \; \; Y-\text{periodic},
\end{aligned}
\end{equation}
together with the dynamics of receptors on the cell membrane $\Omega\times\Gamma$
\begin{equation} 
\begin{aligned}
& \partial_t r_f - \nabla_{\Gamma,y} \cdot (D_{f}^\ast \nabla_{\Gamma, y}\,  r_f) = - a_{e}^\ast c_e r_f + b_{e}^\ast\,  r_b, \\
& \partial_t r_b - \nabla_{\Gamma,y} \cdot (D_{b}^\ast \nabla_{\Gamma, y} \, r_b) =  \phantom{  f_r(r_f)+} a_{e}^\ast c_e r_f    -  b_{e}^\ast\,  r_b  - a_i^\ast\,  r_b  p_d+ b_{i}^\ast p_a, \\
& \partial_t p_d - \nabla_{\Gamma,y} \cdot (D_{d}^\ast \nabla_{\Gamma,y} \, p_d) =   - a_i^\ast\,  r_b  p_d + b_i^\ast p_a , \\
& \partial_t p_a - \nabla_{\Gamma,y} \cdot (D_{a}^\ast \nabla_{\Gamma,y} \, p_a) = \phantom{ f_p(p_d) }   a_i^\ast\,  r_b p_d  -  b_i^\ast p_a  - \gamma_{i}^\ast  p_a+ \kappa_{i}^\ast c_i.
\end{aligned}
\end{equation}
The dimensionless parameter values are
\begin{equation}\label{param_values}
\begin{aligned}
& D_e^\ast=  10^{-2}, \quad   D_i^\ast = 10, \quad  D^\ast_{f} = D^\ast_{b} = D^\ast_d= D^\ast_a= 10^{-2}, \\
& a_e^\ast = 100, \quad b_e^\ast = 5, \quad a_i^\ast =  6\cdot 10^3, \quad b_i^\ast = 10, \quad  \gamma^\ast_{i} =  2, \quad \kappa^\ast_{i} = 1. 
\end{aligned}
\end{equation} 
Scaling the initial conditions  appropriately  yields the nondimensional initial values 
 \begin{equation} \label{eqn:bio_ic}
\begin{aligned}
&  c^\ast_{i0}(x,y) = 1+c_{i,1}(x) c_{i,2}(y),  \;\\
&   r^\ast_{f0}(x,z) = 0.17(1+r_{f,1}(x) r_{f,2}(z)),\;  p^\ast_{d0}(x,z) = 0.065(1+ p_{d,1}(x)p_{d,2}(z)),
\end{aligned}
\end{equation} 
for $x\in\Omega$, $y\in Y_i$ and $z\in\Gamma$,  with all the remaining initial conditions taken to be zero. The functions $c^\ast_{i0}, r^\ast_{f0}$ and $p^\ast_{d0}$ correspond to scaled, nonnegative perturbations  
\changes{
 of the initial conditions 
\begin{equation}
\begin{aligned}
 c_{i0} = 10^{-7}\,  {\rm M}  = 10^{-4} \,\frac{\rm mol}{\rm m^3},  \;  \;  r_{f0} = 17 \cdot 10^{-11} \, \frac{\rm mol}{\rm m^2}, \; \; p_{d0} =  
  6.5 \cdot 10^{-11} \, \frac{\rm mol}{\rm m^2}, 
\end{aligned}
\end{equation} 
 considered in \cite{Garcia_2014}.  } %

\subsection{Simulations of macroscopic model  in biologically relevant regimes}
We  illustrate the influence that the geometry of the periodic cell in which we solve for the effective homogenised diffusion tensor $D^{\rm hom}$ as well as the associated geometry of the (biological) cells $Y_i$ and membranes $\G$ have on the macroscopic dynamics of signalling molecules (ligands). To this end we consider two different geometries for the microstructure, specifically we let $Y=[-2,2]^2$ and consider  either elliptical cells with 
\[
Y_i=\left\{x\in Y \, | \;  0.26x_1^2+5x_2^2<1\right\},
\]
 i.e., an ellipse centred at $(0,0)$ with major and minor axes of approximate length 1.96   and 0.45 respectively or cells whose shape is defined by
 \begin{equation}
 \label{eqn:Dziuk_cell_shape}
 Y_i=\left\{x\in Y \, |  \; (x_1+0.2-x_2^2)^2+x_2^2<1\right\}.
 \end{equation}
To obtain the homogenised diffusion tensor we solve the cell problems corresponding to \eqref{eq:cell_problem_bio} on $Y_e=[-2,2]^2/Y_i$ for the two different cell geometries. 
For the elliptical cell geometry we used a mesh with 1039514 DOFs and for the other cell geometry we used a mesh with 1008834 DOFs.  Figure \ref{fig:w_j} shows the numerical simulation  results for the solution  $w^2$ of the 'unit cell' problems  \eqref{eq:cell_problem_bio}  on the two different geometries. The resulting homogenised diffusion tensor is given by
\[
D^{\rm hom}_{h,e}=\left[\begin{array}{cc}
8.167\cdot10^{-3}&0\\
0&1.841\cdot10^{-3}
\end{array}
\right]
\]
for the case of the ellipse and
\[
D^{\rm hom}_{h,e}=\left[\begin{array}{cc}
6.556\cdot10^{-3}&0\\
0&6.149\cdot10^{-3}
\end{array}
\right]
\]
for the geometry specified in \eqref{eqn:Dziuk_cell_shape}. 
As expected due to the large aspect ratio of the ellipse the resulting homogenised diffusion tensor exhibits stronger anisotropy than for the other cell shape.
  \begin{figure}[htbp]
\includegraphics[trim = 0mm 40mm 0mm 40mm,  clip, width=\linewidth]{./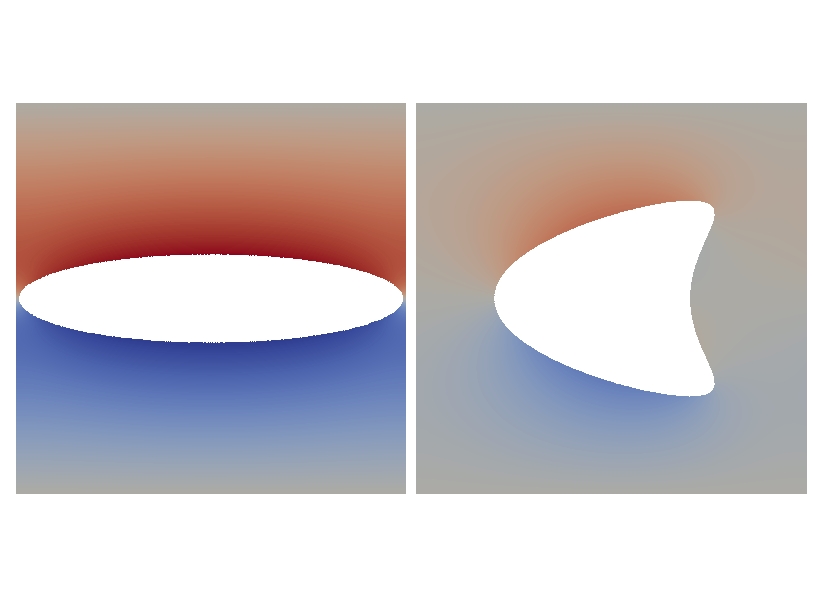}
\caption{Solutions $w^2$ of the cell problem \eqref{eq:cell_problem_bio} for the two different $Y_e$ domains considered.}
  \label{fig:w_j}
  \end{figure}
For the tissue we set $\Omega=[0,0.1]^2$ and take 
\[
\partial\Omega_D=\{x\in\partial\Omega \, |\;  \max\{x_1,x_2\}<5\cdot10^{-2}\},
\]
modelling a constant source of ligands from the south west corner of the domain. As mentioned above on the remainder of the boundary $\partial \Omega$
 we consider  zero-flux boundary condition for $c_e$. For the initial data we set the perturbations c.f., \eqref{eqn:bio_ic} to be of the form
\[
\begin{aligned}
&c_{i,1}(x) c_{i,2}(y) = f_{c_{i0}}(x,y)=0.95\sin\left(\pi\left(2y_1+\frac{y_2}{2}\right)\right)\sin\left(5{\pi| x|}\right),\\
&r_{f,1}(x) r_{f,2}(y)= f_{r_{f0}}(x,y)=0.95\cos\left(\pi\left( y_1+4{y_2}\right)\right)\cos\left(30{\pi| x|}\right),\\
&p_{d,1}(x) p_{d,2}(y) =f_{p_{d0}}(x,y)=0.95\cos\left(\pi\left(2y_1+\frac{y_2}{2}\right)\right)\cos\left(10{\pi|x|}\right), 
\end{aligned}
\]
for $x\in \Omega$ and $y \in Y$. 
For the approximation we used a triangulation $\O_h$ with 1089 DOFs, the triangulation $Y_{h,i}$ of the ellipse had 81 DOFs and the triangulation $Y_{h,i}$ of the domain given by \eqref{eqn:Dziuk_cell_shape} had 89 DOFs, the induced surface triangulations $\G_h$ had 32 and 33 DOFs respectively. For the timestep we used a value of $2\cdot10^{-3}$.

Figures \ref{FIG:E1} and \ref{FIG:E2} show results of the simulation at $t=10,100,200$ and $250$ with the elliptical cell geometry whilst Figure \ref{FIG:D} shows results of the simulation at the same times with the cell geometry given by \eqref{eqn:Dziuk_cell_shape}. In each Figure we also include the microscopic solutions at the DOFs with macroscopic coordinates $(0,0), (0.05,0.05)$ and $(0.1,0.1)$ with the macroscopic DOF associated with each set of microscopic results indicated by a grey line in the Figure to the corresponding point in the macroscopic domain.
Focusing on the differences between the two sets of results, we see that the strongly anisotropic homogenised diffusion tensor associated with the elliptical cell geometry leads to faster transport in the horizontal direction and slower vertical transport. As a result for $t=200$, see Figure \ref{FIG:E_200},  there are very few bound receptors present on the cell  at the macroscopic point $(0.1,0.1)$ and it is only by $t=250$ that bound receptors are clearly visible on this cell. On the other hand the almost isotropic homogenised diffusion tensor associated with the cell geometry specified in  \eqref{eqn:Dziuk_cell_shape} leads to equally fast vertical and horizontal transport and by $t=200$ there are clearly  a large number of bound receptors present on the cell membrane at the macroscopic point $(0.1,0.1)$, c.f., Figure \ref{FIG:D_200}. More generally, in both cases we see significant heterogeneity at the microscopic level in the concentrations of the different membrane resident species at different times  during the simulation motivating the multiscale modelling approach we employ.

\begin{figure}
  \centering
    \begin{subfigure}[b]{.6\textwidth}
    \centering
    \includegraphics[trim = 0mm 0mm 0mm 0mm,  clip, width=\linewidth]{./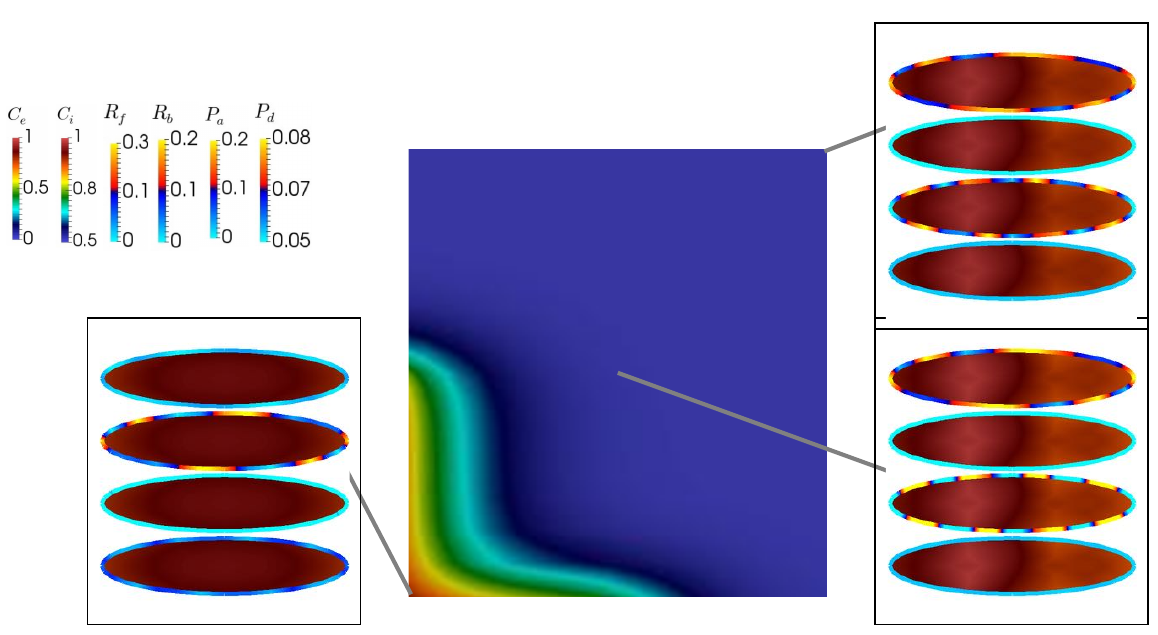}
  \caption{$t=10$}
  \label{FIG:E_10}
  \end{subfigure}
    \begin{subfigure}[b]{.6\textwidth}
    \centering
    \includegraphics[trim = 0mm 0mm 0mm 0mm,  clip, width=\linewidth]{./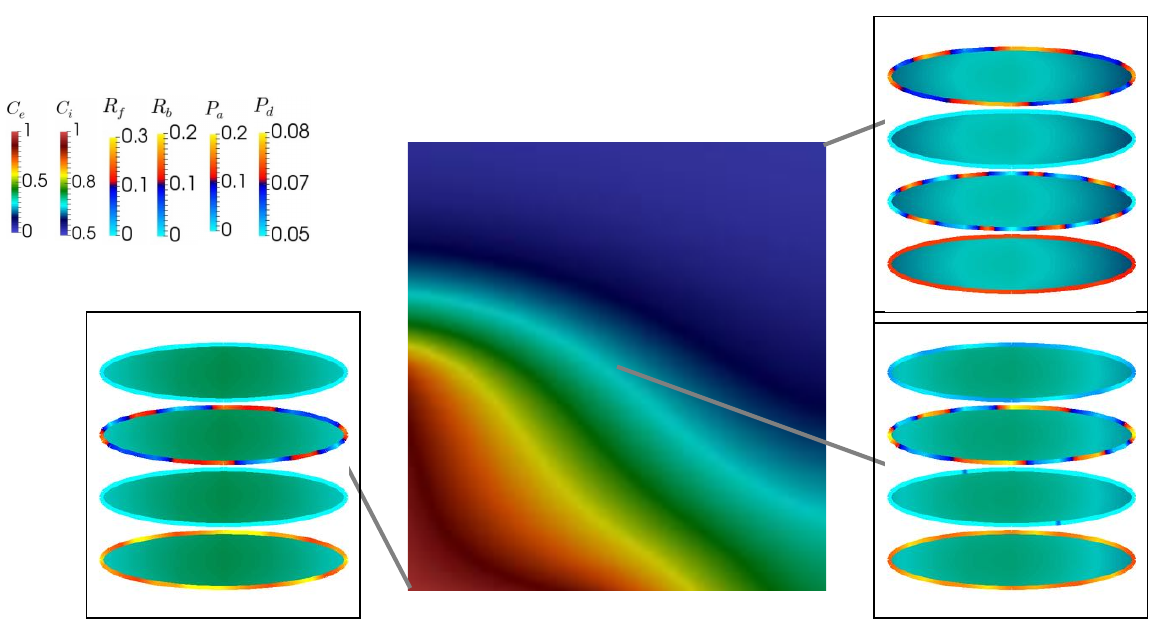}
  \caption{$t=100$}
  \label{FIG:E_100}
  \end{subfigure}
  \caption{Results of the simulation of Section~\ref{sec:bio_model} with the elliptical cell geometry. The inset in each subfigure show the microscopic solutions at the corresponding macroscopic DOF (grey line). The macroscopic domain is shaded by $C_e$ whilst in each inset the cell interior is shaded by $C_i$ and reading from top to bottom, the membrane is shaded by $R_f,R_b,P_d$ and $P_a$ respectively. For further details see text.}
  \label{FIG:E1}
  \end{figure}
  \begin{figure}
  \centering
    \begin{subfigure}[b]{.6\textwidth}
    \centering
    \includegraphics[trim = 0mm 0mm 0mm 0mm,  clip, width=\linewidth]{./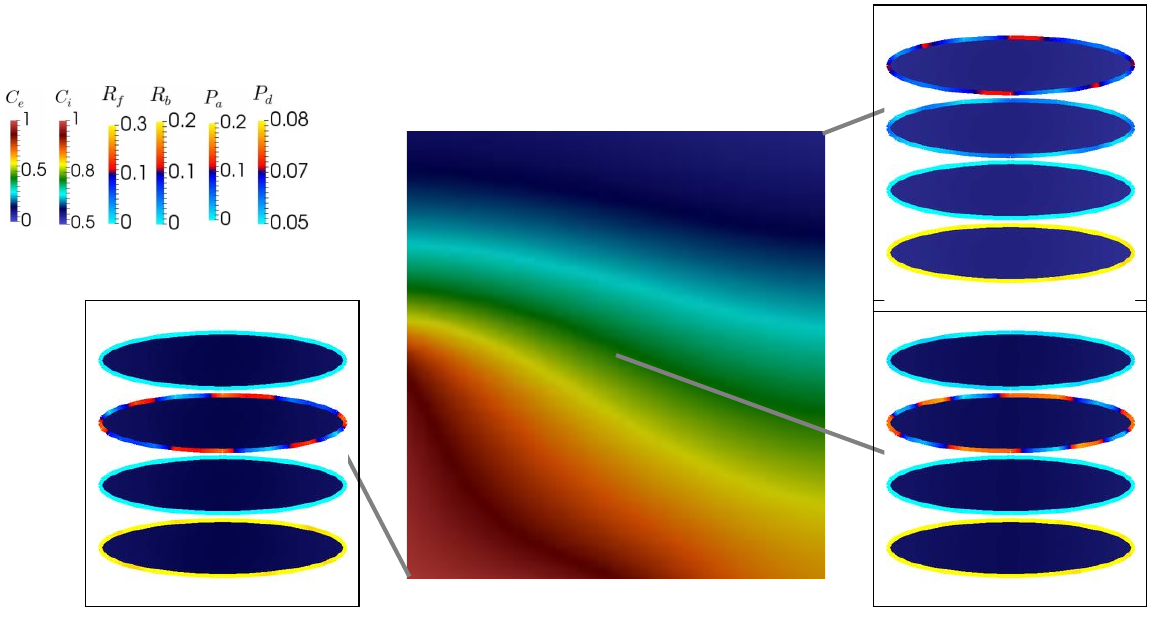}
  \caption{$t=200$}
  \label{FIG:E_200}
  \end{subfigure}
    \begin{subfigure}[b]{.6\textwidth}
    \centering
    \includegraphics[trim = 0mm 0mm 0mm 0mm,  clip, width=\linewidth]{./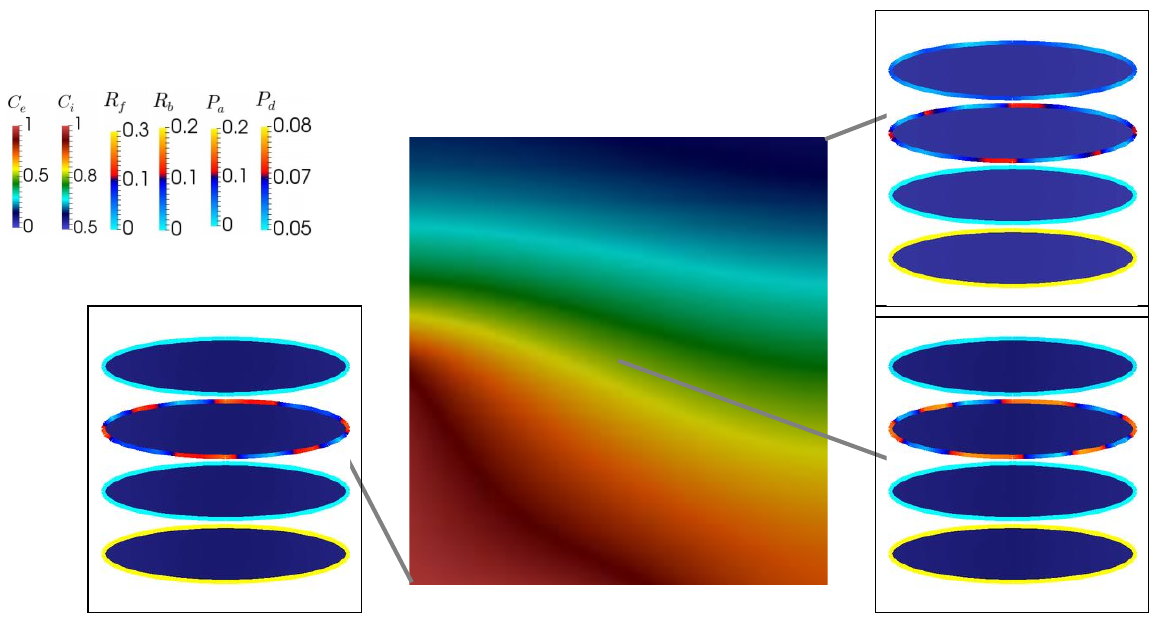}
  \caption{$t=250$}
  \label{FIG:E_250}
  \end{subfigure}
  \caption{Results of the simulation of Section~\ref{sec:bio_model} with the elliptical cell geometry. The inset in each subfigure show the microscopic solutions at the corresponding macroscopic DOF (grey line). The macroscopic domain is shaded by $C_e$ whilst in each inset the cell interior is shaded by $C_i$ and reading from top to bottom, the membrane is shaded by $R_f,R_b,P_d$ and $P_a$ respectively. For further details see text.}
  \label{FIG:E2}
\end{figure}
\begin{figure}
    \begin{subfigure}[b]{\textwidth}
    \centering
    \includegraphics[trim = 0mm 0mm 0mm 0mm,  clip, width=\linewidth]{./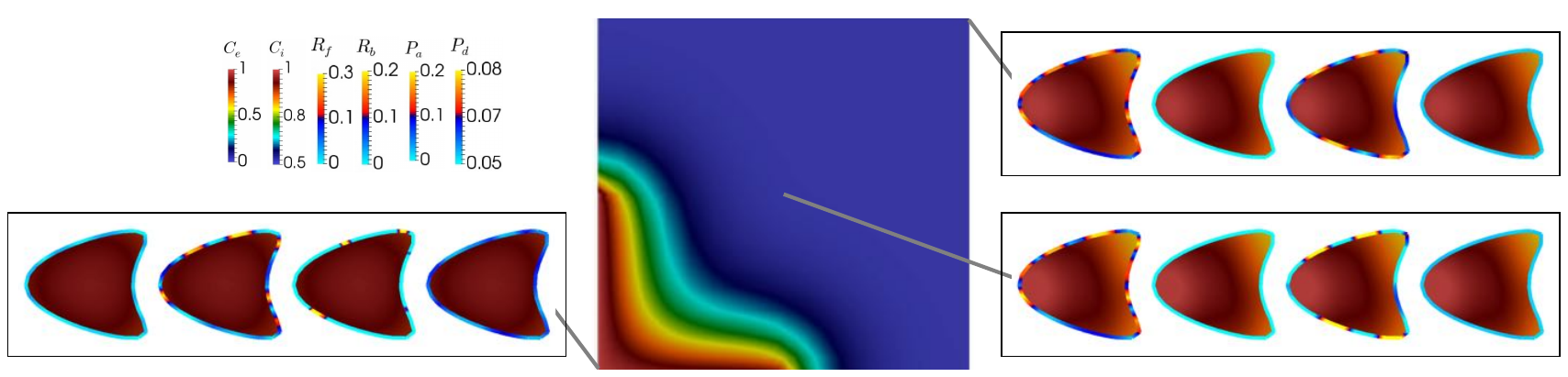}
  \caption{$t=10$}
  \label{FIG:D_10}
  \end{subfigure}
    \begin{subfigure}[b]{\textwidth}
    \centering
    \includegraphics[trim = 0mm 0mm 0mm 0mm,  clip, width=\linewidth]{./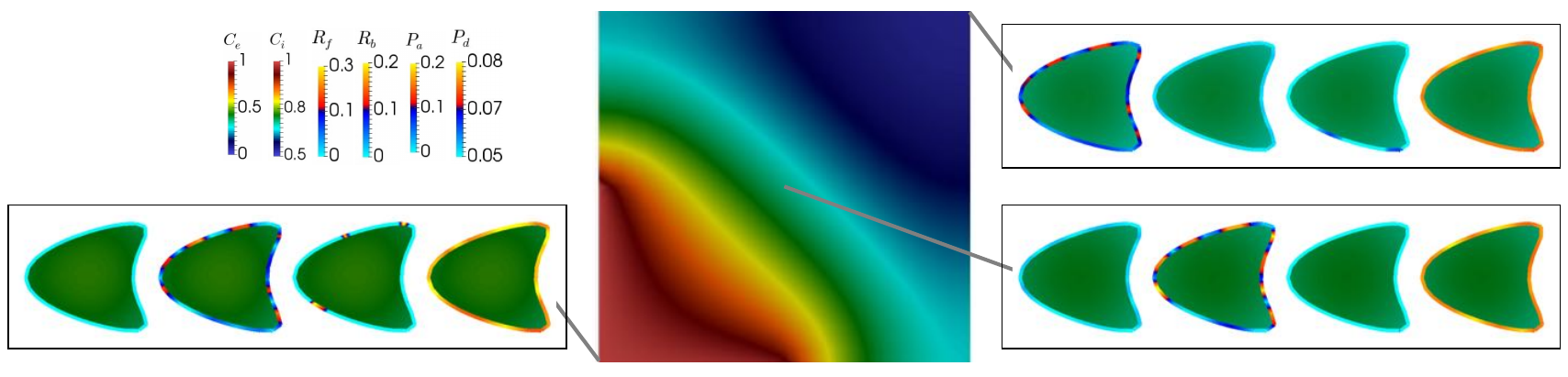}
  \caption{$t=100$}
  \label{FIG:D_100}
  \end{subfigure}
    \begin{subfigure}[b]{\textwidth}
    \centering
    \includegraphics[trim = 0mm 0mm 0mm 0mm,  clip, width=\linewidth]{./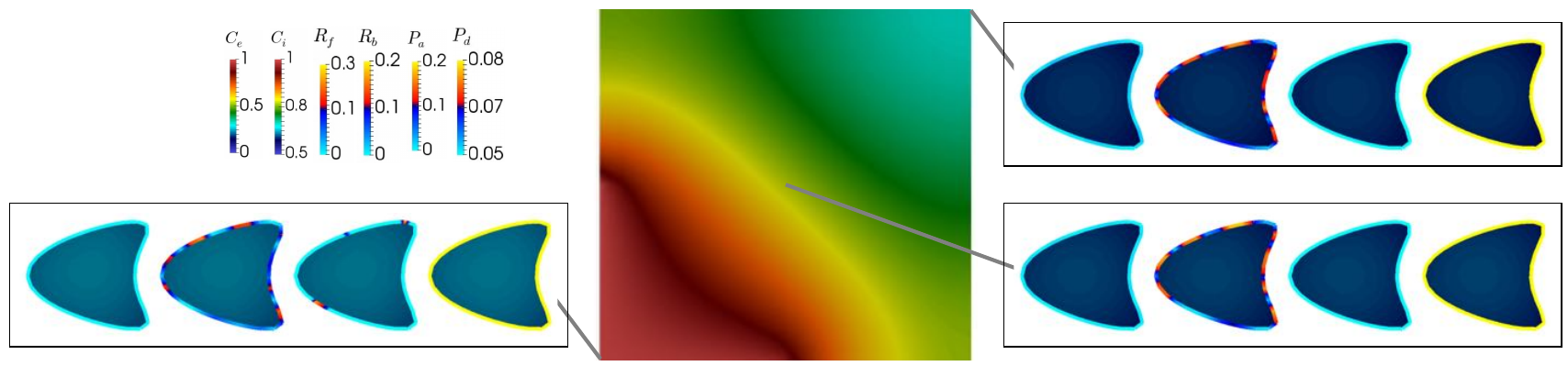}
  \caption{$t=200$}
  \label{FIG:D_200}
  \end{subfigure}
    \begin{subfigure}[b]{\textwidth}
    \centering
    \includegraphics[trim = 0mm 0mm 0mm 0mm,  clip, width=\linewidth]{./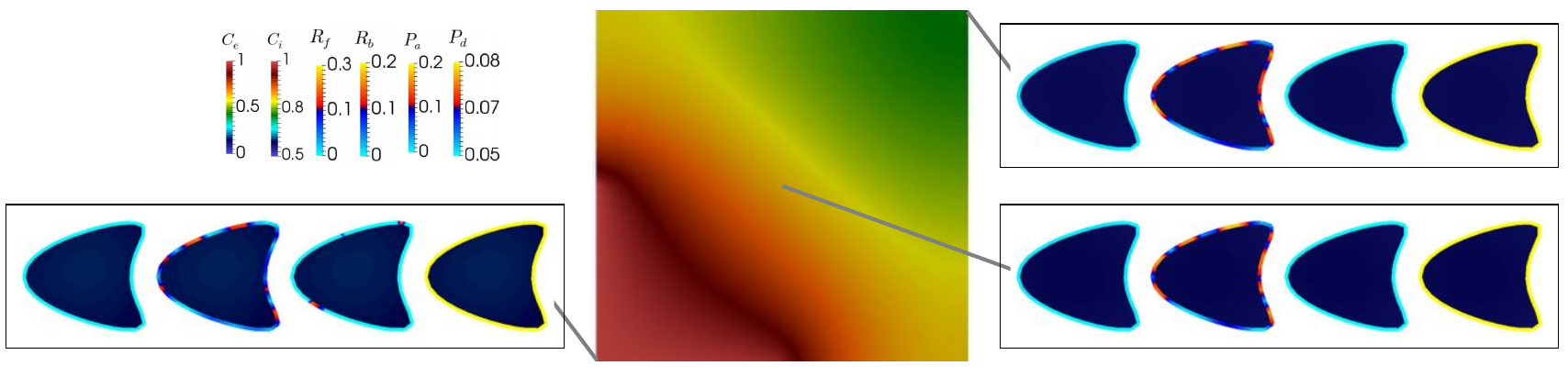}
  \caption{$t=250$}
  \label{FIG:D_250}
  \end{subfigure}
  \caption{Results of the simulation of Section~\ref{sec:bio_model} with the cell geometry given by (\ref{eqn:Dziuk_cell_shape}). The inset in each subfigure show the microscopic solutions at the corresponding macroscopic DOF (grey line). The macroscopic domain is shaded by $C_e$ whilst in each inset the cell interior is shaded by $C_i$ and reading from left to right, the membrane is shaded by $R_f,R_b,P_d$ and $P_a$ respectively. For further details see text.}
  \label{FIG:D}
\end{figure}

\bibliographystyle{plain}
\bibliography{Master_bib}

\changes{ 
\appendix

\section{Generalised trace inequality}
The trace inequality for $v\in W^{1,q}(Y_l)$, with $1<q<\infty$ and $l=e,i$, reads  
\begin{equation}\label{trace_gen} 
\|v\|_{L^r(\Gamma)}  \leq \mu \big[ \|v\|^{1-\lambda}_{L^q(Y_l)} \| v \|^\lambda_{W^{1,q}(Y_l)} +  \|v\|^{(1-1/r)(1-\lambda)}_{L^q(Y_l)} \|v \|^{1/r + \lambda(1-1/r)}_{W^{1,q}(Y_l)} \big]
\end{equation}
for $\lambda = \frac { {\rm dim}(Y_l)(r-q)}{q(r-1)}$ and $\mu= \mu(r,q, Y_l) >0$, see e.g.\ \cite{Galdi_2011}.

\section{Two-scale convergence and periodic unfolding operator}\label{appendix_A1}

We recall the definition and some properties of two-scale convergence and the unfolding operator.

\begin{definition}[Two-scale convergence]\cite{Allaire_1992,   Lukkassen_2002,  Nguetseng_1989}
A sequence $\{u^\ve\}$ in
$L^p(\Omega)$, with $1<p<\infty$,  is  two-scale convergent to $u\in L^p(\Omega\times Y)$ 
if 
$$
  \lim_{\ve\to 0}\int_\Omega u^\ve(x)\phi\bigg(x,\frac{x}{\ve}\bigg)dx
	= \int_{\Omega\times Y} u(x,y)\phi(x,y) dydx
$$
for any $\phi\in L^q(\Omega; C_{\rm per}(Y))$, with $1/p+1/q=1$.
\end{definition}

\begin{theorem}\cite{Allaire_1996, Neuss-Radu_1996}
Let  $\{v^\ve\} \subset L^2(\Gamma^\ve)$ satisfies $\ve \|v^\ve\|^2_{L^2(\Gamma^\ve)} \leq C$, 
then there exists a two-scale limit $v\in L^2(\Omega; L^2(\Gamma))$ such that, up to a subsequence,   
$v^\ve$  two-scale converge to $v$ in the sense that 
$$
  \lim_{\ve\to 0} \ve \int_{\Gamma^\ve} v^\ve(x)\phi\bigg(x,\frac{x}{\ve}\bigg)d\sigma^\ve
	= \int_{\Omega\times \Gamma} v(x,y)\phi(x,y) d\sigma_ydx
$$
for any $\phi\in C_0(\Omega; C_{\rm per}(Y))$.
\end{theorem}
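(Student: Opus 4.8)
The plan is to follow the standard functional-analytic route for establishing two-scale compactness on oscillating surfaces. For each fixed $\ve$ I would introduce the linear functional $L_\ve$ on the test space $C_0(\O; C_{\rm per}(Y))$ defined by
$$
L_\ve(\phi) := \ve \int_{\G^\ve} v^\ve(x)\, \phi\big(x, x/\ve\big)\, d\sigma^\ve,
$$
and the goal is to show that, along a suitable subsequence, $L_\ve(\phi)$ converges for every admissible $\phi$ to a limit that can be represented as integration against some $v \in L^2(\O\times\G)$.

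First I would establish uniform boundedness of the family $\{L_\ve\}$. Applying the Cauchy--Schwarz inequality on $\G^\ve$ gives
$$
|L_\ve(\phi)| \leq \Big(\ve \int_{\G^\ve} |v^\ve|^2 \, d\sigma^\ve\Big)^{1/2} \Big(\ve \int_{\G^\ve} |\phi(x, x/\ve)|^2 \, d\sigma^\ve\Big)^{1/2}.
$$
The first factor is bounded by $\sqrt{C}$ directly from the hypothesis $\ve\|v^\ve\|^2_{L^2(\G^\ve)} \leq C$. For the second factor I would invoke the surface mean-value (periodic averaging) lemma, namely that for any $\psi \in C_0(\O; C_{\rm per}(Y))$ one has
$$
\ve \int_{\G^\ve} \psi\big(x, x/\ve\big)\, d\sigma^\ve \;\longrightarrow\; \int_\O \int_\G \psi(x,y)\, d\sigma_y\, dx \qquad \text{as } \ve \to 0,
$$
applied to $\psi = |\phi|^2$. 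This yields the uniform (in $\ve$) estimate $|L_\ve(\phi)| \leq \sqrt{C}\,\big(\|\phi\|_{L^2(\O\times\G)} + o(1)\big)$.

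This averaging lemma is the technical heart of the argument and the step I expect to be the main obstacle. It encodes the correct $\ve$-scaling of the surface measure, the factor $\ve$ compensating for the codimension-one nature of $\G^\ve$, and would be proved by decomposing $\O$ into the $\ve$-periodic cells $\ve(Y+\xi)$, approximating the continuous periodic integrand by its value at the cell centres, and recognising the resulting sum as a Riemann sum for the right-hand integral; the error is then controlled by the uniform continuity of $\psi$ on compact sets, while the contribution of the boundary-layer cells meeting $\partial\O$ vanishes as $\ve \to 0$. Alternatively, since the boundary unfolding operator $\T^\ve_\Gamma$ has already been introduced, the same convergence follows immediately by rewriting the surface integral through $\T^\ve_\Gamma$ and using its norm-preservation (up to a vanishing boundary correction) and strong convergence properties on continuous integrands.

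With uniform boundedness in hand, I would exploit the separability of $C_0(\O; C_{\rm per}(Y))$: choosing a countable dense subset and applying a diagonal extraction, $L_\ve(\phi)$ converges along a common subsequence for every element of the dense set, and the uniform bound then propagates the convergence to all $\phi$, defining a limit functional $L$ satisfying $|L(\phi)| \leq \sqrt{C}\,\|\phi\|_{L^2(\O\times\G)}$. Since $C_0(\O; C_{\rm per}(Y))$ is dense in $L^2(\O\times\G)$, the functional $L$ extends uniquely to a bounded linear functional on the Hilbert space $L^2(\O\times\G) = L^2(\O; L^2(\G))$, and the Riesz representation theorem furnishes a unique $v \in L^2(\O; L^2(\G))$ with $L(\phi) = \la v, \phi\ra_{\O\times\G}$. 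This $v$ is precisely the asserted two-scale limit, which completes the proof.
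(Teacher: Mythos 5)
The paper does not prove this theorem at all --- it is quoted verbatim from \cite{Allaire_1996, Neuss-Radu_1996} as a known compactness result, so there is no internal proof to compare against. Your argument (uniform bounds on the functionals $L_\ve$ via Cauchy--Schwarz, the $\ve$-scaled surface averaging lemma for oscillating test functions, diagonal extraction over a countable dense subset, and Riesz representation in $L^2(\Omega\times\Gamma)$) is precisely the standard proof given in those references, and it is correct, including your identification of the periodic surface averaging lemma as the technical core and its proof by a Riemann-sum decomposition over the cells $\ve(Y+\xi)$ with a vanishing boundary-layer contribution.
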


\begin{lemma}[Two-scale compactness \cite{Allaire_1992, Allaire_1996, Nguetseng_1989}]\label{lem1}
\begin{enumerate}[{\rm i.}]
\item If $\{u^\ve\}$ is bounded in $L^2(\Omega)$, there exists
a subsequence (not relabelled) such that $u^\ve \rightharpoonup u$ two-scale as $\ve\to 0$
for some function $u\in L^2(\Omega\times Y)$. 
\item If $u^\ve\rightharpoonup u$ weakly in $H^1(\Omega)$ then 
$u^\ve \rightharpoonup u$  and $\nabla u^\ve \rightharpoonup \nabla u+\nabla_y u_1$ two-scale, where 
$u_1\in L^2(\Omega;H^1_{\rm per}(Y)/\mathbb R)$.
\item If $\|u^\ve\|_{H^1(\Omega_e^\ve)} \leq C $
   and $[u^\ve]^\sim$ and $[\nabla u^\ve]^\sim$ are extensions by zero from $\Omega_e^\ve$ into $\Omega$ of $u^\ve$ and $\nabla u^\ve$ respectively, then, up to a subsequence, $[u^\ve]^\sim$ and  $[\nabla u^\ve]^\sim$ converge two-scale to $u \, \chi $ and $[\nabla u + \nabla_y u_1]\, \chi$ respectively, where $\chi = \chi(y)$ is the characteristic function of  $Y_e$, $u \in H^1(\Omega)$ and $u_1\in L^2(\Omega; H^1_{\rm per} (Y_e)/\mathbb R)$. 
   \item Let $\{ w^\ve\} \subset H^1(\Gamma^\ve)$ satisfies 
   $$
   \ve\|w^\ve\|^2_{L^2(\Gamma^\ve)} +  \ve\|\ve \nabla_\Gamma  w^\ve\|^2_{L^2(\Gamma^\ve)}  \leq C,
   $$
  then there exists  a function $w\in L^2(\Omega; H^1(\Gamma))$ such that, up to a subsequence,  
  $w^\ve$ and $\ve \nabla_\Gamma w^\ve$ two-scale converge  to $w$ and $\nabla_{\Gamma, y} w$, respectively. 
   \end{enumerate}
\end{lemma}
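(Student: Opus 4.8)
The plan is to treat the four parts in the order (i), (iv), (ii), (iii), since (i) and (iv) are the primitive compactness statements from which the gradient-structure results (ii) and (iii) follow. For part (i), I would exploit the separability of the test space $L^q(\Omega; C_{\rm per}(Y))$, where $1/p + 1/q = 1$ with $p=2$. Fixing a countable dense family $\{\phi_k\}$ in this space, I would observe that the linear functionals $\mu^\ve(\phi) := \int_\Omega u^\ve(x)\, \phi(x, x/\ve)\, dx$ are uniformly bounded, because $\|\phi(\cdot, \cdot/\ve)\|_{L^q(\Omega)} \to \|\phi\|_{L^q(\Omega\times Y)}$ and $\{u^\ve\}$ is bounded in $L^2(\Omega)$. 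A diagonal extraction then yields a subsequence along which $\mu^\ve(\phi_k)$ converges for every $k$, hence for every $\phi$ by density; the limit functional is bounded and, once it is seen to extend to $L^q(\Omega\times Y)$, the Riesz representation theorem produces the two-scale limit $u\in L^2(\Omega\times Y)$.

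Part (iv) follows the same duality scheme but on the oscillating surfaces, and here I would invoke the surface two-scale compactness theorem already recorded above: the bound $\ve\|w^\ve\|^2_{L^2(\Gamma^\ve)} \leq C$ alone delivers a two-scale limit $w \in L^2(\Omega; L^2(\Gamma))$. To upgrade $w$ to $L^2(\Omega; H^1(\Gamma))$ and to identify the limit of $\ve\nabla_\Gamma w^\ve$, I would test against surface vector fields $\Psi \in C_0(\Omega; C^1_{\rm per}(Y; T\Gamma))$, integrate by parts on $\Gamma^\ve$ using the tangential divergence theorem, and pass to the two-scale limit; the boundedness of $\ve\|\ve\nabla_\Gamma w^\ve\|^2_{L^2(\Gamma^\ve)}$ guarantees that the limit of $\ve\nabla_\Gamma w^\ve$ exists and, by the integration-by-parts identity, coincides with the tangential gradient $\nabla_{\Gamma,y} w$, forcing $w(x,\cdot) \in H^1(\Gamma)$ for a.e.\ $x$.

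For part (ii) I would apply (i) separately to the bounded sequences $\{u^\ve\}$ and $\{\nabla u^\ve\}$ in $L^2(\Omega)$. Since $u^\ve \rightharpoonup u$ weakly in $H^1(\Omega)$, its two-scale limit is the $y$-independent function $u$. Writing $\xi(x,y)$ for the two-scale limit of $\nabla u^\ve$, I would test against fields $\psi(x)\Phi(y)$ with $\psi\in C_0^\infty(\Omega)$ and ${\rm div}_y\Phi=0$ periodic: integration by parts moves the gradient onto the test function, and letting $\ve\to0$ shows $\int \xi \cdot \psi\Phi = \int \nabla u \cdot \psi\Phi$ whenever ${\rm div}_y\Phi=0$, so $\xi - \nabla u$ is $L^2$-orthogonal to all solenoidal periodic fields and therefore a $y$-gradient, i.e.\ $\xi = \nabla u + \nabla_y u_1$ with $u_1 \in L^2(\Omega; H^1_{\rm per}(Y)/\IR)$. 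Part (iii) is the perforated-domain analogue: I would first extend $u^\ve$ to $\bar u^\ve \in H^1(\Omega)$ using the bound \eqref{estim:extension}, apply (ii) to the extension, and then note that $[u^\ve]^\sim = \bar u^\ve\, \chi_{Y_e}(\cdot/\ve)$ and $[\nabla u^\ve]^\sim = \overline{\nabla u^\ve}\, \chi_{Y_e}(\cdot/\ve)$; multiplying a two-scale convergent sequence by the oscillating characteristic function $\chi_{Y_e}(\cdot/\ve)$ inserts the factor $\chi_{Y_e}(y)$ in the limit, which yields the stated limits.

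The main obstacle is the identification of the corrector structure in (ii): proving that $\xi - \nabla u$ is exactly a microscopic gradient requires the orthogonal decomposition of $L^2_{\rm per}(Y)^d$ into gradients and solenoidal fields, which must be handled with care on the periodic cell, and in (iv) the corresponding step requires the tangential analogue of this decomposition on $\Gamma$ together with a correct tangential integration-by-parts formula. The extension argument in (iii) is routine given \eqref{estim:extension}, but one must check that the weak-$H^1$ limit of the extension genuinely agrees with the intrinsic limit of $u^\ve$ on $Y_e$, which holds because extension and restriction coincide on the perforated domain.
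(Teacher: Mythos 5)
The paper offers no proof of Lemma~\ref{lem1}: it is recalled from the cited literature (Nguetseng, Allaire, Allaire et al.\ and Neuss-Radu for the surface case), so there is no in-paper argument to compare against. Your sketch is a correct reconstruction of those classical proofs: (i) is the duality-plus-diagonalisation argument, which rests on the admissibility of the test space, i.e.\ $\|\phi(\cdot,\cdot/\ve)\|_{L^2(\Omega)}\to\|\phi\|_{L^2(\Omega\times Y)}$ for $\phi\in L^2(\Omega;C_{\rm per}(Y))$; (ii) identifies the corrector by orthogonality of $\xi-\nabla u$ to divergence-free periodic fields; (iii) is the extension-plus-oscillating-cutoff argument; (iv) is the surface analogue via tangential integration by parts on the closed components of $\Gamma^\ve$, where the $\ve^{-1}$ scaling between $\nabla_\Gamma$ on $\ve(\Gamma+\xi)$ and $\nabla_{\Gamma,y}$ on $\Gamma$ is precisely what makes the assumed bound the right one. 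Three steps are glossed over and should be made explicit in a full write-up: the $y$-independence of the two-scale limit of $u^\ve$ in (ii) is not automatic from weak $H^1$ convergence but follows from testing $\nabla u^\ve$ against $\ve\psi(x)\Phi(x/\ve)$ and deducing $\nabla_y u_0=0$; in (iii) the passage of the factor $\chi_{Y_e}(\cdot/\ve)$ into the limit uses that a characteristic function of a periodic set is an admissible (strongly two-scale convergent) multiplier, and your route yields a corrector in $L^2(\Omega;H^1_{\rm per}(Y)/\IR)$ whose restriction to $Y_e$ gives the stated $u_1\in L^2(\Omega;H^1_{\rm per}(Y_e)/\IR)$ --- sufficient for the claim, though slightly less intrinsic than the perforated-domain statement; and in (iv) one must also check that the boundary-layer cells (those meeting $\partial\Omega$) do not contribute in the limit. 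None of these is a gap in substance; they are the standard technical lemmas from the cited references.
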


To define the unfolding operator, let $[z]$ for any $z\in\mathbb R^d$ denote the 
unique  combination $\sum_{i=1}^d k_ie_i$ with $k\in\mathbb Z^d$,   
such that $z-[z]\in Y$, where $e_i$ is the $i$th
canonical basis vector of $\mathbb R^d$.

\begin{definition}[Unfolding operator \cite{Cioranescu_2012}] \label{unfold}
Let $p\in[1,\infty]$ and $\phi\in L^p(\Omega)$.  The unfolding 
operator $\T^\ve$ is defined by $\T^\ve(\phi)\in L^p(\Omega\times Y)$, where
$$
  \T^\ve(\phi)(x,y) = \begin{cases}  \phi\bigg(\ve \bigg[\dfrac{x}{\ve}\bigg] + \ve y\bigg) 
	\quad & \text{for a.e. }(x,y) \in \tilde \Omega^\ve \times Y, \\
	0 & \text{for a.e. } x\in \Omega \setminus \tilde \Omega^\ve, \;   y \in Y, 
	\end{cases} 
$$
with  $\tilde \Omega^\ve = \bigcup_{\xi \in \Xi^\ve} \ve(Y+\xi)$.  \\
For $\psi\in L^p(\Omega^\ve_l)$, with $l=e,i$,  the unfolding operator
$\T^\ve_{Y_l}$ is defined by
$$
  \T^\ve_{Y_l}(\psi)(x,y) = \begin{cases} \psi\bigg(\ve \bigg[\dfrac{x}{\ve}\bigg] + \ve y\bigg) 
	\quad & \text{for a.e. }(x,y) \in \tilde \Omega^\ve  \times Y_l, \\
	0 & \text{for a.e. } x\in \Omega \setminus \tilde \Omega^\ve,  \; y \in Y_l, 
	\end{cases} 
$$
and $\T^\ve_{Y_l}(\psi) \in L^p(\Omega\times Y_l)$. \\
For $\psi\in L^p(\Gamma^\ve)$ the boundary unfolding operator
$\T^\ve_{\Gamma}$ is defined by
$$
  \T^\ve_{\Gamma}(\psi)(x,y) = \begin{cases} \psi\bigg(\ve \bigg[\dfrac{x}{\ve}\bigg] + \ve y\bigg) 
	\quad & \text{for a.e. }(x,y) \in \tilde \Omega^\ve  \times \Gamma, \\
	0 & \text{for a.e. } x\in \Omega \setminus \tilde \Omega^\ve,  \;  y \in \Gamma, 
	\end{cases} 
$$
and $\T^\ve_{\Gamma}(\psi) \in L^p(\Omega\times \Gamma)$. 
	\end{definition}

For any function $\psi$ defined on $\Omega_l^\ve$,  for $l=e, i$, we have $\T^\ve_{Y_l}(\psi)  
= \T^\ve ([\psi]^{\sim})|_{\Omega \times Y_l}$,  with $[\psi]^{\sim}$ denoting extension of $\psi$ by zero  into $\Omega\setminus \Omega_l^\ve$,  whereas  
for $\phi$ defined on $\Omega$, it holds that
$\T^\ve_{Y_l}(\phi|_{\Omega^\ve_l}) = \T^\ve (\phi)|_{\Omega \times Y_l}$. 

The following result relates two-scale convergence and weak convergence
involving the unfolding operator.
\begin{proposition}[\cite{Cioranescu_2008}]\label{prop.unfold}
Let $\{\psi^\ve\}$ be a bounded sequence in $L^p(\Omega)$ for some $1<p<\infty$.   
Then the following assertions are equivalent:
\begin{itemize} 
\item[\rm i.] $\{\T^\ve (\psi^\ve)\}$ converges weakly to $\psi$ in
 $L^p(\Omega\times Y)$.
\item[\rm ii.] $\{\psi^\ve\}$ converges two-scale to $\psi$, \; $\psi \in L^p(\Omega\times Y)$. 
\end{itemize}
\end{proposition}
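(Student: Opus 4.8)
The plan is to derive both implications from a single bridging identity that rewrites the two-scale test integral in terms of the unfolded functions, so that the equivalence reduces to a weak-times-strong passage to the limit. First I would record three elementary consequences of Definition~\ref{unfold} (using $|Y|=1$): the scaled-isometry bound $\|\T^\ve(w)\|_{L^p(\Omega\times Y)}=\|w\|_{L^p(\tilde\Omega^\ve)}\le\|w\|_{L^p(\Omega)}$, the multiplicativity $\T^\ve(uv)=\T^\ve(u)\,\T^\ve(v)$, and the exact integration formula $\int_{\Omega\times Y}\T^\ve(w)\,dy\,dx=\int_{\tilde\Omega^\ve}w\,dx$. The first of these immediately makes $\{\T^\ve(\psi^\ve)\}$ bounded in $L^p(\Omega\times Y)$, so weak compactness already guarantees subsequential weak limits exist; the content of the proposition is that any such limit coincides with the two-scale limit $\psi$.

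The key technical lemma I would isolate is that for a smooth oscillating test function the unfolded sequence converges \emph{strongly}: if $\Phi\in C_0^\infty(\Omega;C_{\rm per}(Y))$ then $\T^\ve\big(\Phi(\cdot,\cdot/\ve)\big)\to\Phi$ strongly in $L^q(\Omega\times Y)$. This rests on the computation $\T^\ve\big(\Phi(\cdot,\cdot/\ve)\big)(x,y)=\Phi\big(\ve[x/\ve]+\ve y,\,[x/\ve]+y\big)=\Phi\big(\ve[x/\ve]+\ve y,\,y\big)$, where $Y$-periodicity in the second slot removes the integer part $[x/\ve]\in\mathbb Z^d$, followed by the pointwise convergence $\ve[x/\ve]+\ve y\to x$ together with continuity of $\Phi$ in the first variable and dominated convergence. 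Combining this with multiplicativity and the integration formula applied to $w=\psi^\ve\,\Phi(\cdot,\cdot/\ve)$ yields the bridging identity
\begin{equation*}
\int_\Omega \psi^\ve(x)\,\Phi(x,x/\ve)\,dx
= \int_{\Omega\times Y}\T^\ve(\psi^\ve)\,\T^\ve\big(\Phi(\cdot,\cdot/\ve)\big)\,dy\,dx
+ \int_{\Omega\setminus\tilde\Omega^\ve}\psi^\ve\,\Phi(\cdot,\cdot/\ve)\,dx,
\end{equation*}
in which the boundary-layer remainder is controlled via H\"older's inequality and vanishes as $\ve\to0$, since $|\Omega\setminus\tilde\Omega^\ve|\to0$ while $\{\psi^\ve\}$ is $L^p$-bounded and $\Phi$ is compactly supported.

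Given the identity, both directions become a weak-times-strong limit passage. For (ii)$\Rightarrow$(i) I would test against $\Phi$, use two-scale convergence on the left and the strong convergence of $\T^\ve(\Phi(\cdot,\cdot/\ve))$ against the weak limit of $\T^\ve(\psi^\ve)$ on the right, thereby identifying every subsequential weak limit with $\psi$ on the dense class $C_0^\infty(\Omega;C_{\rm per}(Y))$, hence on all of $L^q(\Omega\times Y)$; uniqueness of the limit upgrades the subsequence to the full sequence. For (i)$\Rightarrow$(ii) I would read the same identity in reverse: the right-hand integral converges to $\int_{\Omega\times Y}\psi\,\Phi\,dy\,dx$ by weak-strong convergence, so the defining relation for two-scale convergence holds for $\Phi$ in the dense class. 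The main obstacle I anticipate is precisely the strong-convergence lemma together with extending the conclusion to genuinely $L^q$-in-$x$ test functions: one must approximate such $\Phi$ by smooth functions, control the error uniformly in $\ve$ through the isometry bound, and confirm the boundary-layer remainder remains negligible under only $L^p$ control on $\psi^\ve$.
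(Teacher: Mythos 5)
The paper does not prove this proposition: it is imported verbatim from the unfolding literature (Cioranescu--Damlamian--Griso) and used as a black box, so there is no in-paper argument to compare against. Your proof is the standard one for this equivalence and is correct: the three elementary facts you isolate (the scaled isometry, multiplicativity, and the exact integration formula with the $\Omega\setminus\tilde\Omega^\ve$ correction) are precisely the properties the paper itself records in \eqref{relat_T}, the computation $\T^\ve\big(\Phi(\cdot,\cdot/\ve)\big)(x,y)=\Phi\big(\ve[x/\ve]+\ve y,\,y\big)\to\Phi(x,y)$ is right (periodicity kills the integer translate, and uniform continuity of $\Phi$ plus $|\Omega\setminus\tilde\Omega^\ve|\to0$ give strong $L^q$ convergence), and both implications then follow from weak--strong pairing as you describe. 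The only step you leave as an anticipated obstacle --- upgrading (i)$\Rightarrow$(ii) from the smooth dense class to all $\phi\in L^q(\Omega;C_{\rm per}(Y))$ --- is handled by the uniform-in-$\ve$ bound $\|\phi(\cdot,\cdot/\ve)-\Phi_n(\cdot,\cdot/\ve)\|_{L^q(\Omega)}\le\big(\int_\Omega\sup_{y\in Y}|\phi(x,y)-\Phi_n(x,y)|^q\,dx\big)^{1/q}$, which together with the $L^p$-boundedness of $\psi^\ve$ closes the density argument; with that supplied, the proof is complete.
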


We have the following  properties of the periodic unfolding operator and the boundary unfolding operator: 
\begin{equation}\label{relat_T}
\begin{aligned} 
&  \mathcal T^\ve_{Y_l} (F(u,v)) = F( \mathcal T^\ve_{Y_l} (u), \mathcal T^\ve_{Y_l} (v)), \;  \mathcal T^\ve_{Y_l} (v(t, x/\ve)) = v(t,y), \;  x\in \Omega_l^\ve, y \in Y_l,    \\
&  \mathcal T^\ve_\Gamma (F(u,v)) = F( \mathcal T^\ve_\Gamma (u), \mathcal T^\ve_\Gamma (v)), \;  \mathcal T^\ve_\Gamma (v(t, x/\ve))= v(t,y), \;  x\in \Omega_l^\ve,  y \in \Gamma,   t>0, \\
&|Y| \la v, u \ra_{\Omega_{i,T}^\ve} = \la \mathcal T^\ve_{Y_i} (v),  \mathcal T^\ve_{Y_i} (u) \ra_{Y_i \times \Omega_T},  \;   \; \; 
 |Y| \, \ve\,  \la v, u \ra_{\Gamma^\ve_T} = \la \mathcal T^\ve_\Gamma (v),  \mathcal T^\ve_\Gamma (u) \ra_{\Gamma\times \Omega_T }, 
\\
& \la \mathcal T^\ve_{Y_e} (v),  \mathcal T^\ve_{Y_e} (u) \ra_{Y_e \times \Omega_T} =|Y|  \la v, u \ra_{\Omega_{e, T}^\ve} - |Y|\la v, u \ra_{(\Omega_{e}^\ve\setminus \tilde \Omega^\ve)_T} ,\; \; \\
& \|\mathcal T^\ve_{Y_l} (\phi)\|_{L^p(\Omega_T\times Y_l)} \leq |Y|^{\frac 1 p } \|\phi\|_{L^p(\Omega_{l, T}^\ve)},  \\
& \T^\ve_{Y_l}: L^p(0,T;  W^{1,p}(\Omega^\ve_l) )\to L^p(\Omega_T; W^{1,p}(Y_l)), \\
&  \T^\ve_{\Gamma} : L^p(0,T; W^{1,p}(\Gamma^\ve)) \to L^p(\Omega_T; W^{1,p}(\Gamma)), \\
& \ve \T^\ve_{Y_l}(\nabla u) = \nabla_y \T^\ve_{Y_l}(u),  \; \;   |Y|  \la \ve^2 \nabla v,  \nabla u \ra_{\Omega_{i,T}^\ve} = \la \nabla_y \mathcal T^\ve_{Y_i} (v),  \nabla_y \mathcal T^\ve_{Y_i} (u) \ra_{Y_i \times \Omega_T }, \\
& |Y|\,  \ve \, \la \ve^2 \nabla_\Gamma v, \nabla_\Gamma u \ra_{\Gamma^\ve_T} = \la \nabla_{\Gamma, y}  \mathcal T^\ve_\Gamma (v),  \nabla_{\Gamma, y}  \mathcal T^\ve_\Gamma (u) \ra_{\Gamma \times \Omega_T }, \\
&\|\T^\ve_\Gamma (\psi)\|_{L^p(\Omega_T\times\Gamma)} \leq \ve^{\frac 1 p } |Y|^{\frac 1 p } \|\psi\|_{L^p(\Gamma^\ve_T)} \leq C ( \|\psi\|_{L^p(\Omega^\ve_{l,T})} + \ve \|\nabla \psi\|_{L^p(\Omega^\ve_{l,T})}), 
 \end{aligned} 
 \end{equation}
for   $u, v\in L^2(0,T; H^1(\Omega^\ve_l))$, where $l=e,i$,  or  $u, v\in L^2(0,T; H^1(\Gamma^\ve))$, $\phi \in L^p(\Omega_{l,T}^\ve)$,
$\psi \in L^p(0,T; W^{1,p}(\Omega^\ve_l))$  and $F$ is any linear or nonlinear function , see e.g.\ \cite{Cioranescu_2012, Cioranescu_2008, Graf_2014}. 

We now collect some results on the the convergence of the unfolding of sequences  of functions.
\begin{lemma}[\cite{Cioranescu_2008}] Let $1\leq p < \infty$. 
\begin{enumerate}[\rm i.]
\item If $\phi \in L^p(\Omega)$, then  $\T^\ve(\phi) \to \phi$ strongly in $L^p(\Omega\times Y)$.
\item Let $\{ \psi^\ve\} \subset L^p(\Omega)$,   with $\psi^\ve \to \psi$ strongly in $L^p(\Omega)$, then 
$\T^\ve(\psi^\ve) \to \psi$ strongly in $L^p(\Omega\times Y)$. 
\end{enumerate}
\end{lemma}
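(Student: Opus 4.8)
The plan is to prove both assertions by the standard two-step density argument, exploiting the linearity of the unfolding operator together with its contraction property $\|\T^\ve(\phi)\|_{\Lp{p}(\O\times Y)} \le |Y|^{1/p}\|\phi\|_{\Lp{p}(\O)}$; since $Y=[0,1]^d$ and hence $|Y|=1$, this is a genuine $\Lp{p}$-contraction. Both the norm bound and the commutation relation $\T^\ve(F(u,v)) = F(\T^\ve(u),\T^\ve(v))$ (which in particular gives linearity) are among the listed properties in \eqref{relat_T}.

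First I would establish (i) for continuous functions. For $\phi \in C(\overline\O)$ and for $x\in\tilde\O^\ve$, $y\in Y$, both $x$ and $\ve[x/\ve]+\ve y$ lie in the cube $\ve([x/\ve]+Y)$ of side $\ve$, so $|\ve[x/\ve]+\ve y - x| \le \ve\sqrt d$. Uniform continuity of $\phi$ then yields
\[
\sup_{(x,y)\in\tilde\O^\ve\times Y} |\T^\ve(\phi)(x,y) - \phi(x)| \to 0 \quad\text{as } \ve\to 0.
\]
On the boundary layer $\O\setminus\tilde\O^\ve$, where $\T^\ve(\phi)=0$, the Lipschitz regularity of $\partial\O$ guarantees $|\O\setminus\tilde\O^\ve| \to 0$, so the contribution of this region to $\|\T^\ve(\phi)-\phi\|_{\Lp{p}(\O\times Y)}^p$ tends to $0$ as well. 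Combining the two, $\T^\ve(\phi) \to \phi$ strongly in $\Lp{p}(\O\times Y)$.

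For general $\phi \in \Lp{p}(\O)$ I would invoke density of $C(\overline\O)$ in $\Lp{p}(\O)$. Given $\eta>0$, choose $\phi_\eta \in C(\overline\O)$ with $\|\phi-\phi_\eta\|_{\Lp{p}(\O)}<\eta$ and split
\[
\|\T^\ve(\phi)-\phi\|_{\Lp{p}(\O\times Y)} \le \|\T^\ve(\phi-\phi_\eta)\|_{\Lp{p}(\O\times Y)} + \|\T^\ve(\phi_\eta)-\phi_\eta\|_{\Lp{p}(\O\times Y)} + \|\phi_\eta - \phi\|_{\Lp{p}(\O\times Y)}.
\]
The contraction bound controls the first term by $\eta$; the continuous case sends the middle term to $0$; and the last term equals $\|\phi_\eta-\phi\|_{\Lp{p}(\O)}<\eta$, viewing $\phi,\phi_\eta$ as $y$-independent. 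Letting $\ve\to0$ and then $\eta\to0$ proves (i). For (ii) I would write $\|\T^\ve(\psi^\ve)-\psi\|_{\Lp{p}(\O\times Y)} \le \|\T^\ve(\psi^\ve-\psi)\|_{\Lp{p}(\O\times Y)} + \|\T^\ve(\psi)-\psi\|_{\Lp{p}(\O\times Y)}$, where the first term is $\le \|\psi^\ve-\psi\|_{\Lp{p}(\O)}\to0$ by the contraction bound and the second $\to0$ by (i).

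The only genuinely analytic step is the continuous case, and within it the main obstacle is controlling the boundary layer $\O\setminus\tilde\O^\ve$: one must verify that its measure vanishes as $\ve\to0$, which relies on the regularity of $\partial\O$. Everything else is soft functional analysis — linearity, the uniform $\Lp{p}$-contraction, and density of continuous functions — so the result follows along the lines of \cite{Cioranescu_2008, Cioranescu_2012}.
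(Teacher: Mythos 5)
Your proof is correct. The paper does not prove this lemma at all --- it is stated in Appendix B with a citation to \cite{Cioranescu_2008} --- and your two-step density argument (uniform continuity on $\tilde\Omega^\ve\times Y$ plus vanishing measure of the boundary layer for continuous functions, then the $L^p$-contraction $\|\T^\ve(\phi)\|_{L^p(\Omega\times Y)}\le |Y|^{1/p}\|\phi\|_{L^p(\Omega)}$ and density for the general case, with (ii) following by the triangle inequality) is precisely the standard proof given in that reference, so there is nothing to compare against and no gap to report.
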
 

\begin{theorem} [\cite{Cioranescu_2012, Graf_2014}]\label{conv_unfold}
\begin{enumerate}[{\rm i.}]
\item Let $\{\psi^\ve\}$ be a bounded sequence in $W^{1, p}(\Omega^\ve_e)$, for some
$1< p < \infty$. Then  there exist functions $\psi \in W^{1, p}(\Omega)$ and 
$\psi_1 \in L^p(\Omega; W^{1, p}_{\rm per}(Y_e)/\mathbb R)$ such that as $\ve\to 0$, 
up to a subsequence,  
\begin{align*} 
  & \mathcal T^\ve_{Y_e}(\psi^\ve) \rightharpoonup\psi 
	&& \text{weakly in } L^p(\Omega; W^{1,p}(Y_e)), \\
  & \mathcal T^\ve_{Y_e}(\psi^\ve) \to  \psi 
	&& \text{strongly  in } L^p_{\rm loc}(\Omega; W^{1,p}(Y_e)), \\
  & \mathcal T^\ve_{Y_e}(\nabla \psi^\ve) \rightharpoonup \nabla \psi  
	+ \nabla_y \psi_1 && \text{weakly in } L^p(\Omega\times Y_e). 
\end{align*} 
\item Let $\{\phi^\ve\}\subset W^{1, p}(\Omega^\ve_i)$,  for some
$1< p < \infty$,  satisfies
$$
\|\phi^\ve\|_{L^p(\Omega_i^\ve)} + \ve \|\nabla \phi^\ve\|_{L^p(\Omega_i^\ve)} \leq C.
$$ 
Then  there exists   $\phi \in L^p(\Omega; W^{1, p}(Y_i))$ such that as $\ve\to 0$, 
up to a subsequence,  
\begin{align*} 
  & \mathcal T^\ve_{Y_i}(\phi^\ve) \rightharpoonup\phi 
	&& \text{weakly in } L^p(\Omega\times Y_i), \\
  &\ve \mathcal T^\ve_{Y_i}(\nabla \phi^\ve) \rightharpoonup 
	 \nabla_y \phi && \text{weakly in } L^p(\Omega\times Y_i). 
\end{align*} 

\item Let $\{ w^\ve\} \subset H^1(\Gamma^\ve)$ satisfies 
$$
\ve \|w^\ve\|^2_{L^2(\Gamma^\ve)} + \ve \|\ve \nabla_\Gamma w^\ve\|^2_{L^2(\Gamma^\ve)}\leq C, 
$$
then there exists $w\in L^2(\Omega; H^1(\Gamma))$ such that as $\ve \to 0$, up to a subsequence,  
\begin{align*} 
  & \mathcal T^\ve_{\Gamma}(w^\ve) \rightharpoonup w 
	&& \text{weakly in } L^2(\Omega; H^1( \Gamma)), \\
  &\ve \mathcal T^\ve_{\Gamma}(\nabla_\Gamma w^\ve) \rightharpoonup 
	 \nabla_{\Gamma, y} w && \text{weakly in } L^2(\Omega\times \Gamma). 
\end{align*} 

\end{enumerate}
\end{theorem}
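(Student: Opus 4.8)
The plan is to prove all three assertions from a single template, exploiting the norm and commutation identities for the unfolding operators recorded in \eqref{relat_T}. In each case I would first convert the $\ve$-dependent a priori bound into a uniform bound for the \emph{unfolded} sequence in a fixed Bochner--Sobolev space over $\O\times Y_l$ (respectively $\O\times\G$), then extract a weakly convergent subsequence by reflexivity of $L^p$ for $1<p<\infty$, and finally identify the limit of the unfolded gradient by means of the commutation relation $\ve\,\T^\ve(\nabla u)=\nabla_y\T^\ve(u)$ (and its tangential analogue on $\G$).

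I would dispatch the two scaled-gradient cases (ii) and (iii) first, as they are the soft ones. For (ii), the inequality $\|\T^\ve_{Y_i}(\phi^\ve)\|_{L^p(\O\times Y_i)}\le|Y|^{1/p}\|\phi^\ve\|_{L^p(\O_i^\ve)}$ together with $\nabla_y\T^\ve_{Y_i}(\phi^\ve)=\ve\,\T^\ve_{Y_i}(\nabla\phi^\ve)$ turns the hypothesis $\|\phi^\ve\|_{L^p(\O_i^\ve)}+\ve\|\nabla\phi^\ve\|_{L^p(\O_i^\ve)}\le C$ into a uniform bound on $\T^\ve_{Y_i}(\phi^\ve)$ in $L^p(\O;W^{1,p}(Y_i))$; reflexivity yields a weak limit $\phi$, and closedness of the weak graph of $\nabla_y$ gives $\ve\,\T^\ve_{Y_i}(\nabla\phi^\ve)=\nabla_y\T^\ve_{Y_i}(\phi^\ve)\rightharpoonup\nabla_y\phi$. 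Assertion (iii) is identical with the boundary unfolding operator, using the surface isometry and $\ve\,\T^\ve_\Gamma(\nabla_\Gamma w^\ve)=\nabla_{\Gamma,y}\T^\ve_\Gamma(w^\ve)$ from \eqref{relat_T} to bound $\T^\ve_\Gamma(w^\ve)$ uniformly in $L^2(\O;H^1(\G))$ and then passing to the weak limit.

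The substance lies in (i), where the gradient is \emph{not} scaled by $\ve$. Here I would first extend $\psi^\ve$ to $\bar\psi^\ve$ on all of $\O$ via Remark~\ref{rem:extension}, so that $\bar\psi^\ve$ is bounded in $W^{1,p}(\O)$ and $\bar\psi^\ve\rightharpoonup\psi$ in $W^{1,p}(\O)$ along a subsequence, identifying $\psi\in W^{1,p}(\O)$. The crucial observation is that now $\nabla_y\T^\ve_{Y_e}(\psi^\ve)=\ve\,\T^\ve_{Y_e}(\nabla\psi^\ve)\to0$ \emph{strongly} in $L^p(\O\times Y_e)$ because of the extra factor $\ve$; hence the bounded unfolded sequence has vanishing $y$-gradient in the limit, so its weak limit is independent of $y$ and, after matching its $Y_e$-mean with the macroscopic limit, equals $\psi(x)$. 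A Poincar\'e--Wirtinger inequality on $Y_e$ controls $\T^\ve_{Y_e}(\psi^\ve)-\langle\T^\ve_{Y_e}(\psi^\ve)\rangle_{Y_e}$ (here $\langle\cdot\rangle_{Y_e}$ denotes the average over $Y_e$) by the vanishing $y$-gradient, which together with Rellich compactness of the macroscopic averages upgrades the convergence to strong in $L^p_{\rm loc}(\O;W^{1,p}(Y_e))$. For the corrector I would pass $\T^\ve_{Y_e}(\nabla\psi^\ve)\rightharpoonup\zeta$ in $L^p(\O\times Y_e)$; averaging over $Y_e$ and using the mean-value relations of \eqref{relat_T} gives $\langle\zeta\rangle_{Y_e}=\nabla\psi$, while testing $\zeta-\nabla\psi$ against $Y$-periodic, ${\rm div}_y$-free fields on $Y_e$ (a de Rham characterisation of gradients on the perforated cell) forces $\zeta-\nabla\psi=\nabla_y\psi_1$ for some $\psi_1\in L^p(\O;W^{1,p}_{\rm per}(Y_e)/\IR)$.

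I expect the main obstacle to be precisely this last step in (i): both the Poincar\'e--Wirtinger estimate and the solenoidal-test-field (de Rham) argument must be carried out on the perforated reference cell $Y_e$, with the inclusion $Y_i$ and its boundary $\G$ removed, so the constants and the admissible class of test fields depend delicately on the microscopic geometry. A secondary technicality, common to all three parts, is controlling the boundary-layer contribution $(\O_e^\ve\setminus\tilde\O^\ve)$ appearing in the unfolding identity for $Y_e$-products in \eqref{relat_T}; this is standard, since its measure is $O(\ve)$, but it must be shown negligible when passing to the limit.
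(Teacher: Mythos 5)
The paper does not prove this theorem at all: it is quoted verbatim from \cite{Cioranescu_2012, Graf_2014}, so there is no internal argument to compare against. Your proposal reproduces the standard proofs from those references: parts (ii) and (iii) are exactly the soft argument (norm bounds from \eqref{relat_T} plus the commutation $\ve\,\T^\ve(\nabla u)=\nabla_y\T^\ve(u)$, reflexivity of $L^p$, weak closedness of $\nabla_y$), and part (i) follows the classical route (extension to $\O$, vanishing of $\nabla_y\T^\ve_{Y_e}(\psi^\ve)=\ve\,\T^\ve_{Y_e}(\nabla\psi^\ve)$ to identify the $y$-independent limit, Poincar\'e--Wirtinger plus Rellich for the strong $L^p_{\rm loc}$ statement, and the Nguetseng--Allaire de Rham argument with periodic solenoidal test fields satisfying $\Phi\cdot\nu=0$ on $\G$ to produce the periodic corrector $\psi_1$). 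The genuinely delicate points --- connectedness and Lipschitz regularity of the perforated cell $Y_e$ for the Poincar\'e and de Rham steps, periodicity of $\psi_1$, and the $O(\ve)$ boundary layer $\O_e^\ve\setminus\tilde\O^\ve$ --- are correctly identified and are handled in the cited works, so I see no gap.
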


\section{Some details on the proof of Lemma~\ref{lem:boundedness}}\label{boundedness_sketch}
In the second equation in \eqref{weak_sol_1}  and  in equations \eqref{weak_sol_2}, integrating by parts with respect to the time variable in the terms involving time derivatives,   applying the periodic unfolding operator and the boundary unfolding operator,   and using the properties of  the unfolding operator, see e.g.\   \eqref{relat_T},  yields for  $x \in \Omega$, $\tau \in (0, T]$
\begin{equation}\label{eq:unfolding_c_i}
\begin{aligned} 
&-  \la \T^\ve(c^\ve_i),  \partial_t \T^\ve(\psi) \ra_{Y_{i,\tau}}  +  \la D_i(y) \nabla_y \T^\ve(c^\ve_i), \nabla_y\T^\ve(\psi) \ra_{Y_{i, \tau}} \\
& \qquad +  \la \T^\ve(c^\ve_{i}(\tau)), \T^\ve(\psi(\tau)) \ra_{Y_{i}}    = \la \T^\ve(c^\ve_{i,0}), \T^\ve(\psi(0)) \ra_{Y_{i}}  
\\ & \qquad  +  \la  F_i(\T^\ve(c^\ve_i)), \T^\ve(\psi) \ra_{Y_{i, \tau}}   + \ \la G_i (\T^\ve(c^\ve_i),  \T^\ve(p_a^\ve)), \T^\ve(\psi) \ra_{\Gamma_\tau},
\end{aligned} 
\end{equation}  
\begin{equation}\label{eq:unfold_boundary}
\begin{aligned} 
  & - \la \T^\ve(r^\ve_f) , \partial_t \T^\ve(\varphi) \ra_{\Gamma_\tau}   +   \la D_{f}  \nabla_{\Gamma, y} \T^\ve( r^\ve_f),  \nabla_{\Gamma, y} \T^\ve(\varphi)  \ra_{\Gamma_\tau} \\
   & +    \la \T^\ve(r^\ve_{f}(\tau)) , \T^\ve(\varphi(\tau)) \ra_{\Gamma}  
  =        \la \T^\ve(r^\ve_{f, 0}) , \T^\ve(\varphi(0)) \ra_{\Gamma}  - d_f  \la \T^\ve(r_f^\ve) ,  \T^\ve (\varphi) \ra_{\Gamma_\tau} \\
 &\qquad    + \la  F_f(\T^\ve(r^\ve_f), \T^\ve(r^\ve_b))-  G_e (\T^\ve(c^\ve_e), \T^\ve(r_f^\ve), \T^\ve(r_b^\ve)),  \T^\ve(\varphi) \ra_{\Gamma_\tau}, 
   \\
&   -\la   \T^\ve(r^\ve_b), \partial_t  \T^\ve(\varphi) \ra_{\Gamma_\tau}  +  \la  D_{b} \nabla_{\Gamma, y}  \T^\ve( r^\ve_b) ,  \nabla_{\Gamma, y} \T^\ve(\varphi)\ra_{\Gamma_\tau} 
  \\ 
  & + \la   \T(r^\ve_{b}(\tau)),   \T^\ve(\varphi(\tau)) \ra_{\Gamma}
  =      \la   \T(r^\ve_{b,0}),   \T^\ve(\varphi(0)) \ra_{\Gamma}
   - d_b  \la \T^\ve(r_b^\ve), \T^\ve(\varphi) \ra_{\Gamma_\tau} \\
 & \quad  +  \la  G_e (\T^\ve(c^\ve_e), \T^\ve(r_f^\ve), \T^\ve(r_b^\ve)) - G_d(\T^\ve(r_b^\ve), \T^\ve(p_d^\ve), \T^\ve(p_a^\ve)) ,  \T^\ve(\varphi) \ra_{\Gamma_\tau}, 
   \end{aligned} 
\end{equation}
and 
\begin{equation}\label{eq:unfold_boundary_2}
\begin{aligned} 
& - \la  \T^\ve(p^\ve_d ), \partial_t \T^\ve(\varphi) \ra_{\Gamma_\tau} +  \la    D_d \nabla_{\Gamma, y}  \T^\ve(p^\ve_d),   \nabla_{\Gamma, y} \T^\ve(\varphi) \ra_{\Gamma_\tau} 
\\
&  +  \la \T^\ve(p^\ve_{d}(\tau)) , \T^\ve(\varphi(\tau)) \ra_{\Gamma}   =  \la \T^\ve(p^\ve_{d, 0}) ,  \T^\ve(\varphi)(0) \ra_{\Gamma}
 - d_d \la \T^\ve(p_d^\ve), \T^\ve(\varphi) \ra_{\Gamma_\tau}\\
& \qquad + \la  F_d(\T^\ve(p^\ve_d)) - G_d(\T^\ve(r_b^\ve), \T^\ve(p_d^\ve), \T^\ve(p_a^\ve)), \T^\ve(\varphi) \ra_{\Gamma_\tau}, 
 \\
 & - \la \T^\ve(p^\ve_a), \partial_t  \T^\ve(\varphi)  \ra_{\Gamma_\tau} +  \la D_a \nabla_{\Gamma, y} \T^\ve( p^\ve_a) ,  \nabla_{\Gamma, y} \T^\ve(\varphi) \ra_{\Gamma_\tau} \\
 &  + \la \T^\ve(p^\ve_{a}(\tau)),   \T^\ve(\varphi (\tau))  \ra_{\Gamma} 
 = \la \T^\ve(p^\ve_{a, 0}),  \T^\ve(\varphi (0))  \ra_{\Gamma} - d_a \la \T^\ve(p_a^\ve), \T^\ve(\varphi) \ra_{\Gamma_\tau}\\
 &\qquad +  \la  G_d(\T^\ve(r_b^\ve), \T^\ve(p_d^\ve), \T^\ve(p_a^\ve))  
- G_i(\T^\ve(p^\ve_a), \T^\ve(c_i^\ve)), \T^\ve(\varphi) \ra_{\Gamma_\tau}, 
 \end{aligned} 
\end{equation}
where  $\psi \in L^2(0,T; H^1(\Omega_i^\ve))$ with $\partial_t \psi \in L^2(\Omega_{i,T}^\ve)$ and $\varphi \in L^2(0, T; H^1(\Gamma^\ve))$ with $\partial_t \varphi \in L^2(\Gamma^\ve_T)$.  Notice that the regularity of solutions of the microscopic problem implies  $c_i ^\ve\in C([0,T];L^2(\Omega^\ve_i))$ and $r_j^\ve, p_s^\ve \in C([0,T]; L^2(\Gamma^\ve))$,  for $j=f,b$ and $s=a,d$.

Considering the sum of equations \eqref{eq:unfolding_c_i}-\eqref{eq:unfold_boundary_2}  with  test functions  $\psi(t,x) = 1$ in $\Omega_{i,T}^\ve$ and $\varphi(t,x) = 1$ on $\Gamma^\ve_T$, respectively,  and  using the nonnegativity of solutions,  the structure of the reaction terms,  and  the assumptions on the initial data yields  
\begin{equation}\label{estim_L1_Gamma}
\begin{aligned}
\| \T^\ve(r^\ve_f)(\tau,x,\cdot)\|_{L^1(\Gamma)} +\| \T^\ve(r^\ve_b) (\tau,x,\cdot)\|_{L^1(\Gamma)}+
\| \T^\ve(p^\ve_d) (\tau,x,\cdot) \|_{L^1(\Gamma)} \qquad \quad \\
+ 2\|\T^\ve(p^\ve_a) (\tau,x,\cdot) \|_{L^1(\Gamma)}  +2 \|\T^\ve(c_i^\ve)(\tau, x, \cdot)\|_{L^1(Y_i)} \leq  
   C_1\| \T^\ve(r^\ve_{f,0}) (x,\cdot)\|_{L^1(\Gamma)} \\ + \|\T^\ve(r^\ve_{b, 0}) (x,\cdot) \|_{L^1(\Gamma)}
    +
   \| \T^\ve(p^\ve_{d, 0}) (x,\cdot) \|_{L^1(\Gamma)} 
+2 \|\T^\ve(p^\ve_{a, 0}) (x,\cdot)\|_{L^1(\Gamma)}   \\
+ C_2 \|\T^\ve(c^\ve_{i,0})(x,\cdot) \|_{L^1(Y_i)} + C_3  
   \leq C, 
 \end{aligned} 
\end{equation}
for $\tau \in (0, T]$ and a.a.\ $x\in \Omega$. 

The estimates in \eqref{estim_pdaci_L2}  are obtained by considering  $\T^\ve(c_i^\ve)$ as  a test function in \eqref{eq:unfolding_c_i},    $\T^\ve(r_f^\ve)$ as a test function in the  equation for $\T^\ve(r_f^\ve)$ and 
$\T^\ve(r_b^\ve)  + \T^\ve(r_f^\ve)$ as a  test function  in the sum of equations for $\T^\ve(r_b^\ve)$ and $\T^\ve(r_f^\ve)$ in \eqref{eq:unfold_boundary}, 
$\T^\ve (p_d^\ve)$  as a test function in the  equation for $\T^\ve(p_d^\ve)$ and  $\T^\ve(p_d^\ve)  + \T^\ve(p_a^\ve)$ as a test function  in the sum of equations for $\T^\ve(p_d^\ve)$ and $\T^\ve(p_a^\ve)$ in \eqref{eq:unfold_boundary_2}, and by using the nonnegativity of solutions of the microscopic problem. To ensure that the time derivative  is well-defined, we consider a standard  approximation, using the Steklov average,  of $r_l^\ve$,  $p_s^\ve$, $c_i^\ve$, for $l=f,b$ and $s=d,a$,  i.e.\
$$
v^\zeta(t,x) = \frac 1 {\zeta} \int_{t-\zeta}^t  \frac 1 {\zeta} \int_s^{s+\zeta}  v(\sigma,x) d\sigma \kappa(s)  ds, 
$$
with $\kappa(s) =1$ for $s\in (0, T-\zeta)$ and $\kappa(s) = 0$ for $s \in [-\zeta, 0] \cup [T-\zeta, T]$,   and then take $\zeta \to 0$, see e.g.\ \cite{Ladyzhenskaya} for more details. 

To show boundedness of solutions of the microscopic problem we first consider   $|\T^\ve(r_f^\ve)|^{p-1}$ for $p\geq 4$ as a test function in the first equation in \eqref{eq:unfold_boundary} and using the nonnegativity of $c^\ve_e$ and $r_f^\ve$ and assumptions on the nonlinear function $F_f$  we obtain 
 \begin{equation}\label{estim_rf_Lp}
\begin{aligned} 
 \|\T^\ve (r_f^\ve) (\tau) \|^p_{L^p(\Gamma)} + 4 \frac {p-1} p \| \nabla_{\Gamma, y}| \T^\ve (r_f^\ve)|^{\frac p 2}\|^2_{L^2(\Gamma_\tau)} \qquad \qquad \\ \leq   C_1p\left[1+    \|\T^\ve (r_f^\ve)\|^p_{L^p(\Gamma_\tau)}\right] + C_2  p \la \T^\ve (r_b^\ve), | \T^\ve (r_f^\ve)|^{p-1} \ra_{\Gamma_\tau} .
 \end{aligned} 
\end{equation} 
  Applying the H\"older inequality and inequalities in \eqref{estim_GN_1}, the last term in \eqref{estim_rf_Lp} is estimated in the following way 
 \begin{equation}\label{estim_rf_p_1}
\begin{aligned} 
& \la \T^\ve (r_b^\ve), | \T^\ve (r_f^\ve)|^{p-1} \ra_{\Gamma_\tau}  \leq  \frac 1 p \|\T^\ve (r_b^\ve)\|^p_{L^4(\Gamma_\tau)}\\
& \quad +C_1\frac{p-1} p \|\nabla_{\Gamma, y}  | \T^\ve (r_f^\ve)|^{\frac p 2}\|_{L^2(\Gamma_\tau)} 
  \||\T^\ve (r_f^\ve)|^{\frac p 2}\|^4_{L^4(0, \tau; L^2(\Gamma) )}
 \\
& \leq 
  \frac 1 p \|\T^\ve (r_b^\ve)\|^p_{L^4(\Gamma_\tau)}  + 
 C_2\frac{p-1} p  \|\nabla_{\Gamma, y}  | \T^\ve (r_f^\ve)|^{\frac p 2}\|_{L^2(\Gamma_\tau)} \times \\
& \quad \;\;   \times \left[\int_0^\tau \|  \nabla_{\Gamma, y} |\T^\ve (r_f^\ve)|^{\frac p 2}\|^2_{L^2(\Gamma)}  \| \T^\ve (r_f^\ve)^{\frac p 2}\|_{L^1(\Gamma)}^2   \, dt \right]^{\frac 1 4}  
 \leq  \frac 1 p \|\T^\ve (r_b^\ve)\|^p_{L^4(\Gamma_\tau)}\\
& +  C_2 \frac{p-1} p \|\nabla_{\Gamma, y}   | \T^\ve (r_f^\ve)|^{\frac p 2}\|_{L^2(\Gamma_\tau)}^{\frac 3 2} 
\sup\limits_{(0,\tau)}\| |\T^\ve (r_f^\ve)|^{\frac p 2}\|_{L^1(\Gamma)}^{\frac 1 2}   \leq \frac 1 p \|\T^\ve (r_b^\ve)\|^p_{L^4(\Gamma_\tau)}
 \\
& + \delta  \frac {p-1} {p^2} \|\nabla_{\Gamma, y}   | \T^\ve (r_f^\ve)|^{\frac p 2}\|_{L^2(\Gamma_\tau)}^2 + C_\delta p^3 \sup\limits_{(0,\tau)}\| \T^\ve (r_f^\ve)^{\frac p 2}\|_{L^1(\Gamma)}^2 , 
   \end{aligned}
 \end{equation}
for $\tau \in (0, T]$ and $x\in \Omega$.  Using the Gagliardo-Nirenberg inequality, see \eqref{estim_GN_1}, we also obtain 
 \begin{equation} \label{estim:bound_Lp_rf}
  \|\T^\ve (r_f^\ve) \|^p_{L^p(\Gamma_\tau)} \leq \delta \frac{p-1}{p^2} \|\nabla_{\Gamma, y} |\T^\ve (r_f^\ve)|^{\frac p 2} \|^2_{L^2(\Gamma_\tau)} 
 + C_\delta \,  p  \sup\limits_{(0,\tau)} \| |\T^\ve (r_f^\ve)|^{\frac p2}\|_{L^1(\Gamma)}^{2},  
 \end{equation} 
for $\tau \in (0, T]$ and $x\in \Omega$. 
Then using estimates \eqref{estim_rf_p_1} and \eqref{estim:bound_Lp_rf} in \eqref{estim_rf_Lp} yields inequality \eqref{rf_bound_1}.

To show boundedness of $c^\ve_e$ 
we consider  $|c^\ve_e|^{p-1}$, for $p\geq 4$,  as a test function in the first equation in \eqref{weak_sol_1}  and, using the assumptions on $F_e$ and the nonnegativity of $r_f^\ve$ and $c^\ve_e$ we obtain
 \begin{equation}\label{estim_c_p_2}
\begin{aligned} 
 \| c_e^\ve (\tau) \|^p_{L^p(\Omega_e^\ve)} + 4 \frac {p-1} p \| \nabla_{\Gamma, y}| c_d^\ve|^{\frac p 2}\|^2_{L^2(\Omega^\ve_{e,\tau})} \leq   C_1p\left[1+    \|c_e^\ve\|^p_{L^p(\Omega^\ve_{e,\tau})}\right] \\ + C_2  p\,  \ve \,  \la r_b^\ve, | c_e^\ve|^{p-1} \ra_{\Gamma^\ve_\tau} .
 \end{aligned} 
\end{equation} 
The last term  in  \eqref{estim_c_p_2} can be estimated in the following way 
\begin{equation}\label{estim_c_rb}
\begin{aligned} 
& \ve \, |Y|\, \la r_b^\ve, | c_e^\ve|^{p-1} \ra_{\Gamma^\ve_\tau} =  \la \T^\ve (r_b^\ve), |\T^\ve (c^\ve_e)|^{p-1}\ra_{\Omega_\tau \times \Gamma} \\
&\hspace{ 2 cm }    \leq \int_{\Omega_\tau}  \left[\int_\Gamma  |\T^\ve (c^\ve_e)|^{\frac{4(p-1)}3} d\sigma_y \right]^{\frac 3 4}   \|\T^\ve(r^\ve_b)\|_{L^4(\Gamma)} dx dt 
\\
& \hspace{ 0.6 cm }  \leq    \frac {p-1} p \int_{\Omega_\tau } \left[\int_\Gamma \big( |\T^\ve(c^\ve_e)|^{\frac p 2} \big)^{\frac 83} d\sigma_y \right]^{\frac 3 4} dx dt + 
\frac 1 p \int_{\Omega_\tau }   \|\T^\ve(r^\ve_b)\|^p_{L^4(\Gamma)} dx dt.  
\end{aligned} 
\end{equation}
Applying   the trace inequality \eqref{trace_gen} 
 to $|\mathcal T^\ve(c_e^\ve)|^{\frac p 2}$ and  using   the properties  of the unfolding operator $\T^\ve$, see e.g.\ \eqref{relat_T},     the first term on the right-hand side of  \eqref{estim_c_rb} is estimated as 
$$
\begin{aligned} 
& \||\T^\ve(c_e^\ve)|^{\frac p 2}\|^2_{L^2(\Omega_\tau; L^{8/3}(\Gamma))} 
 \leq \frac {\delta} {p}  \|\nabla_y |\T^\ve(c_e^\ve)|^{\frac p 2}\|^2_{L^2(\Omega_\tau; L^2(Y_e))}\\
&\quad + C_\delta \, p^3 \, \||\T^\ve(c_e^\ve)|^{\frac p 2} \|^2_{L^2(\Omega_\tau; L^2(Y_e))}  \leq  |Y| \Big[ C_\delta\,  p^3\,   \|c_e^\ve\|^p_{L^p(\Omega^\ve_{e,\tau})}+ \frac \delta p   \ve^2\,  \|\nabla |c_e^\ve|^{\frac  p 2} \|^2_{L^2(\Omega^\ve_{e,\tau})}\Big] . 
\end{aligned}
$$
To   estimate  $ \|\T^\ve(r^\ve_b)\|_{L^p(\Omega_T;L^{4}(\Gamma))}$ we 
consider   $|\T^\ve (r_b^\ve)|^3$  as a test functions in the second equation in  \eqref{eq:unfold_boundary} and obtain 
\begin{equation}\label{estim_L4_rb}
\begin{aligned} 
\| \T^\ve(r_b^\ve)(\tau) \|^4_{L^4(\Gamma)} + \|\nabla_{\Gamma, y} |\T^\ve(r_b^\ve)|^2 \|^2_{L^2(\Gamma_\tau)}
  \leq  \,  \| \T^\ve(r_{b0}^\ve) \|^4_{L^4(\Gamma_\tau)}  \qquad \qquad 
   \\   + C_1 \la  \T^\ve(p_a^\ve), |\T^\ve(r_b^\ve)|^3 \ra_{\Gamma_\tau}  
   + C_2 \la \T^\ve(r_f^\ve) \T^\ve(c_e^\ve), |\T^\ve(r_b^\ve)|^3 \ra_{\Gamma_\tau} , 
\end{aligned} 
\end{equation}
for $x\in \Omega$ and $\tau \in (0, T]$. Applying  the Gagliardo-Nirenberg inequality 
$$
\| v\|_{L^3(\Gamma)} \leq  C_1\|\nabla_{\Gamma, y} v \|_{L^2(\Gamma)}^{2/3} \|v\|^{1/3}_{L^1(\Gamma)} 
$$
to $|\T^\ve(r_b^\ve)|^2$  and using the estimates in \eqref{estim_rfb} yield
$$
\begin{aligned} 
& | \la \T^\ve(p_a^\ve), |\T^\ve(r_b^\ve)|^3 \ra_{\Gamma_\tau}|
\leq \delta  \|\nabla_{\Gamma, y} |\T^\ve(r_b^\ve) |^2 \|^2_{L^2(\Gamma_\tau)} \sup\limits_{(0,\tau)}\|\T^\ve (r_b^\ve)  \|_{L^2(\Gamma)} 
\\ 
&+ 
\| \T^\ve (r_b^\ve)  \|^{6}_{L^6(0, \tau, L^2(\Gamma))}
+ C_\delta  \| \T^\ve(p_a^\ve)\|^2_{L^2(\Gamma_\tau)}  \leq C_1\delta \|\nabla_{\Gamma, y} |\T^\ve(r_b^\ve) |^2 \|^2_{L^2(\Gamma_\tau)}  + C_2, 
\end{aligned} 
$$
for $\tau \in (0, T]$ and $x\in \Omega$. The boundedness of $\T^\ve(r_f^\ve)$, see \eqref{rf_bound},  ensures 
$$
\begin{aligned}
\left |\la  \T^\ve(r_f^\ve) \T^\ve(c_e^\ve), |\T^\ve(r_b^\ve)|^3 \ra_{\Gamma_\tau} \right | & \, \leq 
C_1 \|\T^\ve(c_e^\ve)\|_{L^p(\Gamma_\tau)} \|\T^\ve(r_b^\ve)\|^3_{L^{\frac{3p}{p-1}}(\Gamma_\tau)}  \\
 & \, \leq C_2 \left[ \|\T^\ve(c_e^\ve)\|^4_{L^p(\Gamma_\tau)} + \|\T^\ve(r_b^\ve)\|^4_{L^4(\Gamma_\tau)}\right], 
\end{aligned}
$$
for  $p \geq 4$. 
Then  combining the estimates above and using Gronwall's inequality   in  \eqref{estim_L4_rb}   implies 
\begin{equation}\label{estim_rbL4}
\begin{aligned} 
\| \T^\ve(r_b^\ve)(\tau) \|_{L^4(\Gamma)} + \|\nabla_{\Gamma, y} |\T^\ve(r_b^\ve)|^2 \|^{\frac 12}_{L^2(\Gamma_\tau)}
 \leq  C_1[1+ \|\T^\ve(c_e^\ve)\|_{L^p(\Gamma_\tau)}],  
\end{aligned} 
\end{equation}
for $\tau \in (0, T]$,  $x\in \Omega$, and $p \geq 4$.
Hence using \eqref{estim_rbL4} in \eqref{estim_c_rb} and applying the relations between the original and unfolded sequences, see \eqref{relat_T}, estimate \eqref{estim_c_p_2}  yields
\begin{equation}\label{estim-c_e-p} 
\begin{aligned} 
\||c^\ve_e(\tau) |^{\frac p2}\|^2_{L^2(\Omega_e^\ve)} +4 \frac{p-1}{p} \|\nabla |c^\ve_e|^{\frac p 2}\|^2_{L^2(\Omega_{e, \tau}^\ve)} 
\leq  \delta_1 \frac {p-1}{ p}   \|\nabla |c^\ve_e|^{\frac p 2} \|^{2}_{L^2(\Omega^\ve_{e,\tau})} \\
+ C_\delta p^3(1+   \| c^\ve_e\|_{L^p(\Omega_{e, \tau}^\ve)}^{p}) +
  C_1  (1+ \ve \| c^\ve_e\|^p_{L^p(\Gamma^\ve_\tau)}). 
  \end{aligned} 
\end{equation} 
 Notice that the Gagliardo-Nirenberg inequality and the properties of an extension  $|\bar c^\ve_e|^{\frac p2}$ of $|c^\ve_e|^{\frac p 2}$ from $\Omega_e^\ve$ into $\Omega$, see \eqref{estim:extension}, ensures 
  \begin{equation}\label{estim:c_e_Lp_L1}
  \begin{aligned} 
   \| c^\ve_e\|_{L^p(\Omega_{e, \tau}^\ve)}^{p}  & \leq  \| |\bar c^\ve_e|^{\frac p 2}\|_{L^2(\Omega_{\tau})}^{2}  
  \leq \mu_1 \int_0^\tau \| |\bar c^\ve_e|^{\frac p 2}\|^{4/5}_{L^1(\Omega)} \|\nabla  |\bar c^\ve_e|^{\frac p 2}\|^{6/5}_{L^2(\Omega)} dt \\
&   \leq \frac {\delta_1}{p^3}  \|\nabla  |\bar c^\ve_e|^{\frac p 2}\|^2_{L^2(\Omega_\tau)} 
    + C_{\delta,1}p^{\frac 92}  \sup_{(0,\tau)}\| |\bar c^\ve_e|^{\frac p 2}\|^2_{L^1(\Omega)}  \\
  &\leq \frac \delta {p^3} \|\nabla  |c^\ve_e|^{\frac p 2}\|^2_{L^2(\Omega_{e,\tau}^\ve)} + C_\delta p^{\frac 92}\sup\limits_{(0,\tau)}\| |c^\ve_e|^{\frac p 2}\|^2_{L^1(\Omega_{e}^\ve)}. 
  \end{aligned} 
  \end{equation}
 Then applying  trace inequality \eqref{ineq:trace}  in the last term in \eqref{estim-c_e-p} and using  the estimate \eqref{estim:c_e_Lp_L1}  yield \eqref{estim_cp_2}.

}

\end{document}